\renewcommand \L{\mathrm{L}}
\newcommand \Om{\Omega}
\newcommand \eps{\epsilon}
\newcommand \1{\mathrm{1}}
\newcommand \Z{\mathbb{Z}}
\newcommand \N{\mathbb{N}}
\newcommand \B{\operatorname{B}}
\newcommand \UB{\operatorname{UB}}
\newcommand \R{\mathbb{R}}
\renewcommand \S{\mathbb{S}}
\renewcommand{\P}{\operatorname{P}}
\newcommand{\sm}{\setminus}
\newcommand{\lra}{\longrightarrow}
\newcommand{\rau}{\rightharpoonup}
\newcommand{\nif}{{n \rightarrow +\infty}}
\renewcommand{\div}{\operatorname{div}}
\renewcommand{\H}{\operatorname{H}}
\newcommand{\Span}{\operatorname{span}}
\newtheorem{theorem}{Theorem}
\newtheorem{remark} [theorem]{Remark}
\newtheorem{proposition} [theorem]{Proposition}
\newtheorem{conjecture}{Conjecture}
\theoremstyle{definition}
\begin{document}

\title[]{Numerical optimization of Neumann eigenvalues of domains in the sphere}

\author[E. Martinet]
{Eloi Martinet}
\address[Eloi Martinet]{Univ. Savoie Mont Blanc, CNRS, LAMA \\
73000 Chamb\'ery, France}
\email[E. Martinet]{eloi.martinet@univ-smb.fr}

\subjclass[2010]{}
\keywords{Neumann  eigenvalues, Laplace-Beltrami, Shape optimization, Sphere, Space of constant curvature, Torus}

\date{\today}

\begin{abstract}
This paper deals with the numerical optimization of the first three eigenvalues of the Laplace-Beltrami operator of domain in the Euclidean sphere in $\R^3$ with Neumann boundary conditions. We address two approaches : the first one is a generalization of the initial problem leading to a density method and the other one is a shape optimization procedure via the level-set method. The original goal of those method was to investigate the conjecture according to which the geodesic ball were optimal for the first non-trivial eigenvalue under certain conditions. These computations give some strong insight on the optimal shapes of those eigenvalue problems and show a rich variety of shapes regarding the proportion of the surface area of the sphere occupied by the domain. In a last part, the same algorithms are used to carry the same survey on a torus.
\end{abstract}

\maketitle

\section{Introduction}

Let $n\geq 1$. The problem we consider in the first part of this paper involves the spectrum of the Laplace operator on domain in the unit sphere $\S^n = \{ x \in \R^{n+1} : \|x\| = 1 \}$. Let $\Omega \subset \S^n$ be an open, bounded and Lipschitz set. The spectral theorem assures that the problem
$$
\begin{cases}
-\Delta u = \mu_k(\Om) u \mbox { in } \Om,\\
\frac{\partial u}{\partial \nu} = 0  \mbox { on } \partial \Om,
\end{cases}
$$
with $u \in H^1(\Om)\sm \{0\}$ has a sequence of eigenvalues
$$
0 = \mu_0(\Om) \leq \mu_1(\Om) \leq \mu_2(\Om) \leq ... \to +\infty.
$$
The corresponding eigenfunctions satisfy the well-known variational formula
\begin{equation}\label{domain_var}
    \mu_k(\Om) = \min_{V\in{\mathcal V}_{k+1}} \max_{u \in V\sm \{0\}} \frac{\int_\Om |\nabla u|^2}{\int_\Om u^2},
\end{equation}
where ${\mathcal V}_k$ is the family of subspaces of dimension $k$ in $H^1(\Om)$. We are interested in the following problem : for $m > 0$ and $k \in \N$, solve
\begin{equation}\label{domain_problem}
\sup \{ \mu_k(\Om) : \Om \subset \S^n, | \Om | = m, \Om \mbox{ bounded, open and Lipschitz} \}.
\end{equation}
The aim of this paper will be to study this optimization problem from a numerical point of view.

While the numerical shape optimization of Neumann eigenvalues of domains in the Euclidian space have drawn a lot of attention in the past years (see for instance \cite{antunes2012numerical} \cite{antunes_numerical_2017} \cite{antunes_oudet} \cite{BMO_2022}), the litterature on the optimization of those eigenvalues for domains in curved spaces is sparse. The present work will address this problem by considering the optimization of several Neumann eigenvalues of domains in the sphere $\S^n$ as well as on a torus.

Old and recent theoretical works have been conducted to prove the optimality of the spherical cap for domain in the sphere for the first Neumann eigenvalue, either by assuming simple connectedness of the optimal domain \cite{bandle_isoperimetric_1972} \cite{langford2022maximizers} or, following the seminal work of Weinberger \cite{weinberger_isoperimetric_1956}, using the so-called "mass transplantation technique". However all these results require some restriction on the domain : for instance, to lie on the hemisphere \cite{ashbaugh_sharp_1995} or to lie outside of some spherical cap \cite{bucur_sharp_2022}.

In this work, we will provide some numerical evidence of properties of the optimal domain in the sphere. Especially, we will see that some density-based method strongly suggest that the problem of the optimization of the first Neumann eigenvalue on domains can not be tackled by mass transplantation arguments for domains and densities of masses large enough. Moreover, we will see how the numerical study of the second non-trivial Neumann eigenvalue suggests that the optimal shape is two disjoint geodesic balls \cite{bucur_sharp_2022}. We will also numerically witness the rich variety of optimal shapes for the third non-trivial eigenvalue. In a last part we will consider the same questions on a torus.

As it has already been said, one of the numerical method will rely on some relaxation of the original problem by extending the relation \eqref{domain_var} to the class of densities $\rho \in \L^\infty(\S^n, [0,1])$ in the following way
\begin{equation}\label{density_var}
 \mu _k(\rho) := \inf_{V\in{\mathcal V}_{k+1}} \max_{u \in V \sm \{0\}} \frac{\int_{\S^n} \rho|\nabla u|^2}{\int_{\S^n} \rho u^2},\end{equation}
where ${\mathcal V}_{k+1}$ is the family of subspaces of dimension $k+1$ in
\begin{equation}
\{u\cdot 1_{\{\rho (x)>0\}}: u \in C^\infty_c (\S^n)\}.
\end{equation}

This relaxation, which has already been extensively used in \cite{BMO_2022}, can also be found in greater generality in \cite{colbois_spectrum_2019}.

The original problem will then be replaced by

\begin{equation} \label{density_problem}
\sup \left\{ \mu_k(\rho) :  \rho : \S^n \rightarrow [0,1], \int_{\S^n}\rho=m\right\}.
\end{equation}
which well-posedness is established below.

This formulation allows to perform some classical optimization methods such as gradient descent over the variable $\rho$ instead of considering difficult shape optimization problem involving changes of topology. However, the density method is strictly more general than the problem \eqref{domain_problem} in the sense that optimal density may not correspond to characteristic functions of domains. This led to an implementation of a shape optimization method, namely the level set method, which solves directly the problem \eqref{domain_problem} with possible changes in topology.

In the next sections, we first see the theoretical aspects brought by the relaxation \eqref{density_var}. Then we discuss the practical implementation of the two methods cited above and provide numerical results related to the first three eigenvalues. We also address some possible consequences these computations have at a theoretical level.

\section{Existence and approximation of the optimal density}

We start by setting a theoretical framework which makes problem \eqref{density_problem} well-posed. The results follows using the same ideas as in \cite{BMO_2022}. For this reason, we shall not enter too much in the details.

A natural question that directly arises is the one of the existence of an optimal density. In the Euclidian case $\R^n$, we had to rely on some concentration-compactness result to obtain the existence of some \textit{collection} of densities. Here the fact that the sphere has finite measure allows us to give a complete existence result easily :

\begin{theorem}[Existence of an optimal density] \label{th:existence}
  Let $0 \leq m \leq |\S^n|$. Then problem (\ref{density_problem}) has a solution.
\end{theorem}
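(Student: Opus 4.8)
The plan is to use the direct method of the calculus of variations: take a maximizing sequence of densities, extract a weakly-$*$ convergent subsequence in $\L^\infty(\S^n,[0,1])$, and show that the functional $\rho \mapsto \mu_k(\rho)$ is upper semicontinuous along it (or at least that the limit density is admissible and achieves the supremum). Concretely, let $S := \sup\{\mu_k(\rho) : \rho : \S^n \to [0,1], \int_{\S^n}\rho = m\}$, which is finite (it is bounded above, e.g., by $\mu_k$ of the constant density $m/|\S^n|$ times a trivial factor, or simply because the Rayleigh quotients are controlled). Pick $\rho_j$ with $\int_{\S^n}\rho_j = m$ and $\mu_k(\rho_j) \to S$. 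Since $0 \le \rho_j \le 1$ on the finite-measure space $\S^n$, Banach--Alaoglu gives a subsequence (not relabelled) with $\rho_j \rightharpoonup \rho_\infty$ weakly-$*$ in $\L^\infty$, and $\rho_\infty : \S^n \to [0,1]$ with $\int_{\S^n}\rho_\infty = m$ by testing the weak-$*$ convergence against the constant function $1 \in \L^1(\S^n)$. Thus $\rho_\infty$ is admissible, and it remains to prove $\mu_k(\rho_\infty) \ge S$, i.e. $\mu_k(\rho_\infty) \ge \limsup_j \mu_k(\rho_j)$.

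The heart of the argument is this upper semicontinuity. I would follow the strategy of \cite{BMO_2022}: fix $\varepsilon > 0$ and, using the variational characterization \eqref{density_var} for $\rho_\infty$, note it suffices to exhibit, for each $j$ large, a test subspace $V_j \in \mathcal{V}_{k+1}$ built from smooth functions whose Rayleigh quotients with weight $\rho_j$ are all $\ge \mu_k(\rho_\infty) - \varepsilon$ — no, more carefully: one shows that a \emph{near-optimal} $(k+1)$-dimensional subspace $V$ of smooth functions for the problem defining $\mu_k(\rho_\infty)$ (achieving $\max_{u \in V\setminus\{0\}} \frac{\int \rho_\infty |\nabla u|^2}{\int \rho_\infty u^2} \le \mu_k(\rho_\infty)+\varepsilon$, possible since the infimum in \eqref{density_var} may be realized up to $\varepsilon$ by smooth test functions) is \emph{also} nearly admissible and nearly optimal for $\rho_j$. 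For a fixed finite-dimensional space $V$ spanned by finitely many fixed functions $u_1,\dots,u_{k+1} \in C^\infty(\S^n)$, the quantities $\int_{\S^n}\rho_j |\nabla u_i|^2$, $\int_{\S^n}\rho_j\, u_i u_\ell$ converge as $j \to \infty$ to the corresponding integrals with $\rho_\infty$, because $|\nabla u_i|^2$ and $u_i u_\ell$ are fixed $\L^1$ functions and $\rho_j \rightharpoonup \rho_\infty$ weakly-$*$. Hence the two quadratic forms on the fixed finite-dimensional space $V$ converge entrywise, so $\max_{u\in V\setminus\{0\}}$ of the generalized Rayleigh quotient converges, giving $\mu_k(\rho_j) \le \max_{u\in V\setminus\{0\}}\frac{\int\rho_j|\nabla u|^2}{\int\rho_j u^2} \to \max_{u\in V\setminus\{0\}}\frac{\int\rho_\infty|\nabla u|^2}{\int\rho_\infty u^2} \le \mu_k(\rho_\infty)+\varepsilon$. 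Taking $j\to\infty$ then $\varepsilon\to 0$ yields $S = \limsup_j \mu_k(\rho_j) \le \mu_k(\rho_\infty)$, so $\mu_k(\rho_\infty) = S$ and $\rho_\infty$ is optimal.

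The main obstacle — and the point requiring care rather than routine computation — is the interaction between the constraint $\{u \cdot 1_{\{\rho>0\}}\}$ appearing in the definition of $\mathcal{V}_{k+1}$ in \eqref{density_var} and the weak-$*$ limit: the support $\{\rho_\infty > 0\}$ can be strictly smaller than a limit of $\{\rho_j > 0\}$, so one must be sure that restricting test functions to the support of $\rho_\infty$ does not decrease the relevant quantities, and conversely that test functions good for $\rho_\infty$ remain usable for $\rho_j$. In fact this is why the argument is naturally phrased as an \emph{upper} bound on $\mu_k(\rho_j)$ via a \emph{fixed} near-optimal space for $\rho_\infty$ (that direction only needs the weak-$*$ convergence of integrals of fixed $\L^1$ functions, which is immediate), rather than trying to pass a sequence of optimal subspaces for $\rho_j$ to a limit. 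One also needs the standard fact that smooth functions suffice in \eqref{density_var} up to $\varepsilon$, and a nondegeneracy check that the denominators $\int_{\S^n}\rho_\infty u^2$ do not vanish on the chosen $V$ (arrange $V$ so that, e.g., it contains functions supported where $\rho_\infty$ is bounded below on a set of positive measure), exactly as in \cite{BMO_2022}; since the paper explicitly says it will not dwell on details, I would state these steps and refer to \cite{BMO_2022} for the verifications.
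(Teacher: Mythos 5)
Your proposal is correct and follows essentially the same route as the paper: the direct method combined with upper semicontinuity of $\rho \mapsto \mu_k(\rho)$ under weak-$*$ convergence, proved by testing $\mu_k(\rho_j)$ against a fixed near-optimal $(k+1)$-dimensional space of smooth functions for the limit density and passing to the limit in the two quadratic forms (the paper carries out the convergence of the maximum by normalizing the coefficients of a maximizing sequence, which is the same continuity fact you invoke, and it likewise notes the dimension/nondegeneracy point for large $j$). Your parenthetical justification that the supremum is finite (``bounded above by $\mu_k$ of the constant density'') is not right as stated, but it is also not needed for the argument, so this is harmless.
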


\begin{proof} We actually prove the upper-semicontinuity of the eigenvalues with respect to the weak-* convergence. Let $k \in \N$ and $\rho, (\rho_n)_{n \in \N} \in \L^1(\S^n, [0,1])$ be functions such that $\rho_n \rau \rho$. Let $\eps > 0$ and $V = \Span\{v_0 \1_{\{\rho>0\}},...,v_k \1_{\{\rho>0\}} \}$ be such that
\[
\mu_k(\rho) \geq \max_{u\in V\setminus \{0\}} \frac{\int_{\S^n} \rho|\nabla u|^2 }{\int_{\S^n} \rho u^2} - \eps.
\]

Let us consider $V_n = < v_0 \1_{\{\rho_n>0\}},...,v_k \1_{\{\rho_n>0\}}>$ and $u_n = \sum_{i=0}^k \alpha_i^n v_i$ a maximizing sequence in
\[
\max_{u\in V_n\setminus \{0\}} \frac{\int_{\S^n} \rho_n |\nabla u|^2}{\int_{\S^n} \rho_n u^2}.
\]

For $n$ large enough, $V_n$ is of dimension $k+1$ hence

\[
\mu_k(\rho_n) \leq \frac{\int_{\S^n} \rho_n|\nabla u_n|^2}{\int_{\S^n} \rho_n u_n^2}.
\]

Note that we can suppose by homogeneity that $\sum_{i=0}^k (\alpha_i^n)^2=1$ for all $n$. Up to a subsequence, we get that $\alpha_i^n \rightarrow \alpha_i \in \R$ for all $i$. By putting $\tilde v = \sum_{i=0}^k \alpha_i v_i$ we get that
\[
\int_{\S^n} \rho_n |\nabla u_n|^2 \rightarrow \int_{\S^n} \rho |\nabla \tilde v|^2
\]
and
\[
\int_{\S^n} \rho_n u_n^2 \rightarrow \int_{\S^n} \rho \tilde v^2.
\]
Thus
\[ \mu_k(\rho) \geq \limsup_{\nif} \mu_k(\rho_n) - \eps.\]
This relation being valid for all $\eps$, we get
\[ \mu_k(\rho) \geq \limsup_{\nif} \mu_k(\rho_n).\]

Now let $(\rho_n)_{n\in\N}$ be some maximizing sequence of the problem (\ref{density_problem}). There exists a $\rho \in \L^1(\S^n, [0,1])$ such that up to a subsequence, $\rho_n \rau \rho$ weakly. By upper-semicontinuity, we get that $\mu_k(\rho) \geq \limsup_{\nif} \mu_k(\rho_n)$ and the fact that $1 \in \L^1(\S^n, \R)$ ensure that the condition $\int_{\S^n} \rho_n = m$ is satisfied at the limit.
\end{proof}

From a numerical point of view, computing the generalized eigenvalue \textit{via} finite element method is not possible in general due to the potential vanishing of $\rho$ on some non-negligible parts of $\S^n$. It is possible to approximate our generalized eigenvalues by well-defined ones of non-zero densities :

\begin{theorem}[Approximation]
  \label{th:approximation}
  Let $\rho \in \L^1(\S^n,[0,1])$ , $\int_{\S^n} \rho = m > 0$. We introduce the following quantity :
  \[
  \mu_k^\eps(\rho) := \min_{V\in{\mathcal V}_{k+1}} \max_{u \in V \sm \{0\}} \frac{\int_{\S^n} (\rho+ \eps) |\nabla u|^2}{\int_{\S^n} (\rho+\eps^2) u^2}
  \]
  where ${\mathcal V}_{k+1}$ is the family of subspace of dimension $k+1$ in $\H^1(\S^n)$.

  Then :
  \[
  \mu_k^\eps(\rho) \xrightarrow[\eps \to 0]{} \mu_k(\rho).
  \]
\end{theorem}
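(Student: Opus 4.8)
The plan is to prove the convergence $\mu_k^\eps(\rho) \to \mu_k(\rho)$ by establishing a $\limsup$ inequality and a $\liminf$ inequality separately.

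For the $\limsup$ bound, i.e. $\limsup_{\eps\to 0} \mu_k^\eps(\rho) \leq \mu_k(\rho)$, I would fix $\delta > 0$ and pick an admissible $(k+1)$-dimensional subspace $V = \Span\{v_0\1_{\{\rho>0\}},\dots,v_k\1_{\{\rho>0\}}\}$ with $v_i \in C^\infty_c(\S^n)$ that is nearly optimal in the variational formula \eqref{density_var} for $\mu_k(\rho)$. The functions $v_i\1_{\{\rho>0\}}$ are not in $\H^1(\S^n)$, so they cannot be used directly as a test space for $\mu_k^\eps$; instead I would use $\tilde V = \Span\{v_0,\dots,v_k\}$ (the untruncated smooth functions) as a competitor. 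The key estimate is that for $u = \sum_i \alpha_i v_i$ one has
\[
\int_{\S^n} (\rho+\eps)|\nabla u|^2 = \int_{\S^n} \rho|\nabla u|^2 + \eps\int_{\S^n}|\nabla u|^2,
\]
and similarly $\int_{\S^n}(\rho+\eps^2)u^2 = \int_{\S^n}\rho u^2 + \eps^2\int_{\S^n}u^2$. On the unit sphere $\int_{\S^n}|\nabla u|^2$ and $\int_{\S^n}u^2$ are bounded by a constant times $\sum_i\alpha_i^2$ (uniformly over the finite-dimensional space after normalizing $\sum_i\alpha_i^2 = 1$), while — and this is the crucial point — $\int_{\S^n}\rho u^2 = \int_{\{\rho>0\}}\rho u^2 = \int_{\S^n}\rho (u\1_{\{\rho>0\}})^2$ stays bounded below by a positive constant on the normalized sphere $\sum_i\alpha_i^2=1$, since $V$ has dimension $k+1$ (the map $(\alpha_i)\mapsto u\1_{\{\rho>0\}}$ is injective on the admissible space, so $\int_{\S^n}\rho(u\1_{\{\rho>0\}})^2$ is a positive-definite quadratic form). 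Hence the Rayleigh quotient for $(\rho+\eps,\rho+\eps^2)$ on $\tilde V$ converges uniformly to that for $\rho$ on $V$ as $\eps\to 0$; taking the max over the (normalized) space and then $\eps\to 0$ gives $\limsup_\eps \mu_k^\eps(\rho) \le \mu_k(\rho) + \delta$, and $\delta$ was arbitrary.

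For the $\liminf$ bound, $\liminf_{\eps\to 0}\mu_k^\eps(\rho)\geq\mu_k(\rho)$, I would take for each $\eps$ an optimal (or nearly optimal) $(k+1)$-dimensional subspace $V_\eps\subset\H^1(\S^n)$ realizing $\mu_k^\eps(\rho)$, with an orthonormal-type basis $v_0^\eps,\dots,v_k^\eps$. Since $\mu_k^\eps(\rho)$ is bounded above (by the previous paragraph, or by a crude a priori bound), the numerators $\int_{\S^n}(\rho+\eps)|\nabla v_i^\eps|^2$ are controlled, so each $v_i^\eps$ is bounded in $\H^1(\S^n)$ once normalized appropriately in the $(\rho+\eps^2)$-weighted $L^2$ norm; up to a subsequence $v_i^\eps \rightharpoonup v_i$ weakly in $\H^1(\S^n)$ and strongly in $L^2(\S^n)$. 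The limiting functions $v_0,\dots,v_k$ span a space $V\subset\H^1(\S^n)$, and truncating by $\1_{\{\rho>0\}}$ (after approximating $v_i$ by $C^\infty_c$ functions) gives an admissible competitor for $\mu_k(\rho)$. Lower semicontinuity of $u\mapsto\int_{\S^n}\rho|\nabla u|^2$ under weak $\H^1$ convergence, combined with $\int_{\S^n}(\rho+\eps)|\nabla u_\eps|^2 \geq \int_{\S^n}\rho|\nabla u_\eps|^2$ and strong $L^2$ convergence of the denominators (with $\int_{\S^n}(\rho+\eps^2)u_\eps^2\to\int_{\S^n}\rho u^2$), yields $\mu_k(\rho)\leq\liminf_\eps\mu_k^\eps(\rho)$ — provided the limiting space is genuinely $(k+1)$-dimensional and the limiting Rayleigh quotients on $\{\rho>0\}$ are finite, which one arranges by a careful choice of normalization.

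The main obstacle is the asymmetry of the scaling — the numerator is perturbed by $\eps$ while the denominator is perturbed by $\eps^2$. This is deliberate: it guarantees that on the region $\{\rho=0\}$ a test function with small $(\rho+\eps^2)$-mass but non-negligible gradient cannot produce a small Rayleigh quotient, since the gradient term picks up a factor $\eps$ whereas the mass only gains $\eps^2$, so the quotient blows up like $1/\eps$ rather than staying bounded. Making this rigorous in the $\liminf$ step — ruling out that the optimal $V_\eps$ "escapes" into $\{\rho=0\}$ and thereby checking that the weak limits $v_i$ remain linearly independent \emph{after} truncation by $\1_{\{\rho>0\}}$ — is the delicate point; one handles it by showing that any component of $v_i^\eps$ supported essentially on $\{\rho<\text{small}\}$ contributes a divergent amount to $\mu_k^\eps(\rho)$, contradicting the upper bound, so in the limit the admissibility and dimension count for $\mu_k(\rho)$ are preserved.
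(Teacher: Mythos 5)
Your limsup step is fine and is essentially the paper's argument (test $\mu_k^\eps$ with the untruncated smooth functions spanning a near-optimal space for $\mu_k(\rho)$, and note the perturbations $\eps\int|\nabla u|^2$ and $\eps^2\int u^2$ are uniformly negligible on the normalized coefficient sphere). The gap is in the liminf step. With the normalization $\int_{\S^n}(\rho+\eps^2)(v_i^\eps)^2=1$, the only bound you get from $\mu_k^\eps(\rho)\le M$ is the degenerate weighted one $\int_{\S^n}(\rho+\eps)|\nabla v_i^\eps|^2\le M$; on $\{\rho=0\}$, which may have positive measure, this yields only $\|\nabla v_i^\eps\|_{\L^2}^2\lesssim M/\eps$ and $\|v_i^\eps\|_{\L^2}^2\lesssim 1/\eps^2$. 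So the claimed uniform bound in $\H^1(\S^n)$, and with it the weak $\H^1$ / strong $\L^2$ compactness on which your whole liminf rests, is false in general: the regularized eigenfunctions can blow up on $\{\rho=0\}$ as $\eps\to0$, and this is exactly the phenomenon the $\eps$-versus-$\eps^2$ scaling is meant to control. Your proposed repair --- that any component living on $\{\rho=0\}$ makes the quotient blow up like $1/\eps$ --- does not close the gap: a function that is nearly constant on $\{\rho=0\}$ with amplitude $o(1/\eps)$ has a bounded quotient while still destroying any $\H^1(\S^n)$ bound, so ``escape'' cannot be excluded by that heuristic alone; it must be quantified.

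The paper's proof never extracts limits of the eigenfunctions. For each fixed $\eps$ it takes $u_0^\eps,\dots,u_k^\eps$ normalized by $\int_{\S^n}(\rho+\eps^2)u_i^\eps u_j^\eps=\delta_{ij}$ and uses the span of the truncations $u_i^\eps \1_{\{\rho>0\}}$ directly as a test space for $\mu_k(\rho)$. The one analytic input is that $\eps u_i^\eps\to0$ strongly in $\L^2(\S^n)$: boundedness of $\|\sqrt{\eps}\,\nabla u_i^\eps\|_{\L^2}$ plus Poincar\'e--Wirtinger shows $\eps u_i^\eps$ converges to its mean, and the normalization (which forces $\eps^2\int_{\S^n}\rho(u_i^\eps)^2\to0$) kills that constant. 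Hence $\eps^2\int_{\S^n}(v^\eps)^2\to0$ uniformly over normalized $v^\eps\in\Span\{u_0^\eps,\dots,u_k^\eps\}$, so the denominator $\int_{\S^n}\rho(v^\eps)^2=\int_{\S^n}(\rho+\eps^2)(v^\eps)^2-\eps^2\int_{\S^n}(v^\eps)^2$ is asymptotically the regularized one, while $\int_{\S^n}\rho|\nabla v^\eps|^2\le\int_{\S^n}(\rho+\eps)|\nabla v^\eps|^2$; and the Gram matrix $\bigl(\int_{\S^n}\rho\, u_i^\eps u_j^\eps\bigr)=\bigl(\delta_{ij}-\eps^2\int_{\S^n}u_i^\eps u_j^\eps\bigr)\to I$ guarantees the truncated span is still $(k+1)$-dimensional. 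This fixed-$\eps$ test-space argument (or some genuine substitute for the false $\H^1(\S^n)$ compactness claim) is what your outline is missing.
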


For the proof in the Euclidian case, we refer to \cite[Lemma 14]{BMO_2022}.

\begin{remark}
$\mu_k^\eps(\rho)$ is the $k$-th non-trivial eigenvalue of the well posed elliptic problem
$$
 -\div[(\rho+\eps)\nabla u] = \mu_k^\eps(\rho) (\rho+\eps^2) u
$$
on $\S^n$.
\end{remark}

\begin{proof}
The proof decomposes into proving both the limsup and the liminf. The limsup is proven in the same way as in previous theorem; let us focus on the liminf.

Let $u_0^\eps,...,u_k^\eps \in \H^1(\S^n)$ be the eigenfunctions associated to the eigenvalues $\mu_0^\eps,...,\mu_k^\eps$, orthogonal and normalized in the sense that
\begin{equation}
\label{eqn:orthogonality}
\int_{\S^n} (\rho+\eps^2)u_i^\eps u_j^\eps = \delta_{i,j}.
\end{equation}
This implies that
\begin{equation}
\label{eqn:bounded_grad}\int_{\S^n} (\rho+\eps) |\nabla u_i^\eps|^2 = \mu_i^\eps(\rho)
\end{equation}
and this quantity can be considered bounded independently of $i$ and $\eps$ by some bound M. If not, the limsup would be infinite and the previous case would allow us to conclude. From equation \ref{eqn:orthogonality} we deduce that $(\eps u_i^\eps)_\eps$ is bounded in $\L^2(\S^n)$ for all $i$. Hence we can find a subsequence such that for all $0 \leq i \leq k$, the sequence $(\eps u_i^\eps)$ converges weakly in $\L^2(\S^n)$ to some function $g_i$. Denoting $\bar{v} = \frac{1}{|\S^n|}\int_{\S^n} v$, we get  $\eps \bar{u_i^\eps} \lra \bar{g_i}$ for all $i$, the constant function $1$ being in $\L^2$. Moreover, the sequence $(\sqrt{\eps}\nabla u_i^\eps)_\eps$ is bounded in $\L^2(\S^n)$ hence by  we get by the Poincaré-Wirtinger inequality :
\[
\| \eps u_i^\eps - \eps \bar{u_i^\eps} \|_{\L^2} \leq C \|\eps \nabla u_i^\eps \|_{\L^2} \lra 0.
\]
We deduce that $\eps u_i^\eps \lra \bar{g_i}$ strongly in $\L^2$. We can then conclude that $\bar{g_i} = 0$ by noticing that
\[
0 = \lim_{\eps\to 0} \eps^2 \int_{\S^n} \rho (u_i^\eps)^2 = \int_{\S^n} \rho \bar{g_i}^2 = \bar{g_i}^2 m.
\]
By Cauchy-Schwarz inequality, this implies that $\int_{\S^n} u_i^\eps u_j^\eps \lra 0$ which in turn results in $\int_{\S^n} \eps^2 (v^\eps)^2 \dd x \lra 0$ for all $v^\eps \in \Span\{u_0^\eps, ..., u_k^\eps\}$. Using this last limit and the fact that $\Span\{u_0^\eps 1_{\{\rho>0\}}, ...,u_k^\eps 1_{\{\rho>0\}} \}$ is of dimension $k+1$ for $\eps$ small enough we finally get
\[
\mu_k(\rho)
= \inf_{V\in{\mathcal V}^\rho_{k+1}} \max_{u \in V \sm \{0\}} \frac{\int_{\S^n} \rho|\nabla u|^2}{\int_{\S^n} \rho u^2}
\leq \liminf_{\eps \to 0} \max_{v \in \Span\{u_0^\eps, ..., u_k^\eps \}}  \frac{\int_{\S^n} (\rho+ \eps) |\nabla u|^2}{\int_{\S^n} (\rho+\eps^2) u^2}
= \liminf_{\eps \to 0} \mu_k^\eps(\rho)
\]
which concludes the proof.
\end{proof}

\begin{theorem}[Approximation of maxima]Let $0 < m \leq  |\S^n|$. Then
\begin{equation}
    \max\limits_{\|\rho\|_{\L^1}=m} \mu_k^\eps(\rho) = \max\limits_{\|\rho\|_{\L^1}=m} \mu_k(\rho).
\end{equation}
\end{theorem}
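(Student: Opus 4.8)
The plan is to split the equality into the two inequalities. The inequality $\ge$ is the routine one: let $\rho^\ast$ be a maximizer of $\mu_k$ provided by Theorem~\ref{th:existence}, so that $\max_\rho\mu_k^\eps(\rho)\ge\mu_k^\eps(\rho^\ast)$; Theorem~\ref{th:approximation} then gives $\mu_k^\eps(\rho^\ast)\to\mu_k(\rho^\ast)=\max_\rho\mu_k(\rho)$ as $\eps\to0$, and passing to the limit yields the bound (the $\liminf$ half of the proof of Theorem~\ref{th:approximation}, together with the comparison $\rho+\eps\ge\rho+\eps^2$ of the weights, is what one invokes to see that $\mu_k^\eps(\rho^\ast)$ is not pushed below $\mu_k(\rho^\ast)$ as $\eps\to0$). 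The content of the statement is therefore the reverse inequality, which I would obtain by a compactness argument modeled on the proof of Theorem~\ref{th:existence}.

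Fix a sequence $\eps_n\to0$. For each $n$ pick $\rho_n\in\L^\infty(\S^n,[0,1])$ with $\int_{\S^n}\rho_n=m$ and $\mu_k^{\eps_n}(\rho_n)\ge\max_\rho\mu_k^{\eps_n}(\rho)-\eps_n$ (one could instead take a genuine maximizer, whose existence follows from the weak-$*$ upper semicontinuity of $\rho\mapsto\mu_k^\eps(\rho)$, proved exactly as in Theorem~\ref{th:existence} and in fact easier, since $\rho+\eps^2$ never vanishes). The family $(\rho_n)$ is bounded in $\L^\infty(\S^n)$, so up to a subsequence $\rho_n\rau\rho_0$ weakly-$*$, and because $1\in\L^1(\S^n)$ the constraint passes to the limit, $\int_{\S^n}\rho_0=m$. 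It is then enough to prove the ``joint'' upper semicontinuity $\limsup_n\mu_k^{\eps_n}(\rho_n)\le\mu_k(\rho_0)$, since its right-hand side is $\le\max_\rho\mu_k(\rho)$ and its left-hand side is $\ge\limsup_n\max_\rho\mu_k^{\eps_n}(\rho)$.

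To establish this estimate, fix $\delta>0$ and choose $v_0,\dots,v_k\in C_c^\infty(\S^n)$ so that $V_0:=\Span\{v_0\1_{\{\rho_0>0\}},\dots,v_k\1_{\{\rho_0>0\}}\}$ has dimension $k+1$ and realizes $\mu_k(\rho_0)$ within $\delta$. Because the test space for $\mu_k^\eps$ is all of $\H^1(\S^n)$, no truncation is needed here: one uses directly $W_n:=\Span\{v_0,\dots,v_k\}\subset\H^1(\S^n)$, of dimension $k+1$ for $n$ large, so that $\mu_k^{\eps_n}(\rho_n)\le\max_{u\in W_n}\frac{\int_{\S^n}(\rho_n+\eps_n)|\nabla u|^2}{\int_{\S^n}(\rho_n+\eps_n^2)u^2}$. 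Taking a maximizer $u_n=\sum_i\alpha_i^n v_i$ with $\sum_i(\alpha_i^n)^2=1$ and extracting $\alpha_i^n\to\alpha_i$, $u_n\to\tilde v:=\sum_i\alpha_i v_i$ in $C^\infty$, the weak-$*$ convergence $\rho_n\rau\rho_0$ together with $\eps_n\to0$ and the uniform convergences $|\nabla u_n|^2\to|\nabla\tilde v|^2$, $u_n^2\to\tilde v^2$ give $\int_{\S^n}(\rho_n+\eps_n)|\nabla u_n|^2\to\int_{\S^n}\rho_0|\nabla\tilde v|^2$ and $\int_{\S^n}(\rho_n+\eps_n^2)u_n^2\to\int_{\S^n}\rho_0\tilde v^2>0$. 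Hence $\limsup_n\mu_k^{\eps_n}(\rho_n)\le\frac{\int_{\S^n}\rho_0|\nabla\tilde v|^2}{\int_{\S^n}\rho_0\tilde v^2}\le\max_{u\in V_0}\frac{\int_{\S^n}\rho_0|\nabla u|^2}{\int_{\S^n}\rho_0u^2}\le\mu_k(\rho_0)+\delta$, and letting $\delta\to0$ finishes the proof. One uses here that $\int_{\S^n}\rho_0\tilde v^2>0$ (because $\sum_i\alpha_i^2=1$ and the $v_i\1_{\{\rho_0>0\}}$ are linearly independent) and that $\int_{\S^n}\rho_0|\nabla\tilde v|^2=\int_{\S^n}\rho_0|\nabla(\tilde v\1_{\{\rho_0>0\}})|^2$, the two gradients agreeing on $\{\rho_0>0\}$.

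The delicate point is exactly this joint upper semicontinuity: one must guarantee that a genuinely $(k+1)$-dimensional competitor persists in the limit and that the limiting test function $\tilde v$ is admissible for $\mu_k(\rho_0)$, i.e. that no portion of $\tilde v^2$ escapes into $\{\rho_0=0\}$. Both are handled by the same bookkeeping as in Theorems~\ref{th:existence} and \ref{th:approximation} — the normalization $\sum_i(\alpha_i^n)^2=1$ for the dimension count, and the gradient identity above for admissibility — so the argument is really the superposition of those two, performed with $\eps_n\to0$ simultaneously with the weak-$*$ passage to the limit of the densities.
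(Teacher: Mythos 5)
Your proposal is correct and follows essentially the same route as the paper: one inequality via the $\liminf$ part of Theorem~\ref{th:approximation} applied at a maximizer $\rho^\ast$ of $\mu_k$, and the reverse via weak-$*$ compactness of (near-)maximizers of $\mu_k^\eps$ together with the upper-semicontinuity argument of Theorem~\ref{th:existence} adapted to carry the vanishing $\eps$-terms. The only difference is that you write out in full the ``joint'' upper semicontinuity $\limsup_n\mu_k^{\eps_n}(\rho_n)\le\mu_k(\rho_0)$, which the paper merely invokes as being ``in the same way as'' the proof of Theorem~\ref{th:existence}.
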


\begin{proof}Let $(\rho^\eps)_\eps$ be such that $\mu_k^\eps(\rho^\eps) = \max\limits_{\|\rho\|_{\L^1}=m} \mu_k^\eps(\rho) $ and $\rho^*$ be such that $\mu_k(\rho^*) = \max\limits_{\|\rho\|_{\L^1}=m} \mu_k(\rho)$. In the same way than the upper semicontinuity in theorem ~\ref{th:existence}, we have that \[
\limsup_{\eps \lra 0} \mu_k^\eps(\rho^\eps) \leq \mu_k(\tilde \rho) < \max_{\|\rho\|_{\L^1}=m} \mu_k(\rho) < +\infty
\]
whenever $\rho^\eps \rightharpoonup \tilde \rho$ weakly in $\L^\infty$. Hence the sequence $(\mu_k^\eps(\rho^\eps))_\eps$ is bounded. Then the previous lower-semicontinuity result implies that
\[
\max\limits_{\|\rho\|_{\L^1}=m} \mu_k(\rho)
= \mu_k(\rho^*)
\leq \liminf_{\eps \to 0} \mu_k^\eps(\rho^\eps)
=  \liminf_{\eps \to 0} \max\limits_{\|\rho\|_{\L^1}=m} \mu_k^\eps(\rho^\eps)
\]
which concludes the proof.
\end{proof}

\section{Density method}

In this section we discuss the numerical implementation of the density method, which follows the same lines as \cite{BMO_2022} with some new technical difficulties working on the sphere. The sphere is assumed to be discretized by a mesh that remains the same during the  optimization process. Let $V_h= \Span(\phi_1,...,\phi_n) \subset \H^1(\S^N)$ be a finite element space. If $v = \sum_i v_i \phi_i$ we denote by $\bar v = (v_1,...,v_n)^T$ its coordinates on the basis $(\phi_i)_i$. Even if we made the choice here to discretize both the density and the eigenfunctions on the same FE space $V_h$, we could have considered different ones as we did in our previously cited work. Let $\rho \in V_h$ be a density (i.e. $\rho : \S^n \to [0,1]$). We denote by $\bar \mu_k^\eps(\rho)$ the eigenvalue of the finite-dimensional eigenvalue problem :
\begin{equation}
    \label{eqn:fe_eigenvalue}
    \bm{M}^\eps(\rho) \bar u_k^\eps(\rho) = \bar \mu_k^\eps(\rho) \bm{K}^\eps(\rho) \bar u_k^\eps(\rho)
\end{equation}

where
\[
    \bm{M}^\eps(\rho) = \left( \int_{\S^n} (\rho+\eps) \nabla \phi_i \nabla \phi_j \right)_{i,j}
\]
and
\[
    \bm{K}^\eps(\rho) = \left( \int_{\S^n} (\rho+\eps^2) \phi_i \phi_j \right)_{i,j}.
\]
Since we use a gradient-based optimization method, we need to differentiate $\bar \mu_k^\eps(\rho)$ with respect to its coordinates $(\rho_1,...,\rho_n)$ in the basis $(\phi_i)_i$. Assuming that $ \bar \mu_k^\eps(\rho)$ is simple, we can differentiate the equation \eqref{eqn:fe_eigenvalue} and multiply on the left by $(\bar u_k^\eps(\rho))^T$ to get
\begin{equation}
    \partial_l \bar \mu_k^\eps = \frac{(\bar u_k^\eps)^T\left( \partial_l \bm{M}^\eps  - \bar \mu_k^\eps \partial_l \bm{K}^\eps \right) \bar u_k^\eps}{(\bar u_k^\eps)^T  \bm{K}^\eps(\rho) \bar u_k^\eps}
\end{equation}
where the derivatives of the matrices are respectively
\[
    \partial_l \bm{M}^\eps(\rho) = \left( \int_{\S^n} \phi_l \nabla \phi_i \nabla \phi_j \right)_{i,j}
\]
and
\[
    \partial_l \bm{K}^\eps(\rho) = \left( \int_{\S^n} \phi_l \phi_i \phi_j \right)_{i,j}.
\]

One may remember that since we are on the sphere, we don't have scale-homogeneity of the eigenvalue as it is the case in $\R^n$. Hence we have to enforce the condition $\int_{\S^n} \rho = m$, which in the discrete case becomes
$\bar \rho \cdot \bm g = m$ where
\[
  \bm g = \left(\int_{\S^n} \phi_l\right)_l.
\]

\subsection{Multiple eigenvalues.}

One of the main hurdles in spectral shape optimization is to handle the multiplicity of the eigenvalues. Indeed, in practice, the optimal density is expected to have high multiplicity. The method presented in \cite{BMO_2022} consisted in adding a constraint forcing the eigenvalues that were close (depending on a certain threshold $\sigma$) to get them even closer. While it yielded good results in the planar case, it suffered one peculiar issue here, especially in the case of $\mu_1$. Indeed, even if the optimal density seems to be of multiplicity $2$, the third eigenvalue is observed to be close to $\mu_2$ leading the previous method either to fall into a local maximum of multiplicity $3$ if $\sigma$ was too high, or to be too unstable to converge if $\sigma$ was too small.
One way to overcome this issue is to compute a better direction than the one given by the gradient of $\mu_k$. Several numerical methods have been investigated in this direction, such as finding the direction $h$ as a solution to the problem
\[
    \max_{h \in V_h, \| h \| = 1} \min \left\{ \dd \mu_k^\eps(\rho). h, ..., \dd \mu_{k+m-1}^\eps(\rho).h \right\}
\]
where $m$ is the "guessed" multiplicity, chosen such that $\mu_{k+m-1}^\eps(\rho) - \mu_k^\eps(\rho) \leq \sigma$ and $\mu_{k+m}^\eps(\rho) - \mu_k^\eps(\rho) > \sigma$. Such method has been used for instance in\cite{antunes_numerical_2017}. Heuristically, it produces a direction that increases all the selected eigenvalues by a maximal amount. Another, more rigorous method would be to consider the true directional derivative of our multiple eigenvalue in the direction $h$ (which always exists), namely
\[
    (\mu^\eps_k)'(\rho)(h) := \lim_{t \to 0^+} \frac{\mu^\eps_k(\rho+th) - \mu_k^\eps(\rho)}{t}
\]
and in the same way as before, to search the direction that maximizes this variation :
\[
    \max_{h \in V_h, \| h \| = 1} (\mu^\eps_k)'(\rho)(h).
\]
This has been brilliantly studied in \cite{de_gournay_velocity_2006} where the author shows that the search of the direction can be efficiently solved by semi-definite programming methods.

However, in our case, these methods did not seem to perform better that our previous one. It turned out that a simple modification of our problem leads to a very good convergence. Instead of searching for a direction based on directional derivatives, we regularize our problem to make it differentiable. It can then be handled better from the interior point optimizer. The idea is the following : suppose that $\mu_k^\eps(\rho)$ stays away from $\mu_{k-1}^\eps(\rho)$ during the whole optimization process (which is always the case in practice) and is part of a cluster $\mu_{k}^\eps(\rho), ..., \mu_{k+m-1}^\eps(\rho)$ of eigenvalues closer than $\sigma$. Then obviously

\begin{equation}
    \mu_{k}^\eps(\rho) = \min \left\{\mu_{k}^\eps(\rho), ..., \mu_{k+m-1}^\eps(\rho)\right\}.
\end{equation}

But for $x_0,...,x_{m-1} > 0$, we have that
\begin{equation}
    \min \left\{ x_0,..., x_{m-1}\right\} = \lim_{p\to +\infty} \left( \sum_{i} x_i^{-p}\right)^{-1/p}
\end{equation}

so that by taking $p$ large, we can approximately write
\begin{equation}
    \min \left\{ x_0,..., x_{m-1}\right\} \approx \left( \sum_{i} x_i^{-p}\right)^{-1/p}.
\end{equation}

In this spirit, we instead optimize the functional
\begin{equation}
    \rho \mapsto \left( \sum_{i} \mu_{k+i}^\eps(\rho)^{-p}\right)^{-1/p}.
\end{equation}

This function being a symmetric function of the eigenvalues, it is expected to be smooth near $\rho$ if $m$ is the actual multiplicity of $\mu_k^\eps(\rho)$ \cite{kato2013perturbation}. If it is not, the largest eigenvalues of the cluster are mostly ignored.

\subsection{Dimension 1}

After running a series of simulations on the sphere, it appeared that the optimal density seems to be axially symmetric in the case of $\mu_1$. In order to get even more insight on the density problem, we subsequently ran simulations only in 1D, considering the density $\rho$ as a real function of the latitude $\theta \in [0,\pi]$, i.e. the angle from one pole to a point on the sphere. By separation of variables, if $\rho : \S^2 \to [0,1]$ is axially symmetric and not degenerated, then $\mu_1(\rho)$ is the least non-zero eigenvalue of the following two eigenvalue problems

\begin{equation*}
    \begin{cases}
        -\frac{1}{\sin(\theta)} \frac{\dd}{\dd \theta} \left( \rho(\theta) \sin(\theta) \frac{\dd y}{\dd \theta} \right) + \frac{\rho(\theta)}{\sin^2(\theta)} y = \rho(\theta) \bar \mu y \mbox{ on } (0, \pi) \\
        y(0) \mbox{ and } y(\pi) \mbox{ are finite}
    \end{cases}
\end{equation*}

\begin{equation*}
    \begin{cases}
        -\frac{1}{\sin(\theta)} \frac{\dd}{\dd \theta} \left( \rho(\theta) \sin(\theta) \frac{\dd y}{\dd \theta} \right) = \rho(\theta) \tilde \mu y \mbox{ on } (0, \pi) \\
        y(0) \mbox{ and } y(\pi) \mbox{ are finite}
    \end{cases}.
\end{equation*}
See \cite{ashbaugh_sharp_1995} for more details. Formally, by developping the derivative in the first differential equation, we can see that the condition $y(0)=y\pi)=0$ is forced by the term in $\frac{1}{\sin(\theta)}$, which penalizes large values of $y$ in $0$ and $\pi$. In the same way, we can see that in the second differential equation, the boundary conditions needs to be $y'(0)=y'(\pi)=0$.

As before, since we allow $\rho$ to vanish, we need to regularize the problem to make it elliptic. The problems that are actually solved are

\begin{equation*}
    \begin{cases}
        -\frac{\dd}{\dd \theta} \left( \left(\rho(\theta) \sin(\theta) + \eps \right)\frac{\dd y}{\dd \theta} \right) + \frac{\rho(\theta)+\eps}{\sin(\theta)} y = (\rho(\theta)\sin(\theta)+\eps^2) \bar \mu y \mbox{ on } (0, \pi) \\
        y(0)=y(\pi) =0
    \end{cases}
\end{equation*}

\begin{equation*}
    \begin{cases}
        -\frac{\dd}{\dd \theta} \left( \left(\rho(\theta) \sin(\theta) + \eps \right) \frac{\dd y}{\dd \theta} \right) = (\rho(\theta)\sin(\theta)+\eps^2) \tilde \mu y \mbox{ on } (0, \pi) \\
        y'(0)=y'(\pi)=0
    \end{cases}
\end{equation*}
where $\eps$ is supposed to be small.

\subsection{Numerical considerations.}

Our optimization procedure is carried out by IPOPT \cite{wachter2006implementation} while the finite element computations is perfomed in GetFEM \cite{MR4199501}. A first optimization is carried on a coarse mesh of $2246$ vertices with $\P_1$ finite elements. A second optimization is then performed with the result of the previous one as initilization on a mesh consisting in $35401$ elements (the meshes that are used can be vizualized Figure ~\ref{fig:meshes_density}). For each $m$, the optimization is performed multiple times with different initialization and the density giving
the best value is finally kept.

\begin{figure}
    \centering
    \includegraphics[width=0.33\textwidth]{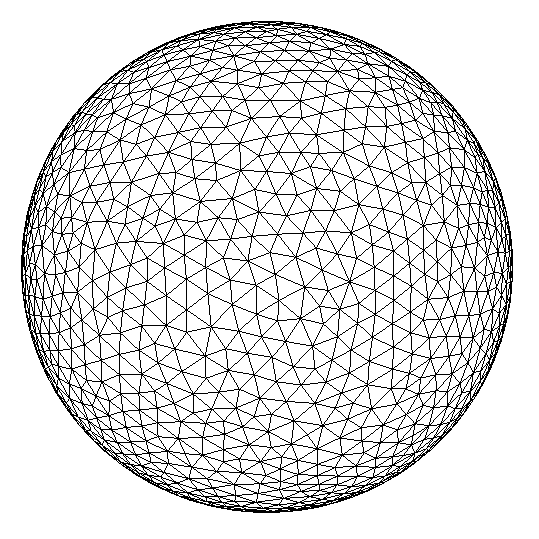}
    \includegraphics[width=0.33\textwidth]{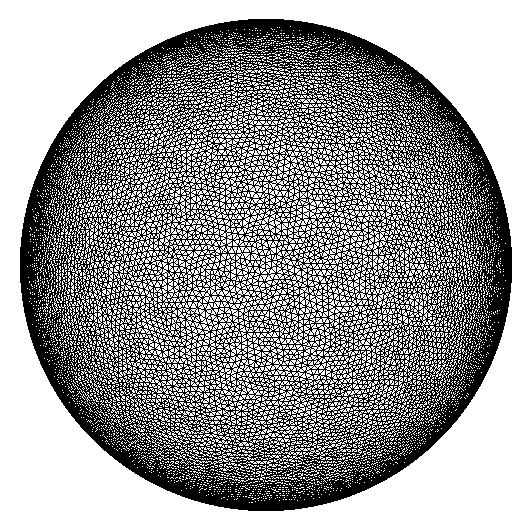}
    \caption{The meshes used for the density method.}
    \label{fig:meshes_density}
\end{figure}

For both optimizations we take $p=20$ and $\eps = 10^{-4}$.

\section{Results : density method.}

In this section we discuss the results obtained by the density method described above. We focus only on the first three eigenvalues, since they already shows a rich behaviour. In each graph, the value of the optimized eigenvalue is plotted in green as a function of the total mass $m$ and the corresponding density is denoted by $\rho^m$. In order to give a comparison, for each $\mu_k$ we plot in red the corresponding eigenvalue for a union of $k$ disjoint geodesic balls of surface area $m/k$ (denoted $\UB^m_k$). This values have been approximated by a finite element (FE) decomposition of the following 1D eigenvalue problem :
\begin{equation}
    \begin{cases}
        -\frac{1}{\sin(\theta)} \frac{\dd}{\dd \theta} \left( \sin(\theta) \frac{\dd y}{\dd \theta} \right) + \frac{1}{\sin^2(\theta)} y = \mu y \mbox{ on } (0, \theta_m) \\
        \frac{\dd y}{\dd \theta}(\theta_m) = 0 \\
        y(0) \mbox{ is finite}
    \end{cases}
\end{equation}
where $\theta_m = \arccos{(1-\frac{m}{2\pi})}$ is the geodesic radius of the ball of surface area $m$ on $\S^2$. In practice, the solution is approximated using $\P_1$ FE with $10000$ degrees of freedom.

\subsection{Validation : $\mu_1$ with constraint.}

In order to validate our optimization process, we rely on the following result of \cite{bucur_sharp_2022}, which states that if we run the optimization outside of a ball of the right area then the optimum is a ball :

\begin{theorem}Let $m \in (0, |\S^n|/2)$ and let $\B^m$ be a geodesic ball of measure $m$ in $\S^n$. Let
$\Omega \subset \S^n \setminus \B^m$ be an open Lipschitz set such that $|\Omega| = m$. Then
$\mu_1(\Omega) \leq \mu_1(\B^m)$.
\end{theorem}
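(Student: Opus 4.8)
The plan is to run a Weinberger-type (``mass transplantation'') argument, using as test functions deformations of the first Neumann eigenfunctions of the comparison cap $\B^m$. Place $\B^m$ at the north pole $N$, so $\B^m=\{x:\theta(x)<R\}$ with $\theta(x)=d(x,N)$ and $|\B^m|=m$; since $m<|\S^n|/2$ we have $R<\pi/2$. The first nontrivial Neumann eigenfunctions of $\B^m$ are $\varphi(\theta)\,\omega_i$, $i=1,\dots,n$, where $\omega=(x_1,\dots,x_n)/\sin\theta\in\S^{n-1}$ and the radial profile $\varphi$ solves the associated singular Sturm--Liouville problem on $(0,R)$ with $\varphi(0)=0$, $\varphi'(R)=0$; for $R\le\pi/2$ the profile $\varphi$ is increasing. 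One extends $\varphi$ to a profile $g$ on $[0,\pi]$ and, for a center $p\in\S^n$ to be chosen, sets $v^p_i(x)=g(d(x,p))\,\langle e_p(x),f_i\rangle$ for $i=1,\dots,n$, where $e_p(x)\in T_p\S^n$ is the unit initial velocity of the geodesic from $p$ to $x$ and $\{f_i\}$ is an orthonormal frame at $p$. Then $\sum_i (v^p_i)^2=g(d)^2=:\tilde A(d)$ and $\sum_i|\nabla v^p_i|^2 = g'(d)^2+(n-1)g(d)^2/\sin^2 d=:\tilde B(d)$, and with $Q:=\tilde B-\mu_1(\B^m)\tilde A$ the eigenvalue equation for $\varphi$ gives $\int_0^R Q(\theta)\sin^{n-1}\theta\,d\theta=0$.

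To use the $v^p_i$ in the variational characterisation of $\mu_1(\Omega)$ one needs the balancing conditions $\int_\Omega v^p_i=0$. Since $\nabla^{\S^n}_p\,d(x,p)=-e_p(x)$, taking a primitive $\Psi$ with $\Psi'=-g$ gives $g(d(x,p))\,e_p(x)=\nabla^{\S^n}_p[\Psi(d(x,p))]$, so that the vector $p\mapsto\int_\Omega g(d(x,p))\,e_p(x)\,dx\in T_p\S^n$ is the gradient of the smooth function $H(p):=\int_\Omega\Psi(d(x,p))\,dx$ on the closed manifold $\S^n$. Choosing $p^\ast$ a maximum point of $H$ yields $\int_\Omega v^{p^\ast}_i=0$ for all $i$, and hence
\[
\mu_1(\Omega)\ \le\ \frac{\sum_i\int_\Omega|\nabla v^{p^\ast}_i|^2}{\sum_i\int_\Omega (v^{p^\ast}_i)^2}\ =\ \mu_1(\B^m)+\frac{\int_\Omega Q(d(x,p^\ast))\,dx}{\int_\Omega\tilde A(d(x,p^\ast))\,dx}.
\]

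It then remains to choose the extension $g$ so that $\int_\Omega Q(d(x,p^\ast))\,dx\le 0$. If $\Omega$ were contained in a hemisphere about $p^\ast$, this would follow from the constant extension $g\equiv\varphi(R)$ on $[R,\pi]$, monotonicity of $Q$ on $[0,\pi/2]$, and the bathtub rearrangement inequality combined with $|\Omega|=|\B(p^\ast,R)|=m$; this is the Ashbaugh--Benguria situation. In the present generality $\Omega$ need not lie in any hemisphere: with the constant extension $Q$ increases past the equator and in fact blows up at the antipode of $p^\ast$ (where the fields $e_{p^\ast}(\cdot)$ are singular), so $g$ must instead be made to decay near $\pi$. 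The hypothesis $\Omega\subset\S^n\setminus\B^m$ enters precisely here: it confines $\Omega$ away from $N$ (equivalently, inside a cap of measure $|\S^n|-m$), and, together with control on the position of the balancing center $p^\ast$ relative to $\B^m$, this is what lets one absorb the contribution to $\int_\Omega Q(d(x,p^\ast))\,dx$ coming from distances near $\pi$ and then apply the rearrangement argument on the remaining range; the value $\mu_1(\B^m)$ is attained by $\B^m$ itself by construction of $\varphi$.

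The delicate step is this last one. Unlike the hemisphere case, one cannot simply invoke a rearrangement inequality after fixing an extension, because (i) the balancing center $p^\ast$ is not a priori tied to the excluded ball, and (ii) no admissible ($H^1$) radial extension makes $Q\le 0$ on all of $[R,\pi]$, so the mass of $\Omega$ at large distance from $p^\ast$ must be estimated quantitatively rather than discarded. Carrying out that estimate --- and in particular exploiting that $\Omega$ avoids a ball of measure \emph{exactly} $m$, not merely a small ball --- is the heart of the argument. An attractive alternative to attempt first would be to reduce, via a constraint-preserving symmetrisation, to domains that are unions of geodesic annuli about $N$, for which balancing holds with $p^\ast=N$ and the transplantation step becomes immediate; whether such a symmetrisation can be made not to decrease $\mu_1$ is itself the main question for that route.
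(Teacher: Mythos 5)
Your write-up is a setup, not a proof, and the gap is exactly where you say it is. The transplantation machinery you assemble is the standard Weinberger/Ashbaugh--Benguria scheme and is correctly stated: the profile $\varphi$ of the cap, the identity $\sum_i|\nabla v_i^p|^2=g'(d)^2+(n-1)g(d)^2/\sin^2 d$, the vanishing of $\int_0^R(\tilde B-\mu_1(\B^m)\tilde A)\sin^{n-1}\theta\,d\theta$, the balancing of the center by maximizing $H(p)=\int_\Omega\Psi(d(x,p))\,dx$, and the resulting bound $\mu_1(\Omega)\le\mu_1(\B^m)+\int_\Omega Q/\int_\Omega\tilde A$. But none of this uses the hypothesis $\Omega\subset\S^n\setminus\B^m$, and the theorem is false without it (for $m$ large the cap is not a maximizer, as the numerics in this very paper indicate). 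The decisive step --- producing an admissible extension $g$ and a control on the balanced center $p^\ast$ for which $\int_\Omega Q(d(x,p^\ast))\,dx\le 0$, precisely because $\Omega$ avoids a ball of measure exactly $m$ --- is the content of the theorem, and you explicitly leave it open: you note that no radial extension makes $Q\le 0$ up to the antipode, that $p^\ast$ is not a priori related to the excluded cap, and you end by proposing an alternative symmetrization whose key monotonicity property you also do not establish. An argument whose ``heart'' is acknowledged but not carried out cannot be accepted as a proof of the statement.

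For context, the paper you were asked to match does not prove this theorem either: it quotes it from \cite{bucur_sharp_2022} (where it is established by a mass-transplantation argument, and in a form robust enough to pass to densities, which is what the paper exploits for its numerical validation). So the relevant comparison is with that cited proof, and the missing ingredient in your sketch is exactly the part of that argument which converts the geometric constraint ``$\Omega$ lies outside a cap of the same measure'' into the sign condition on the transplanted Rayleigh quotient. Until you supply that step (or a genuinely different closing argument, e.g. a symmetrization proved not to decrease $\mu_1$ under the measure and avoidance constraints), the proposal should be regarded as an outline of the known strategy rather than a proof.
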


This theorem, proved by some mass transplantation technique "à la Weinberger", is hence also valid for densities and could be reformulated as follows :

\begin{theorem}Let $m \in (0, |\S^n|/2)$ and let $\B^m$ be a geodesic ball of measure $m$ in $\S^n$. Let
$\rho : \S^n \to [0,1]$ such that $\rho = 0$ on $\B^m$ and $ \int_{\S^n} \rho = m$. Then
$\mu_1(\rho) \leq \mu_1(\B^m)$.
\end{theorem}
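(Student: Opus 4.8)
The plan is to reproduce the mass--transplantation proof of the preceding theorem (that of \cite{bucur_sharp_2022}) almost verbatim, the point being that it never uses the competitor $\Om$ except through integrals $\int_{\S^n}\1_\Om f\,\dLn$, and these may be replaced throughout by $\int_{\S^n}\rho f\,\dLn$. One only has to check that the hypotheses $0\le\rho\le 1$, $\int_{\S^n}\rho=m$ and $\rho=0$ on $\B^m$ supply exactly the inequalities used there.

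In more detail: normalise so that $\B^m$ is the geodesic ball of radius $r_m$ centred at a pole $N$, write $x=(\sin\theta\,\om,\cos\theta)$ with $\theta=\operatorname{dist}(x,N)$, $\om\in\S^{n-1}$, and recall that $\mu_1(\B^m)$ is attained by the $n$ functions $g(\theta)\om_j$, $j=1,\dots,n$, where the radial profile $g$ solves the associated Sturm--Liouville problem with $g(0)=0$, $g'(r_m)=0$ and $g>0$ on $(0,r_m]$; summing the Rayleigh quotients of these $n$ eigenfunctions gives $\mu_1(\B^m)=\big(\int_{\B^m}H\big)/\big(\int_{\B^m}g^2\big)$ with $H(\theta)=g'(\theta)^2+(n-1)g(\theta)^2/\sin^2\theta$. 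Let $G$ be the extension of $g$ to $[0,\pi]$ built in \cite{bucur_sharp_2022} and set $\Phi_j=G(\theta)\om_j$ on $\S^n$; on $\{\rho>0\}\subset\S^n\sm\B^m$ one has $\sum_j\Phi_j^2=G(\theta)^2$ and $\sum_j|\nabla\Phi_j|^2=G'(\theta)^2+(n-1)G(\theta)^2/\sin^2\theta$. After the Brouwer--type centring argument of \cite{bucur_sharp_2022} --- which only uses that $\rho\ge 0$ is bounded with $\int_{\S^n}\rho=m$ --- one may assume $\int_{\S^n}\rho\,\Phi_j=0$ for all $j$, so that each $\Phi_j\,\1_{\{\rho>0\}}$ together with the constant spans a $2$--dimensional subspace admissible in \eqref{density_var}; testing with it and summing over $j$ yields
\[
\mu_1(\rho)\ \le\ \frac{\sum_j\int_{\S^n}\rho\,|\nabla\Phi_j|^2}{\sum_j\int_{\S^n}\rho\,\Phi_j^2}\ =\ \frac{\int_{\S^n}\rho\,\big(G'^2+(n-1)G^2/\sin^2\theta\big)}{\int_{\S^n}\rho\,G^2}.
\]

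It then suffices to prove the two comparison inequalities $\int_{\S^n}\rho\,\big(G'^2+(n-1)G^2/\sin^2\theta\big)\le\int_{\B^m}H$ and $\int_{\B^m}g^2\le\int_{\S^n}\rho\,G^2$, which together give the bound $\mu_1(\B^m)$. By the choice of $G$, the function $\theta\mapsto G'(\theta)^2+(n-1)G(\theta)^2/\sin^2\theta$ is non-increasing and $\theta\mapsto G(\theta)^2$ is non-decreasing (this is the one genuinely sphere--specific computation, carried out in \cite{bucur_sharp_2022}); since $\rho$ is supported in $\{\theta\ge r_m\}$, takes values in $[0,1]$ and has mass $m=|\B^m|$, the bathtub principle shows that over all such competitors both quotients are extremised by $\rho=\1_{\B^m}$, which is exactly the two displayed inequalities.

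The step I expect to require the most care is this last rearrangement, but it is in fact not a genuinely new obstacle: in \cite{bucur_sharp_2022} it is phrased as a comparison between a \emph{set} $\Om$ with $|\Om|=|\B^m|$ and $\B^m$, and this is precisely the saturated case $\rho\in\{0,1\}$ of the bathtub principle, so the $[0,1]$--valued version follows from the same argument. The centring step transfers unchanged, and replacing $\1_\Om$ by $\rho$ is pure bookkeeping --- which is the sense in which the theorem ``is also valid for densities''. (An approximation route via oscillating characteristic functions $\1_{\Om_k}\rau\rho$ does \emph{not} work directly, because $\rho\mapsto\mu_1(\rho)$ is only upper semicontinuous for weak-$*$ convergence, which would give an inequality in the wrong direction.)
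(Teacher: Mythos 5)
Your overall strategy---rerun the mass-transplantation proof with $\int_{\S^n}\1_\Omega f$ replaced everywhere by $\int_{\S^n}\rho f$---is exactly the justification the paper itself gives (it offers nothing more detailed than that remark), so the issue is not the approach but the details you supply, and there the argument as written fails. The two monotonicity properties you attribute to the extended profile cannot hold simultaneously on $[0,\pi]$: if $\theta\mapsto G'(\theta)^2+(n-1)G(\theta)^2/\sin^2\theta$ were non-increasing, then since $\sin\theta\to0$ at the antipodal pole one would need $G(\pi)=0$, which is incompatible with $G^2$ non-decreasing and $G(r_m)=g(r_m)>0$ (for $n\ge2$). Worse, if both monotonicities did hold, you would have proved too much: your bathtub step would then not need the hypothesis $\rho=0$ on $\B^m$ at all, and the argument would yield $\mu_1(\rho)\le\mu_1(\B^m)$ for \emph{every} density of mass $m<|\S^n|/2$---precisely the unconstrained statement that the paper's own numerics (homogenized caps strictly beating the geodesic ball already for $m\approx4.5<2\pi$ on $\S^2$) and its remark on the limits of Weinberger's technique indicate is false.

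The second, related gap is the interaction between the centring step and the support constraint. The Brouwer-type centring selects a new pole $p$ so that $\int_{\S^n}\rho\,G(d(x,p))\,\omega_j=0$; but the hypothesis $\rho=0$ on $\B^m$ is tied to the fixed centre of $\B^m$, and after recentring $\rho$ has no reason to vanish on the geodesic ball of radius $r_m$ about $p$, so the subsequent comparison ``since $\rho$ is supported in $\{\theta\ge r_m\}$'' is no longer available. Conversely, if you keep the pole tied to $\B^m$ (say at its antipode, so that the constraint reads: $\rho$ is supported in $\{\theta\le\pi-r_m\}$, the region where an extension with the needed bounds can actually be constructed), then the orthogonality conditions are no longer free and must be obtained by some other device---this is exactly the point where the avoided-ball hypothesis enters the domain proof, and your ``verbatim transfer'' skips it. (A minor point in the same step: $\1_{\B^m}$ is not itself an admissible competitor under the support constraint, so it cannot ``extremise over such competitors''; what you need are only the two displayed integral inequalities.) The correct route is the one the paper intends: take the actual constrained argument of \cite{bucur_sharp_2022} and check, line by line, that every use of the competitor is through $\int\1_\Omega f$ with $0\le\1_\Omega\le1$, the mass, and the support condition, so that $\1_\Omega$ may be replaced by $\rho$; your reconstruction instead follows the unconstrained Weinberger scheme, which both overreaches and omits the step where the hypothesis $\rho=0$ on $\B^m$ is genuinely used. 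Your closing observation that an approximation by oscillating characteristic functions cannot work (upper semicontinuity goes the wrong way) is correct.
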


In practice, we run the optimization process in the whole sphere but add the constraint that all degrees of freedom of $\rho$ that lies inside a certain ball $\B^m$ stay equal to $0$. This constraint is easily handled by IPOPT. The plots in Figures \ref{fig:mu_1_constraint_density_examples} and \ref{fig:mu_1_density_constraint} show that the optimal density is indeed the characteristic function of a ball.

\begin{figure}
    \centering
    \includegraphics[width=0.24\textwidth]{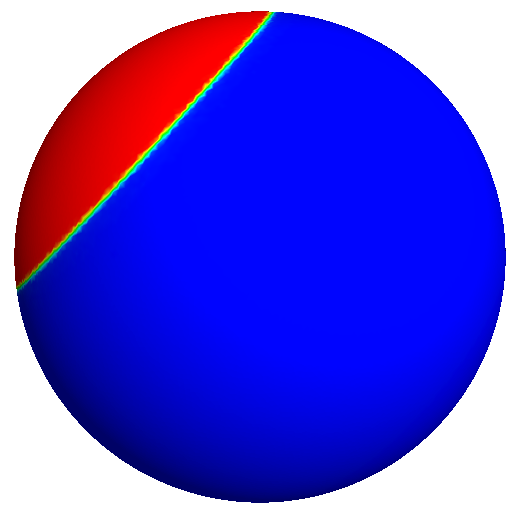}
    \includegraphics[width=0.24\textwidth]{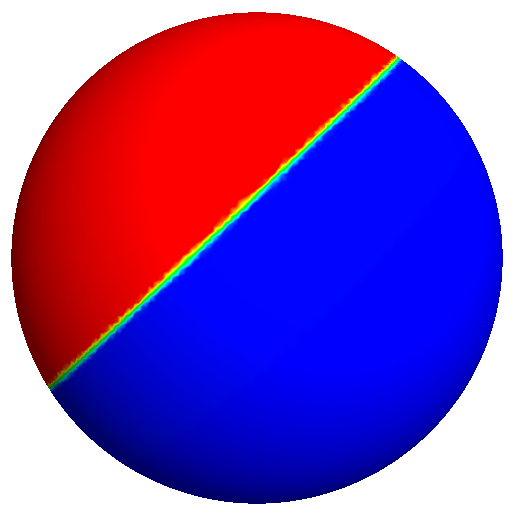}
    \caption{Example of optimal densities for $\mu_1$ for $m \in \{2.17, 5.0\}$ when $\rho$ is supported outside of a ball.}
    \label{fig:mu_1_constraint_density_examples}
\end{figure}

\begin{figure}
    \centering
    \includegraphics[width=0.49\textwidth]{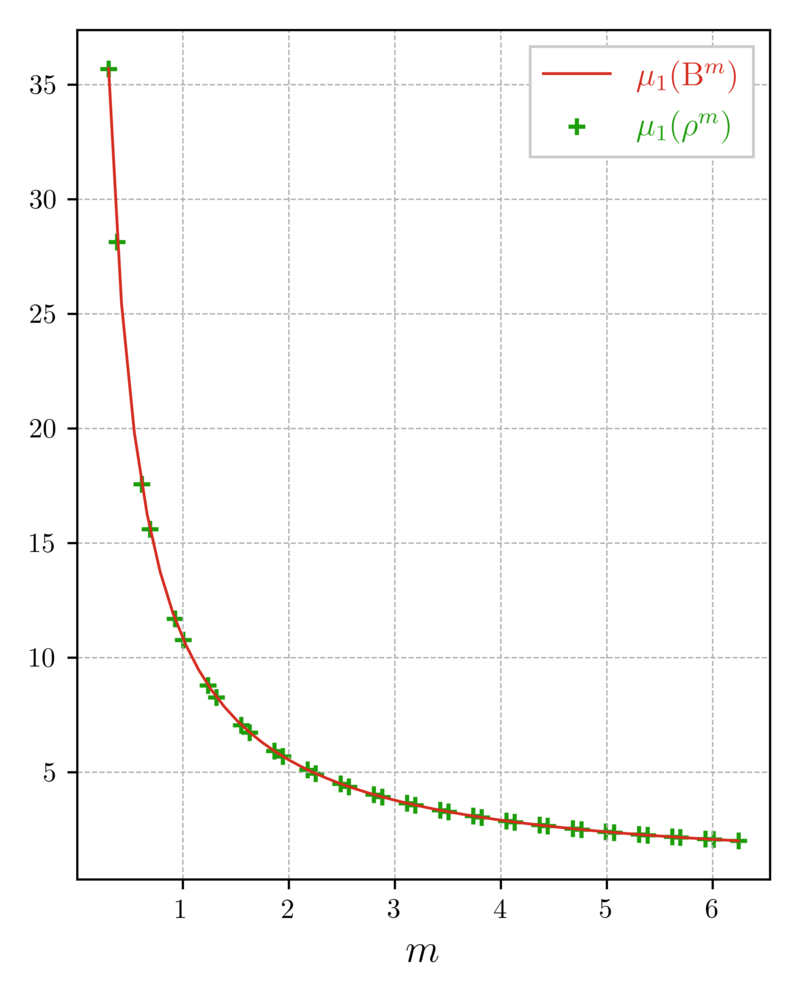}
    \caption{Optimal value of $\mu_1$ obtained by the density method, with the constraint that the support of $\rho$ is located outside of a ball.}
    \label{fig:mu_1_density_constraint}
\end{figure}

\subsection{An interesting case : $\mu_1$ in the whole sphere.}

We now consider the case where $\rho$ is allowed to fill the whole sphere. One first observation is that the optimal eigenvalue is expected to be never less that $n$. Indeed, this eigenvalue is associated to constant densities hence we can chose the density $\rho = \frac{m}{|\S^n|}$ which leads to $\mu_1(\rho) = \mu_1(\S^n) = n$. The simulations actually suggest that this value is only attained near $m=|\S^n|$ as shown in Figure \ref{fig:mu_1_density}.

\begin{figure}
    \centering
    \includegraphics[width=0.49\textwidth]{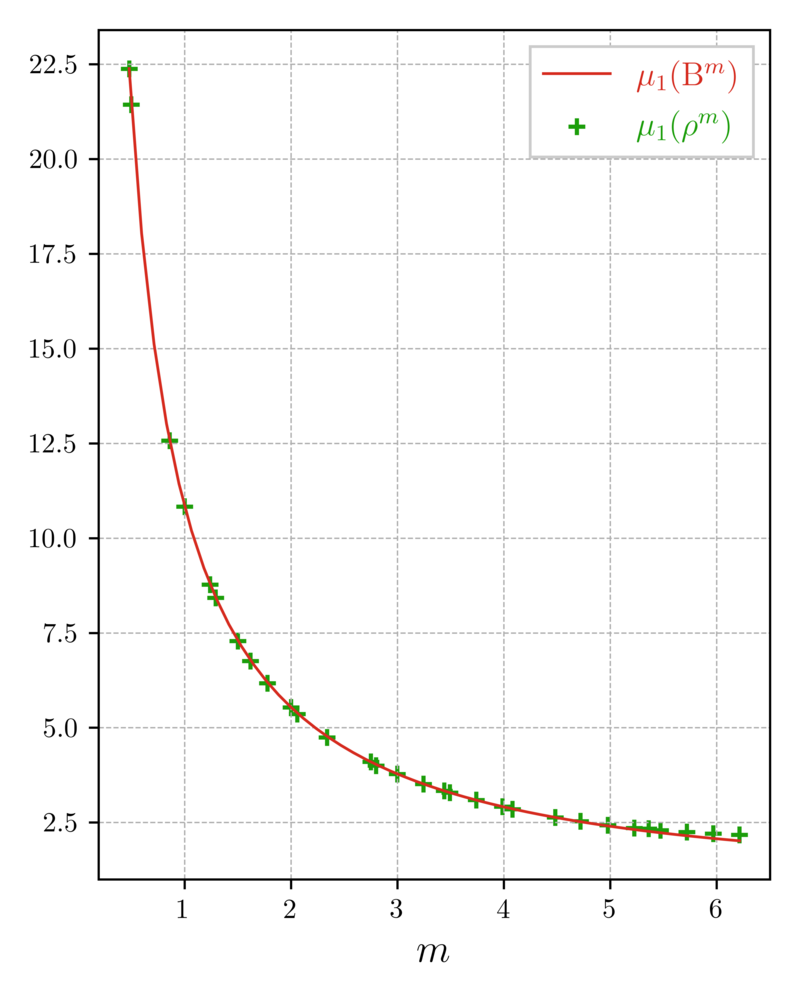}
    \includegraphics[width=0.49\textwidth]{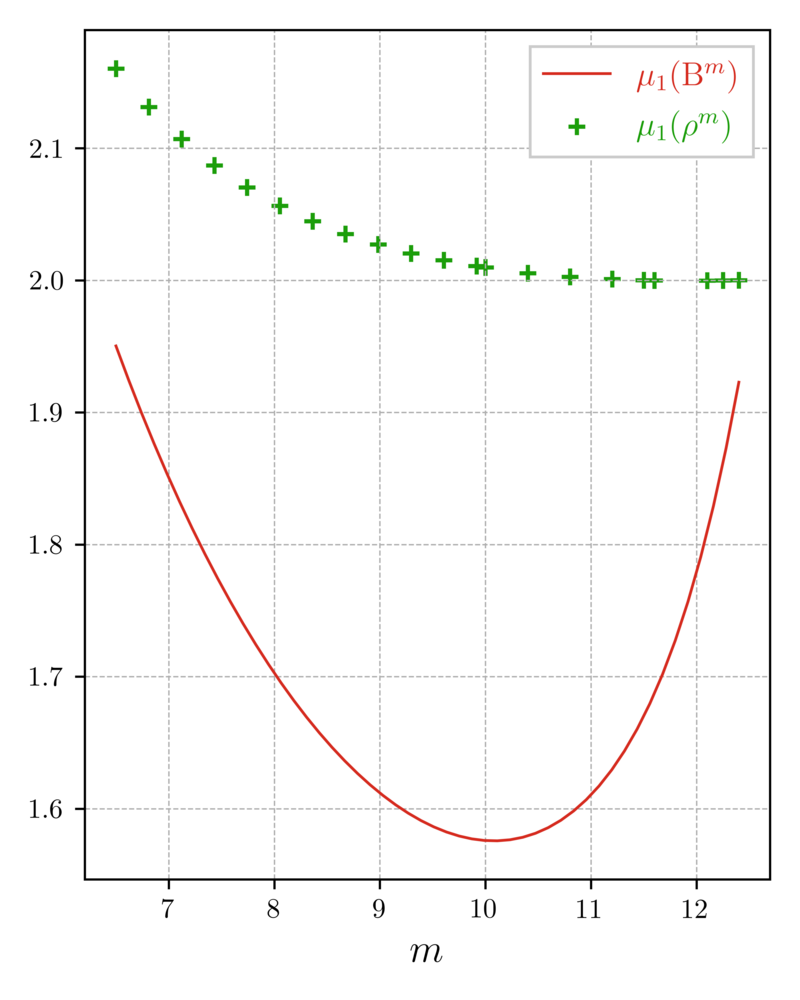}
    \caption{Optimal value of $\mu_1$ obtained by the density method.}
    \label{fig:mu_1_density}
\end{figure}

Since the eigenvalue goes to infinity as $m$ goes to $0$, the graph is displayed on two different scales for a better readability. The reader should pay a particular attention to the range of the different axes. Something interesting happens : the spherical cap seems not to be optimal for values of $m$ greater than $m \approx 4.5$. This is allowed by the fact that $\rho$ can fill the whole sphere, which wasn't allowed with the ball constraint. A zoom on the range $m \in [3.5,6.5]$ is displayed Figure \ref{fig:mu_1_density_split}.

\begin{figure}
    \centering
    \includegraphics[width=0.49\textwidth]{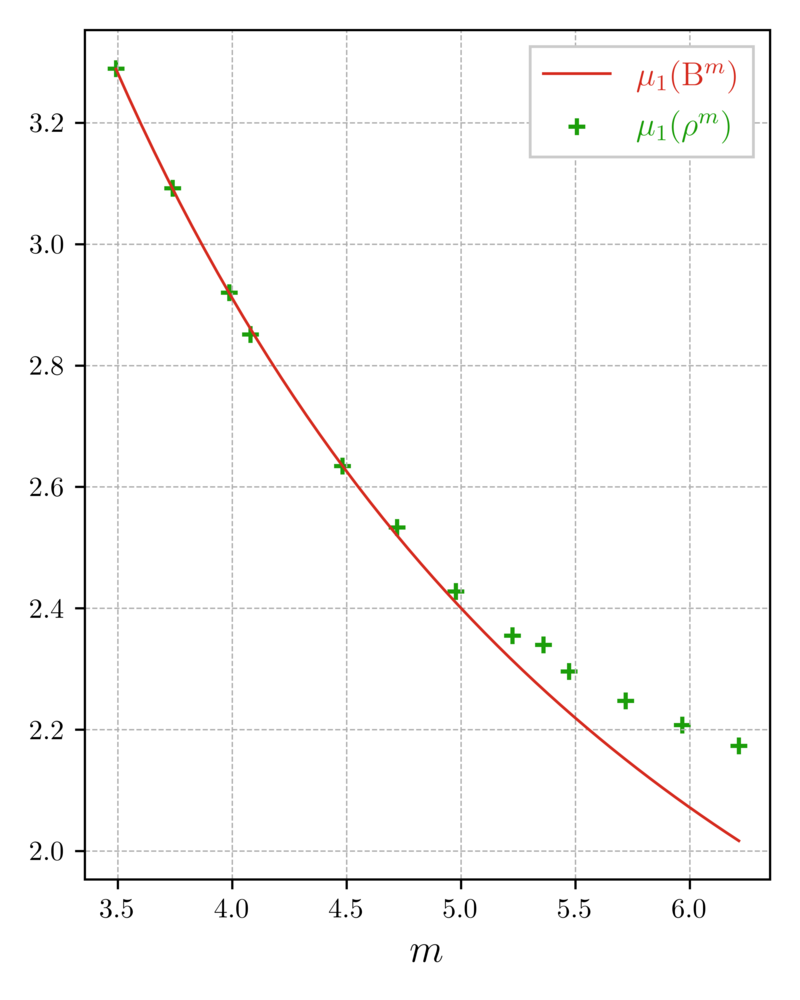}
    \caption{Optimal value of $\mu_1$ near $m\approx 5.0$.}
    \label{fig:mu_1_density_split}
\end{figure}

We illustrate the behaviour of the optimal $\rho$ in Figure \ref{fig:mu_1_density_examples} for different values of $m$. Deep blue color corresponds to $\rho = 0$ while red color corresponds to $\rho = 1$.

\begin{remark}
Apart from $m=2.0$ which seems to be the characteristic function of a geodesic ball, the optimal density seems to be some "homogenized" spherical cap. This surprising result has an important theoretical implication : even if the ball $B^m$ were optimal for $\mu_1$ among shapes $\Omega$ such that $|\Omega| = m \leq |\S^n|/2$,  it would be impossible to prove it using the standard mass transplantation technique of Weinberger \cite{weinberger_isoperimetric_1956}. Indeed, the proof would also hold for densities, but the numerical results strongly indicates that it is false for $m>0.5$. To go further, it would be interesting to investigate if this kind of "homogenization" of the sphere could be attained by some sequence of actual domains. This could for instance suggest the non-existence of optimal domains for a large enough $m$.
\end{remark}

The inspection of these results leads to the following conjecture :

\begin{conjecture}Let $m\in(0,|\S^n|)$. Then the optimal density $\rho^m$ of the problem
\begin{equation*}
  \max \left\{ \mu_1(\rho) :  \rho : \S^n \rightarrow [0,1], \int_{\S^n}\rho =m\right\}.
\end{equation*}
is axially symmetric.
\end{conjecture}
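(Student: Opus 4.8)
\emph{Sketch of a proof strategy.} The expected phenomenon is a \emph{partial} symmetry breaking: the rotation group $O(n+1)$ of problem \eqref{density_problem} degenerates, at a maximizer, to the stabilizer of an axis. The standard device for capturing precisely this kind of residual symmetry of optimizers is two-point rearrangement (polarization), so the plan is to reduce the conjecture to a polarization inequality for $\mu_1(\cdot)$. By Theorem~\ref{th:existence} the set of maximizers is non-empty, weak-$*$ closed (by the upper semicontinuity shown there) and $O(n+1)$-invariant; fix one maximizer $\rho$. One first locates an axis: for $m<|\S^n|$ a maximizing density should not be invariant under the antipodal map (this needs a separate argument, e.g.\ a comparison showing that splitting mass into antipodal pieces is suboptimal for $\mu_1$), hence its barycentre $\int_{\S^n} x\,\rho(x)\,\mathrm{d}\mathscr H^n(x)\in\R^{n+1}$ is nonzero; let $e$ be the corresponding unit vector. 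For an open half-space $\Sigma\subset\R^{n+1}$ with $0\in\partial\Sigma$, let $\sigma_\Sigma$ be the reflection across $\partial\Sigma$ (it maps $\S^n$ to itself) and let $\rho^\Sigma$ be the polarization of $\rho$: on $\Sigma\cap\S^n$ one keeps $\max\{\rho(x),\rho(\sigma_\Sigma x)\}$ and on the opposite half the minimum. Then $\rho^\Sigma$ is again admissible, $[0,1]$-valued and of the same mass.

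The argument would then run in three steps. \textbf{Step 1 (polarization inequality).} Prove that $\mu_1(\rho^\Sigma)\ge\mu_1(\rho)$ for every such $\Sigma$, with the rigidity that equality forces $\rho^\Sigma\in\{\rho,\rho\circ\sigma_\Sigma\}$. Since $\rho$ is a maximizer and $\rho^\Sigma$ is admissible, the reverse inequality is automatic, so Step~1 shows that $\rho$ equals its polarization, in one orientation or the other, with respect to every $\Sigma$. \textbf{Step 2 (orientation).} Using that the family of half-spaces $\Sigma$ with $e\in\overline\Sigma$ is connected and that $\sigma_\Sigma$ fixes $e$ when $e\in\partial\Sigma$, exclude the ``wrong orientation'' for $\Sigma$'s whose boundary nearly contains $e$, and propagate to obtain $\rho=\rho^\Sigma$ for every $\Sigma$ with $e\in\Sigma$. \textbf{Step 3 (polarization implies symmetry).} Invoke the characterization (the spherical analogue of the Brock--Solynin / Van Schaftingen criterion) that a function on $\S^n$ equal to its polarization with respect to every half-space containing a fixed point $e$ is a geodesically radial, nonincreasing function centred at $e$; in particular $\rho$ is invariant under all rotations fixing $\R e$, i.e.\ axially symmetric. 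As a bonus this also reproduces the ``homogenized cap'' monotonicity seen numerically.

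The whole difficulty is concentrated in Step~1, and this is exactly where the problem departs from textbook polarization: here $\rho$ is not a fixed background weight but the optimization variable, so the usual move --- polarize the eigenfunction $u$ \emph{and} the weight simultaneously and compare the two Rayleigh quotients --- breaks down, because across $\partial\Sigma$ the rearrangement sorts the two values of $\rho$ in one way and the two values of $|\nabla u|^2-\mu_1 u^2$ possibly in the opposite way, so the cross terms have no definite sign. Proving Step~1 would require either a subtler transplantation (polarize $\rho$ only, and build the competitor test function for $\rho$ out of the $\rho^\Sigma$-eigenspace so as to control those cross terms) or a different mechanism altogether. A natural alternative is to drop rearrangement and run the \emph{method of moving planes} directly on the optimality system: differentiating $\mu_1$ at the maximizer (the first eigenvalue having, numerically, multiplicity two) yields an overdetermined elliptic system for $\rho$ and the first eigenspace --- a quadratic form in the eigenfunctions is constant on the ``homogenized'' region $\{0<\rho<1\}$, with one-sided inequalities on $\{\rho=0\}$ and $\{\rho=1\}$ --- to which one compares $\rho$ with its reflection across a moving hyperplane through $e$. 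The obstacles there are (i) promoting the a priori merely $\L^\infty$ regularity of $\rho$, and the correspondingly weak regularity of the eigenfunctions near the free boundaries $\partial\{\rho=1\}$ and $\partial\{\rho=0\}$, to something a maximum-principle argument can use, and (ii) the two-fold multiplicity, which replaces the single positive eigenfunction of the classical scheme by an entire eigenspace, so that the monotonicity input must be upgraded to a matrix statement. Finally, any of these routes seems to deliver only that \emph{some} maximizer is axially symmetric; upgrading this to \emph{every} maximizer would additionally require a uniqueness-up-to-rotation statement, which looks at least as hard as the conjecture itself.
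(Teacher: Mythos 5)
The statement you are tackling is left as an open conjecture in the paper: it is supported only by the numerical observation that the computed maximizers of problem \eqref{density_problem} appear axially symmetric (this is what motivates the subsequent reduction to a 1D problem in latitude), and no proof is offered. So there is nothing in the paper to compare your argument against; it has to stand on its own, and as written it is a research programme rather than a proof. By your own account the entire content sits in Step~1, the polarization inequality $\mu_1(\rho^\Sigma)\ge\mu_1(\rho)$ together with its equality rigidity, and neither is established. This is a genuine gap, not a routine verification: $\rho$ is the optimization variable and enters both quadratic forms of the Rayleigh quotient, the eigenvalue is a min--max over two-dimensional subspaces and is expected to have multiplicity two at the optimum, and the maximization direction is the ``wrong'' one for standard two-point rearrangement, exactly because of the indefinite cross terms you identify. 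The paper's own remark on Weinberger's mass transplantation should be read as a warning here: an argument that transfers verbatim from domains to densities proves too much, since the numerics indicate the geodesic cap is not density-optimal for large $m$; any polarization mechanism would have to be checked against that obstruction.

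The auxiliary steps are also not closed. Locating the axis via the barycentre rests on the unproven claim that a maximizer cannot be antipodally symmetric (and a vanishing barycentre is not in itself incompatible with axial symmetry, e.g.\ for band-like densities, so this exclusion genuinely needs an argument); Step~2 presupposes the rigidity statement of Step~1; Step~3 is standard only once Steps~1--2 are available; and, as you correctly note, even a successful run of the programme yields axial symmetry of \emph{some} maximizer, whereas the conjecture as stated concerns the optimal density $\rho^m$ and would require a uniqueness-up-to-rotation statement or a reformulation. The moving-planes alternative on the optimality system faces the regularity and multiplicity issues you list and is likewise not carried out. In short, beyond the soft facts (existence from Theorem~\ref{th:existence}, admissibility of the polarized competitor), no step is actually proved, so the conjecture remains open; your outline is a sensible starting point but does not settle it.
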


In the light of this conjecture, we illustrate on the same Figure \ref{fig:mu_1_density_examples} the density $\rho$ as the result of the 1D optimization procedure.

\begin{figure}
    \centering
    \includegraphics[width=0.24\textwidth]{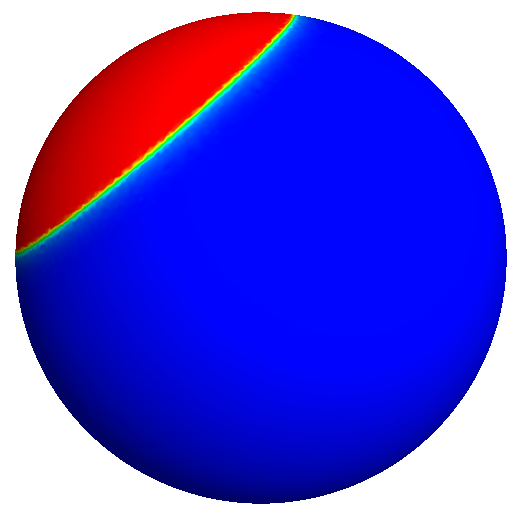}
    \includegraphics[width=0.24\textwidth]{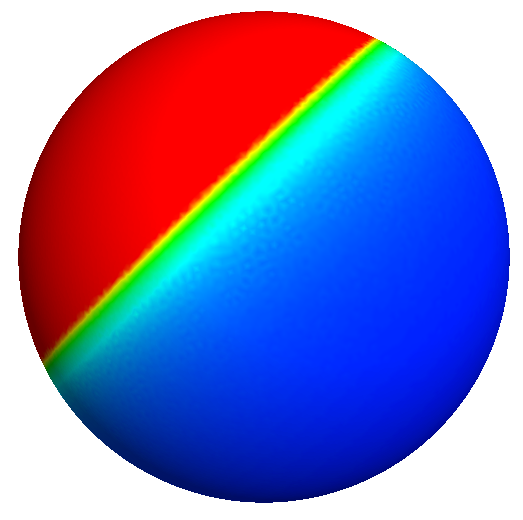}
    \includegraphics[width=0.24\textwidth]{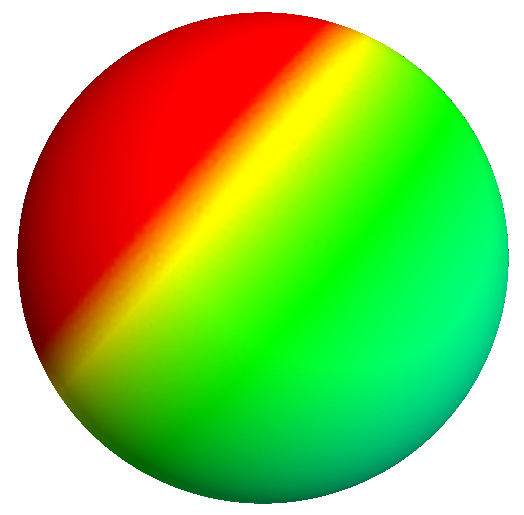}
    \includegraphics[width=0.24\textwidth]{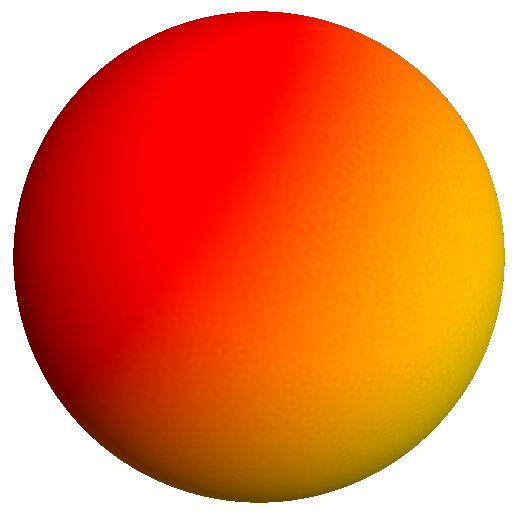}

    \includegraphics[width=0.24\textwidth]{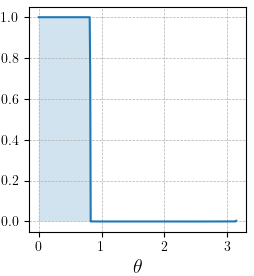}
    \includegraphics[width=0.24\textwidth]{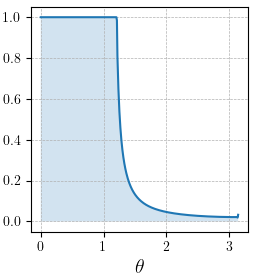}
    \includegraphics[width=0.24\textwidth]{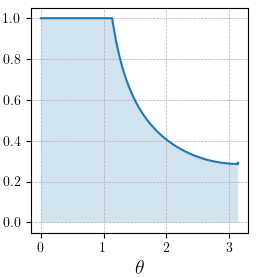}
    \includegraphics[width=0.24\textwidth]{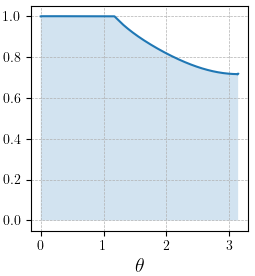}
    \caption{Example of optimal densities for $\mu_1$ for $m \in \{2.0, 4.98, 8.05, 11.2\}$ (top) along with their latitudinal profile (bottom).}
    \label{fig:mu_1_density_examples}
\end{figure}

Note that it would be interesting to get more information on the behaviour of the density near $m=4.5$ where it seems to start to homogenize. A good indication that the optimal density is an actual domain would be that the size of the set $\rho^m \notin \{0,1\}$ is always proportionnal to the size of an element under mesh refinement. On the contrary, if $\rho^m$ is a density, then the size of this set should be independant of the size of one element. Let $N$ be the number of elements of the segment $[0,\pi]$. For $N \in \{100,200,400,800\}$, we compute the quantity
\[
  h_m(N) = \frac{N}{\pi} \int_0^\pi  \rho^m(1-\rho^m).
\]
Since $\pi/N$ is the size of one element, this quantity should be constant in $N$ if $\rho^m$ is a domain for the reasons stated previously. If $\rho^m$ is not a domain, then we would expect $h_m(N)$ to double when doubling the number of points in the mesh.
To compare this quantity for different $N$ we normalize this quantity at $N=100$ and define
\[
  \mbox{Dispertion}_m(N) = \frac{h_m(N)}{h_m(100)}
\]
We plot the graph of $\mbox{Dispertion}_m$ for different values of $m$ in Figure \ref{fig:1d_refinement}

\begin{figure}
    \centering
    \includegraphics[width=0.8\textwidth]{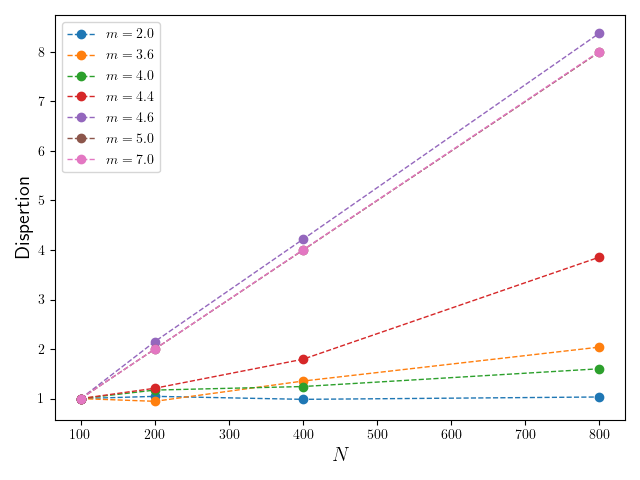}
    \caption{Mesh-refinement procedure une 1D for different values of $m$.}
    \label{fig:1d_refinement}
\end{figure}

A few things can be deduced from this graph. For $m$ large enough (approximately $m>4.6$), the behaviour is the one we would observe for a density which is not a domain, as the dispertion grows with the number of points. On the other hand, it appears that for $m$ small enough $\rho^m$ seems to be a domain since its dispertion is constant. It is the subject of the following conjecture.

\begin{conjecture}There exists $\delta > 0$ such that for all $m \in (0,\delta)$, $\rho^m = \1_{\B^m}$ i.e. the optimal density is the one of a geodesic ball.
\end{conjecture}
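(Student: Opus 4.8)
\medskip
\noindent\textbf{A proposed approach.} The plan is to combine a quantitative concentration estimate for an arbitrary maximiser $\rho^m$ (which exists by Theorem~\ref{th:existence}) with the rigidity of the Weinberger-type comparison of \cite{bucur_sharp_2022}. Throughout, $\B^m$ denotes a geodesic ball of measure $m$, so that the density $\1_{\B^m}$ is admissible and $\mu_1(\1_{\B^m})$ equals the Neumann eigenvalue $\mu_1(\B^m)$ of the cap.

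\textbf{Step 1 (lower bound and concentration).} Since $\1_{\B^m}$ is a competitor, $\mu_1(\rho^m)\ge\mu_1(\B^m)=:\Lambda(m)$, and dilating a small geodesic cap to a Euclidean ball gives $\Lambda(m)\sim c_n m^{-2/n}$ as $m\to0$ for some $c_n>0$. On the other hand, testing the variational characterisation \eqref{density_var} of $\mu_1(\rho)$ on the two-dimensional spaces $\Span(1,x_i)$, where $x_1,\dots,x_{n+1}$ are the ambient linear coordinates restricted to $\S^n$ (so that $|\nabla_{\S^n}x_i|^2=1-x_i^2$ and $\sum_i|\nabla_{\S^n}x_i|^2=n$), yields, writing $\bar f_\rho=\tfrac1m\int_{\S^n}\rho f$ and $\operatorname{Var}_\rho(f)=\int_{\S^n}\rho(f-\bar f_\rho)^2$,
\[
\sum_{i=1}^{n+1}\operatorname{Var}_\rho(x_i)\;\le\;\frac{1}{\mu_1(\rho)}\int_{\S^n}\rho\sum_i|\nabla x_i|^2\;=\;\frac{n\,m}{\mu_1(\rho)}.
\]
Since $\sum_i\operatorname{Var}_\rho(x_i)=m\bigl(1-|\bar{\bm x}_\rho|^2\bigr)=\tfrac1{2m}\iint\rho(x)\rho(y)\,|\bm x-\bm y|_{\R^{n+1}}^2$ with $\bar{\bm x}_\rho=\tfrac1m\int_{\S^n}\rho\,\bm x$, and since $|\bm x-\bm y|_{\R^{n+1}}^2\ge c_0\,d_{\S^n}(x,y)^2$ on $\S^n$, this gives $\iint\rho(x)\rho(y)\,d_{\S^n}(x,y)^2\le C_n\,m^2/\mu_1(\rho)$. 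Taking $\rho=\rho^m$, inserting $\mu_1(\rho^m)\ge\Lambda(m)\sim c_n m^{-2/n}$, and running a Chebyshev--Fubini argument, one obtains: for every $\eta>0$ there is a universal $\lambda=\lambda(n,\eta)$ such that, for all small enough $m$, there is $p_m\in\S^n$ with $\int_{\S^n\setminus B(p_m,\lambda m^{1/n})}\rho^m\le\eta m$. Thus the bulk of the mass of any maximiser collapses to a geodesic ball of radius $O(m^{1/n})\to0$.

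\textbf{Step 2 (rigidity).} Suppose one can upgrade Step~1 to $\operatorname{supp}\rho^m\subset B(p_m,r_m)$ with $r_m\to0$. For $m$ small this ball has measure $<|\S^n|-m$, and since the complement of a geodesic ball is itself a geodesic ball, $\rho^m$ vanishes on some closed geodesic cap of measure exactly $m$. The density form of the theorem of \cite{bucur_sharp_2022} recalled above then forces $\mu_1(\rho^m)\le\mu_1(\B^m)$, hence $\mu_1(\rho^m)=\mu_1(\B^m)$. Tracing the proof of that theorem — a Weinberger argument with Hersch centring — equality can only hold if equality holds in the underlying Hardy--Littlewood rearrangement inequalities for the weights $g(\theta)^2$ and $B_{\S}(\theta)=g'(\theta)^2+(n-1)g(\theta)^2/\sin^2\theta$; as these are strictly monotone on the relevant interval, this pins $\rho^m$ to the characteristic function of a geodesic ball of measure $m$, which is the claim. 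An equivalent route is to dilate $\S^n$ by $m^{-1/n}$ about $p_m$: the metrics converge to the Euclidean one in $C^\infty_{\mathrm{loc}}$, the rescaled densities converge weak-$*$ (no mass escapes to infinity, by Step~1) to a maximiser of the Euclidean density problem for $\mu_1$, and the Szeg\H{o}--Weinberger inequality for densities, together with its equality case, identifies that maximiser as a round ball; a quantitative form of this rigidity, carried back through the $O(m^{2/n})$ curvature error, then gives the claim.

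\textbf{Main obstacle.} The delicate point is bridging Step~1 and the hypothesis used in Step~2, i.e. excluding that an optimal density carries a vanishingly small but geometrically dispersed ``tail'' of mass — in particular a tail reaching close to the antipode of $p_m$, exactly where the weight $B_{\S}$ blows up and the Weinberger bound becomes vacuous. One expects this from a first-variation (Euler--Lagrange) analysis showing that, for small $m$, the optimal density is bang-bang and its free boundary has no component far from the bulk — equivalently, that transporting far-away mass toward $p_m$ strictly increases $\mu_1$. Carrying this out while accounting for the (expected) multiplicity $n$ of $\mu_1(\rho^m)$, and making the rigidity in Step~2 quantitative enough to absorb the curvature perturbation, are the two points needing genuine work; the remainder adapts routinely from the Euclidean treatment in \cite{BMO_2022} and from \cite{weinberger_isoperimetric_1956,bucur_sharp_2022}.
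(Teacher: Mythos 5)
You should first note that the statement you are addressing is a \emph{conjecture} in the paper: the author offers no proof, only numerical evidence (the 1D mesh-refinement ``dispersion'' study suggesting that for $m$ roughly below $4.6$ the optimizer is a characteristic function, and homogenization beyond that). So there is no proof of the paper's to compare with; the question is whether your proposal closes the conjecture, and it does not. Your Step~1 is fine and standard: testing \eqref{density_var} on $\Span(1,x_i)$ for the ambient coordinates gives $\mu_1(\rho^m)\sum_i\operatorname{Var}_{\rho^m}(x_i)\le nm$, and with the lower bound $\mu_1(\rho^m)\ge\mu_1(\B^m)\sim c_nm^{-2/n}$ this yields concentration of most of the mass in a geodesic ball of radius $O(m^{1/n})$. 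But the passage from ``most of the mass'' to ``all of the mass'' (your ``main obstacle'') is not a technical refinement — it is the entire content of the conjecture. The paper's own computations show that for $m\gtrsim4.5$ a dispersed, homogenized density strictly beats the cap, so spreading a tail of mass \emph{can} be advantageous; any argument excluding tails must therefore use smallness of $m$ in an essential, quantitative way, and nothing in your sketch (neither the Euler--Lagrange/bang-bang heuristic nor the transport heuristic) identifies the mechanism that switches off at small $m$. Until that step exists, Step~2 never gets to apply, because the constrained theorem of \cite{bucur_sharp_2022} requires $\rho^m$ to vanish identically on a cap of measure exactly $m$.

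Two further points in Step~2 are asserted rather than available. First, the paper records only the \emph{inequality} $\mu_1(\rho)\le\mu_1(\B^m)$ in the density form of the constrained theorem; the equality-case rigidity you invoke (strict monotonicity of the Weinberger weights plus a bathtub-principle argument forcing $\rho^m=\1_{\B^m}$) is plausible but is not stated in \cite{bucur_sharp_2022} as cited here and would have to be proved, including the choice of centring point and the possible multiplicity of $\mu_1(\rho^m)$. Second, the alternative blow-up route needs (i) a Szeg\H{o}--Weinberger theorem \emph{with equality characterization} for densities in the Euclidean setting, and (ii) a quantitative stability version strong enough to absorb the $O(m^{2/n})$ metric error and to rule out mass escaping in the rescaled limit; neither is off the shelf, and (ii) is again essentially the tail-exclusion problem in disguise. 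In short: the proposal is a reasonable research program consistent with the paper's numerics, but as a proof it has a genuine gap at precisely the point where the conjecture is hard, and the rigidity ingredients it leans on are themselves unproved in the density setting.
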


Following the numerical observations above, the value of $\delta$ would lie between $3.5$ and $4.6$.

\subsection{Results for $\mu_2$}

\begin{figure}
    \centering
    \includegraphics[width=0.49\textwidth]{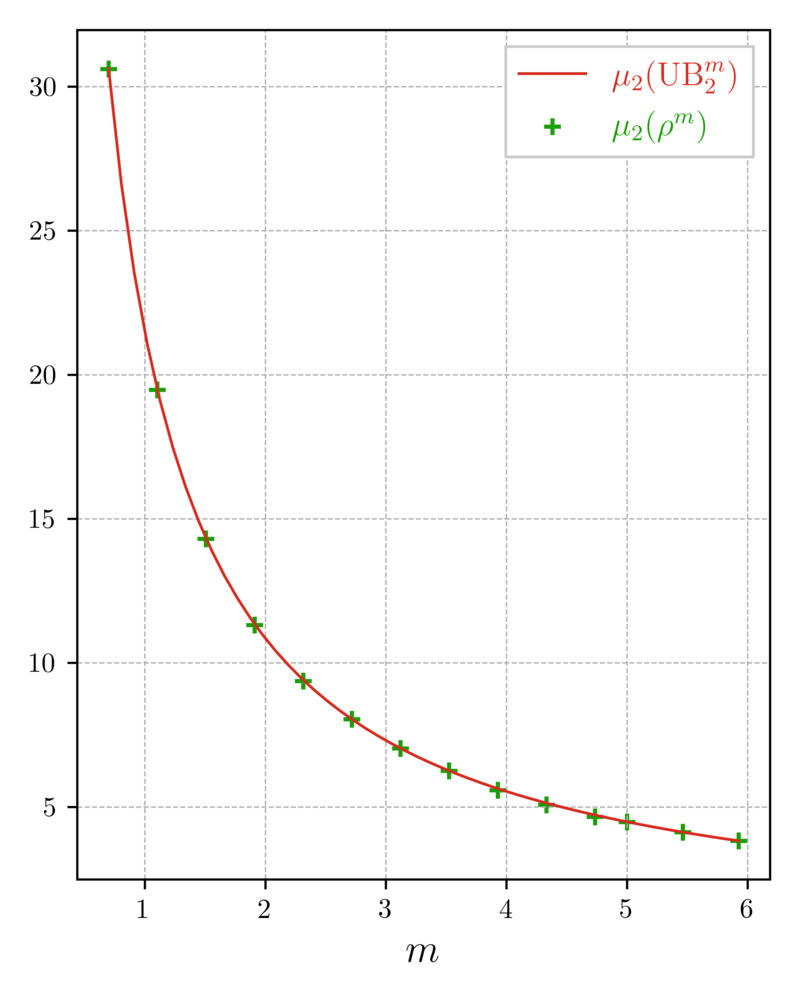}
    \includegraphics[width=0.49\textwidth]{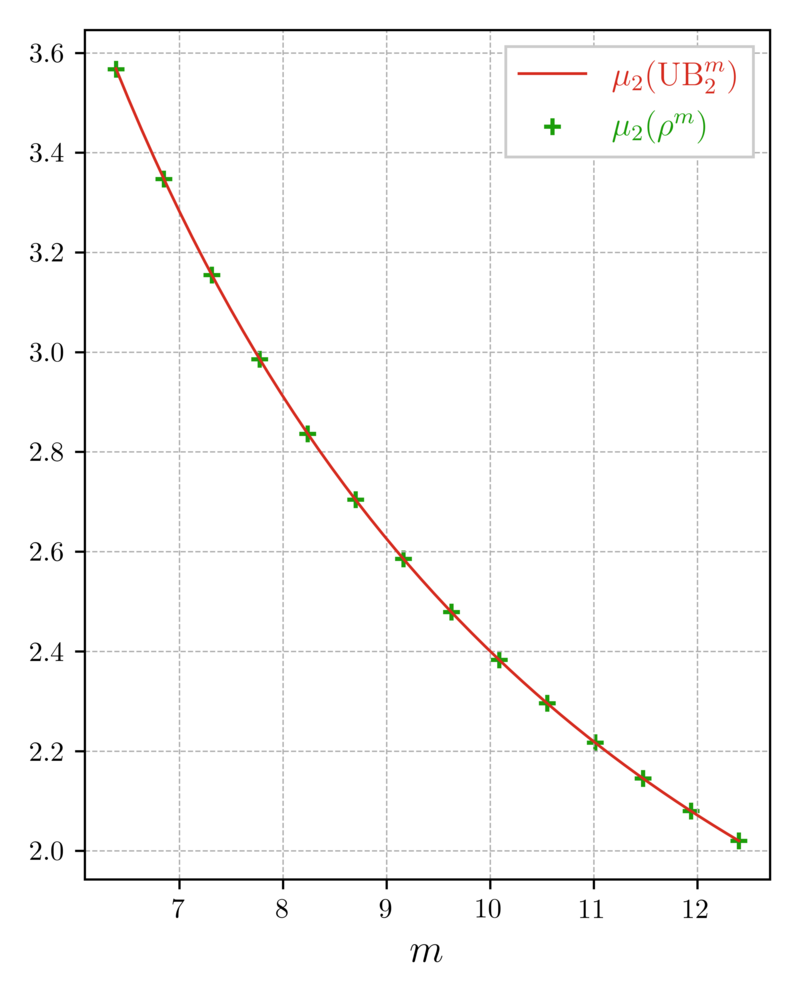}
    \caption{Optimal value of $\mu_2$ obtained by the density method.}
    \label{fig:mu_2_density}
\end{figure}

The results for $\mu_2$ are the most stable ones. Indeed, no matter the value of $m$, the corresponding optimal density is always attained by the characteristic function of the union of two balls of the same measure as can be seen in Figure ~\ref{fig:mu_2_density}. This numerical experiment gives strong insights of the validity of Theorem 2 of \cite{bucur_sharp_2022} which asserts that $\mu_2$ is maximal for the disjoint union of two balls $B^\frac{m}{2} \sqcup B^\frac{m}{2}$. Figure \ref{fig:mu_2_density_examples} shows the optimal densities that are obtained for some values of $m$.

\begin{figure}
    \centering
    \includegraphics[width=0.24\textwidth]{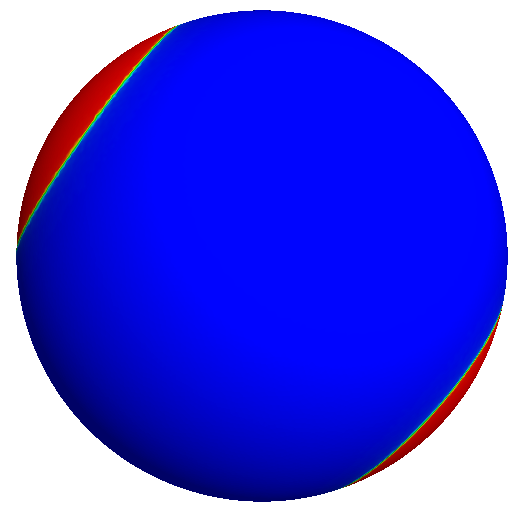}
    \includegraphics[width=0.24\textwidth]{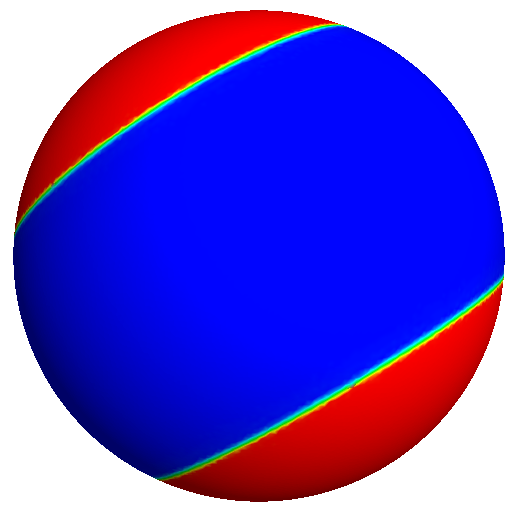}
    \includegraphics[width=0.24\textwidth]{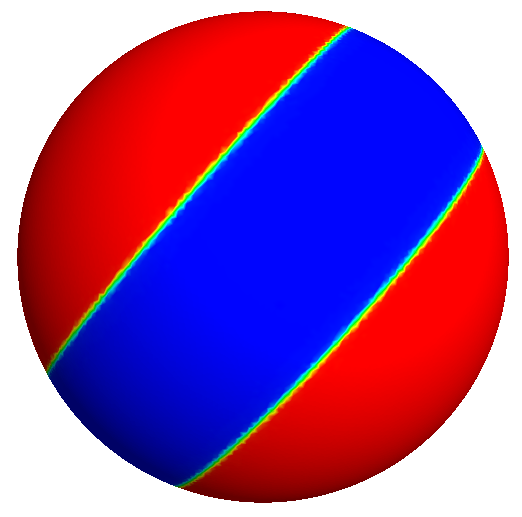}
    \includegraphics[width=0.24\textwidth]{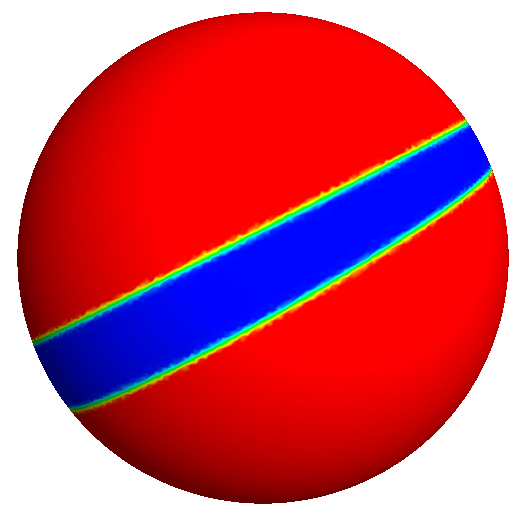}
    \caption{Example of optimal densities for $\mu_2$ for $m \in \{2.31, 5.46, 8.23, 11.01\}$.}
    \label{fig:mu_2_density_examples}
\end{figure}

\subsection{Results for $\mu_3$}

\begin{figure}
    \centering
    \includegraphics[width=0.49\textwidth]{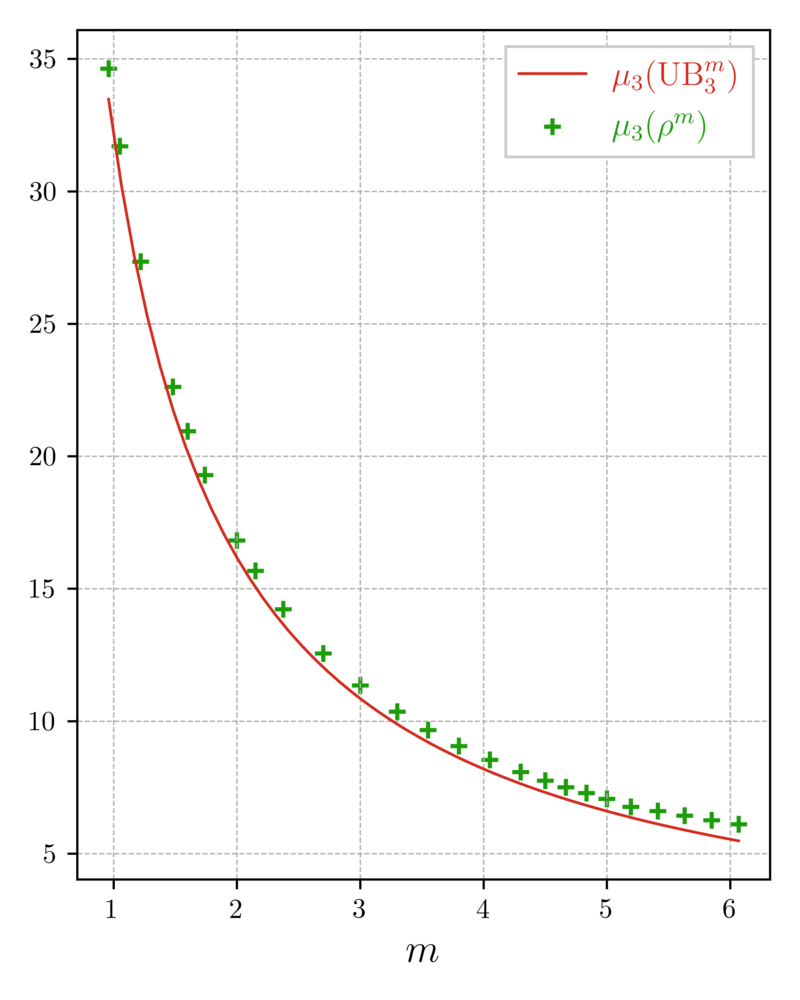}
    \includegraphics[width=0.49\textwidth]{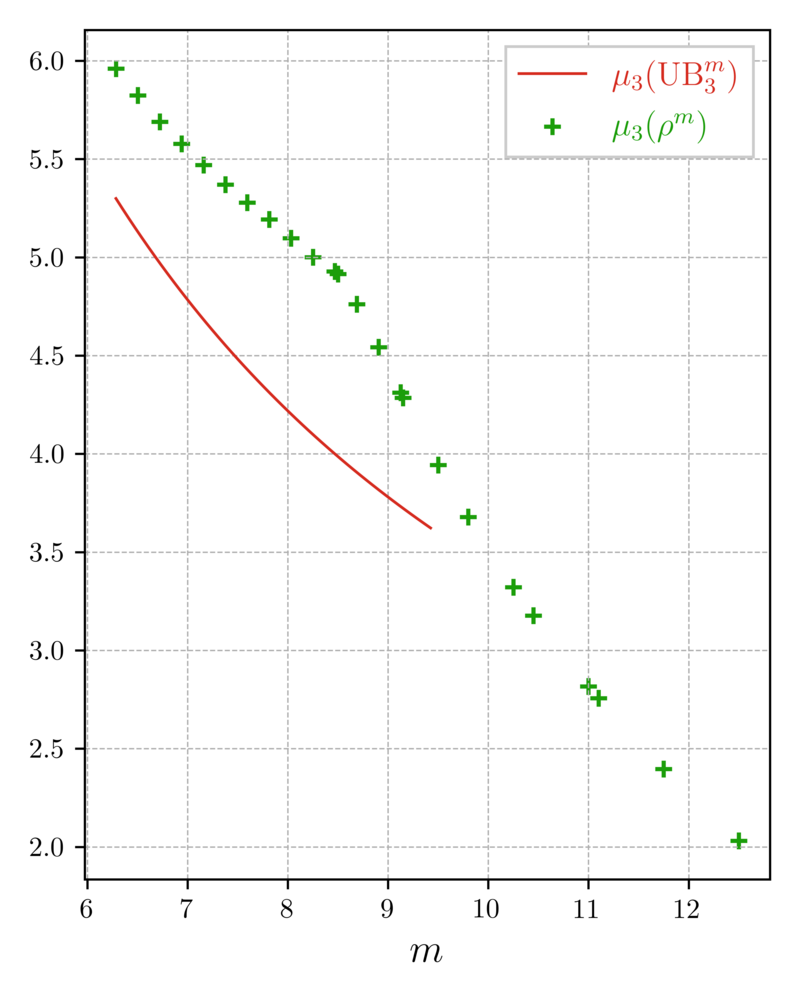}
    \caption{Optimal value of $\mu_3$ obtained by the density method.}
    \label{fig:mu_3_density}
\end{figure}

As for the case of $\mu_1$, the optimization procedure for $\mu_3$ exhibits various different behaviours depending on the value of $m$. Note that the union of three balls of the same surface area seems to never be optimal, as shown in Figure \ref{fig:mu_3_density}.

\begin{figure}
    \centering
    \includegraphics[width=0.24\textwidth]{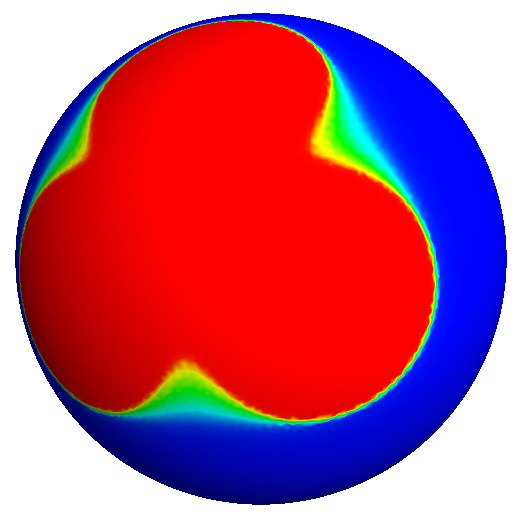}
    \includegraphics[width=0.24\textwidth]{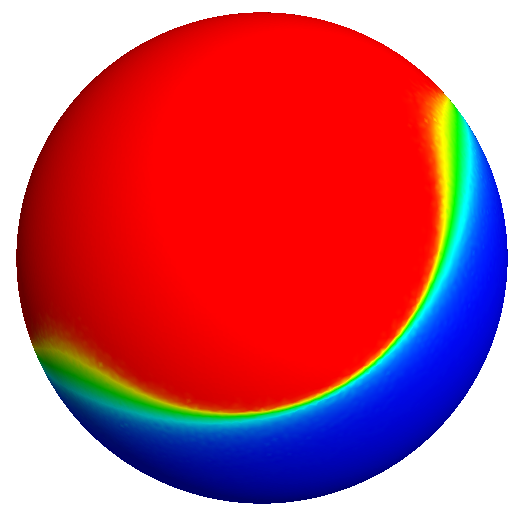}
    \includegraphics[width=0.24\textwidth]{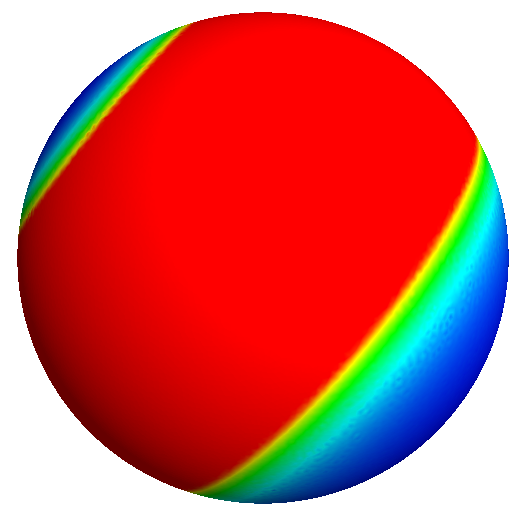}
    \includegraphics[width=0.24\textwidth]{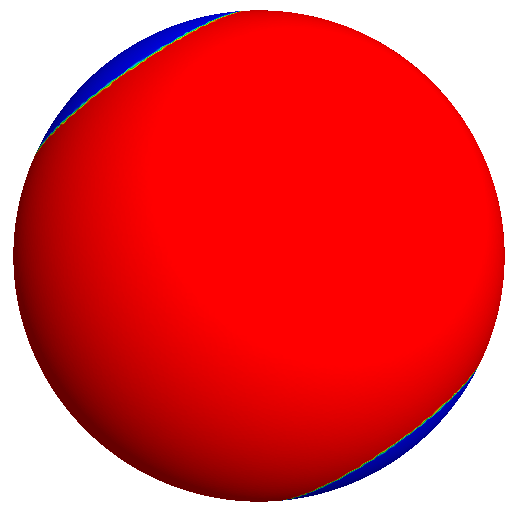}
    \caption{Example of optimal densities for $\mu_3$ for $m \in \{2.0, 5.0, 8.03, 11.0\}$.}
    \label{fig:mu_3_density_examples}
\end{figure}

In Figure \ref{fig:mu_3_density_examples} are displayed the different types of densities that can be obtained with the density method.  As it could be expected, for small $m$ we get the same type of result than in the plane \cite{BMO_2022}. Only the last, for large $m$, seems to be an actual characteristic function of some "napkin ring"-shaped domain. For $m \approx 8.0$, we get some homogenized geodesic annulus. On the two-dimensional sphere it is expected to see that for large $m$ the eigenvalue goes to $2$ since $\mu_1(\S^2)$ is of multiplicity $3$ and $\mu_1(\S^2)=2$.

\subsection{Data.}

All the final solutions are available in MEDIT format at \url{https://github.com/EloiMartinet/Neumann_Sphere/}. A FreeFem++ \cite{MR3043640} script allowing to read the solutions and compute the eigenvalues is also provided for replicability purposes. See the README file for more information.

\section{Level-set method}

In this section we focus on the optimization of the original problem (\ref{domain_problem}) through the level-set method. This allows to give more informations for the original shape optimization problem, when optima lead by the previous density method didn't match the characteristic function of an actual shape.
The level-set method consists in representing the domain $\Omega$ as the level-set of a function has been extensively used for shape optimization, either for compliance or eigenvalue optimization (see for example \cite{de_gournay_velocity_2006}, \cite{allaire_level-set_2005}, \cite{allaire_shape_2014} among others). In the line of \cite{allaire_shape_2014} and in order to fix the notations, we recall main ideas of the level set approach.

Let $t\in[0,T]$ and $\Omega(t) \subset \S^n$ be a domain evolving in time according to a velocity field $V : [0,T]\times\S^n \to \mathbb{T}\S^n$. More precisely, if $\Omega_0$ is a domain in $\S^n$ and $\chi$ is the flow of $V$ defined as the solution of the differential equation
\begin{equation*}
    \begin{cases}
    \chi'(x_0,t)= V(t,\chi(x_0,t)) \\
    \chi(x_0,0)= x_0 \in \Omega_0
    \end{cases}
\end{equation*}
then we can define
\[
\Omega(t) := \chi(\Omega_0,t).
\]

In the framework of the level-set method, we represent our domain by a function $\phi : [0,T] \times \S^N \to \R$ such that
\begin{equation}
    \forall x \in \S^N, \forall t\in[0,T],
    \begin{cases}
    \phi(t,x) < 0 & \mbox{ if } x \in \Omega(t) \\
    \phi(t, x) = 0 & \mbox{ if } x \in \partial \Omega(t) \\
    \phi(t,x) > 0 & \mbox{ if } x \in \S^N \sm \Omega(t)
    \end{cases}.
\end{equation}

The motion of $\Omega(t)$ is equivalent to the advection of $\phi$ by the equation
\begin{equation}
    \partial_t \phi(t,x) + V(t,x) \cdot \nabla \phi(t,x) = 0 \mbox{ on } (0,T)\times \S^n.
\end{equation}

Observing that $n_{\Omega(t)}:=\frac{\nabla \phi(t,.)}{|\nabla \phi(t,.)|}$ is an extension of the unitary outward normal of $\Omega(t)$ for all $t$ and if $V$ is of the kind $V(t,x) = v(t,x)n_{\Omega(t)}(x)$ with $v(t,x)\in \R$, we can re-write the previous equation as

\begin{equation}
     \partial_t \phi(t,x) + v(t,x) |\nabla \phi(t,x)| = 0 \mbox{ on } (0,T)\times \S^n.
\end{equation}

In our case, $[0,T]$ represents a single timestep and therefore $T$ is small enough to consider that $v(t,x) \approx v(x)$ for all $t \in [0,T]$ which finally leads to the so called \textit{Hamilton-Jacobi} equation :

\begin{equation}
    \label{eqn:advection}
     \partial_t \phi(t,x) + v(x) |\nabla \phi(t,x)| = 0 \mbox{ on } (0,T)\times \S^n.
\end{equation}

Numerically, this equation is solved by the method of characteristics, thanks to the \textit{"advect"} toolbox that can be found at \url{https://github.com/ISCDtoolbox/Advection/}. This method supports $\P_1$ functions defined on surface meshes. Many thanks to the authors for their precious work.

\subsection{Shape derivative}

At each time step, the purpose is to find some vector field $v$ such that advecting $\phi$ by (\ref{eqn:advection}) on this small time step increase the considered eigenvalue. In order to address this issue we need to compute the shape derivative of an eigenvalue of a domain of $\S^n$. This shape derivative is given by the following result (see for instance \cite{ZANGER200139}):

\begin{theorem}[Shape derivative] We assume that $\Omega$ is $C^1$ with non-empty boundary. Let $k \in \N$ and $V : \S^N \to \mathbb{T}\S^n$ be a smooth vector field with compact support in the neighborhood of $\Omega_0$. We denote
\[
\mu_k'(\Omega_0, V) := \lim_{t \to 0^+} \frac{\mu_k(\Omega(t))-\mu_k(\Omega_0)}{t}.
\]
Moreover we assume that $\mu_k(\Omega_0)$ is simple. Then this limit exists and
\begin{equation}
    \label{eqn:shape_derivative}
    \mu_k'(\Omega_0, V) = \int_{\partial \Omega_0} \left(|\nabla u|^2 - \mu_k(\Omega_0) u^2\right)(V.n) \dd \sigma
\end{equation}
with $u$ an eigenfunction associated to $\mu_k(\Omega_0)$ with unitary $\L^2$ norm and $n$ the outward normal.
\end{theorem}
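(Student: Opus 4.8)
The plan is to use the classical Hadamard/Lagrangian approach, transporting the eigenvalue problem on the moving domain $\Omega(t)=\chi(\Omega_0,t)$ back to the fixed domain $\Omega_0$ via the diffeomorphism $\chi(\cdot,t)$, then differentiating the resulting family of eigenvalue problems with coefficients depending smoothly on $t$. First I would fix a $C^1$ vector field $V$ tangent to $\S^n$ with compact support near $\Omega_0$, and note that since $\Omega_0$ is $C^1$ with non-empty boundary and $\mu_k(\Omega_0)$ is simple, the implicit function theorem (applied to the normalized eigenpair equation) guarantees that for $t$ small there is a unique simple eigenvalue $\mu_k(\Omega(t))$ near $\mu_k(\Omega_0)$, with eigenfunction $u_t\in H^1(\Omega(t))$, $\|u_t\|_{L^2(\Omega(t))}=1$, and the maps $t\mapsto \mu_k(\Omega(t))$, $t\mapsto u_t\circ\chi(\cdot,t)\in H^1(\Omega_0)$ are differentiable at $t=0$. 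This is where I would invoke \cite{ZANGER200139} or an analogous reference for the structure of the manifold; the sphere adds no essential difficulty here since everything is local near $\partial\Omega_0$ and one works in local charts with the induced metric.

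Next I would write the weak formulation on $\Omega(t)$,
\[
\int_{\Omega(t)} \nabla u_t\cdot\nabla\varphi\,\dd\sigma = \mu_k(\Omega(t))\int_{\Omega(t)} u_t\,\varphi\,\dd\sigma,
\]
pull it back to $\Omega_0$ by the change of variables $y=\chi(x,t)$, so that the metric tensor, the volume element, and the gradient each pick up a factor depending smoothly on $t$ (with $t=0$ values equal to the identity and the derivative at $t=0$ expressible through $\mathrm{D}V$ and $\div V$). Differentiating this identity at $t=0$, testing against $\varphi=u:=u_0$, and using the eigenvalue equation to cancel the terms involving the material derivative $\dot u$ (the standard Lagrangian trick: the derivative of the eigenfunction drops out because $u$ is the eigenfunction), one is left with a boundary-free volume expression for $\mu_k'(\Omega_0,V)$ involving $\mathrm{D}V$, $\div V$, $\nabla u$ and $u$. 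The normalization $\int_{\Omega(t)}u_t^2=1$ differentiated at $t=0$ supplies the relation needed to handle the term coming from the right-hand side.

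Then I would convert this volume formula into the boundary integral \eqref{eqn:shape_derivative}. The key computation is the transport/Reynolds-type identity: for the quadratic quantity $|\nabla u|^2-\mu_k u^2$, the bulk terms involving $\div V$ and $\mathrm{D}V$ reorganize, after an integration by parts using $-\Delta u=\mu_k u$ in $\Omega_0$ and $\partial_n u=0$ on $\partial\Omega_0$, into $\int_{\partial\Omega_0}(|\nabla u|^2-\mu_k u^2)(V\cdot n)\,\dd\sigma$. On the sphere this integration by parts is the Riemannian divergence theorem, and the curvature terms that a priori appear from commuting covariant derivatives cancel because only the tangential part of $V$ along $\partial\Omega_0$ and the normal trace $V\cdot n$ survive; in fact the final formula depends on $V$ only through $V\cdot n$ on $\partial\Omega_0$, as it must by the Hadamard structure theorem.

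The main obstacle I expect is the rigorous justification of the differentiability of $t\mapsto u_t$ in $H^1$ and of $t\mapsto\mu_k(\Omega(t))$ — i.e., the regularity needed to legitimately differentiate the weak formulation under the integral sign — rather than the algebra of the final reduction, which is routine once differentiability is in hand. For a $C^1$ domain with a simple eigenvalue this is standard but does require care (elliptic regularity giving $u\in H^2$ near the boundary so that $\nabla u$ has an $L^2$ trace on $\partial\Omega_0$, and a fixed-point/implicit-function argument for the eigenpair); since the paper only needs the formula to drive a numerical scheme, I would state this carefully and cite \cite{ZANGER200139} for the technical core, then present the pullback computation leading to \eqref{eqn:shape_derivative}.
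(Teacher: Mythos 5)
The paper itself offers no proof of this theorem: it is stated as a known result and simply referred to the literature (``see for instance \cite{ZANGER200139}''). Your sketch is the standard Hadamard/Lagrangian pullback argument, correct in outline, and it defers exactly the delicate technical core (differentiability of the eigenpair in $t$ and the boundary regularity needed for the trace of $|\nabla u|^2$ under the mere $C^1$ assumption) to the same reference, so it is consistent with --- and more detailed than --- what the paper does.
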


The previous theorem allows us to only consider normal variations of the boundary and from this consideration $\mu_k'(\Omega_0,V) = \mu_k'(\Omega_0,v)$ where $v = V.n$. Moreover, it shows that we can choose $v=|\nabla u|^2 - \mu_k(\Omega_0)u^2$ as a gradient direction. This, however, may not be the best choice as we see hereafter.

\subsection{Handling area constraint}

Despite being able to compute a gradient direction (\ref{eqn:shape_derivative}), we also have to fullfill the constraint $|\Omega| = m$ in the original problem (\ref{domain_problem}). We could choose to add a penalization term and maximize the function
\[
\Omega \mapsto \mu_k(\Omega) - b (|\Omega| - m')^2
\]
instead, with $b>0$ and $m'$ a parameter allowing to control for the total mass $m$. However, if we consider the sequence of geodesic balls $B(\eps)$ of radius $\eps>0$ then we can fin that
\[
\mu_k(B(\eps)) - b (|B(\eps)|-m')^2 \xrightarrow[\eps\to 0]{} +\infty
\]
which establishes that the optimum is never attained for a positive area.
However, a result by Strichartz allows to replace the unbounded quantity $\mu_k(\Omega)$ by quantity $|\Omega|^\frac{2}{n} \mu_k(\Omega)$ even if we do not have invariance by dilation as in the Euclidian case. Indeed, using formulas (3.15) and (3.16) of \cite{strichartz_estimates_1996} in the special case $n=2$, we get the following property :

\begin{proposition}
Let $\Om \subset \S^2$. Then
\begin{equation} \label{ineq:strichartz_bound}
    |\Om|\mu_k(\Om) \leq 2\pi k^2.
\end{equation}
\end{proposition}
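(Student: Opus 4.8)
The plan is to derive \eqref{ineq:strichartz_bound} from the Kröger-type control on partial sums of Neumann eigenvalues due to Strichartz, specialized to the round sphere. For a bounded (Lipschitz) domain $\Om$ in a compact homogeneous space — in particular $\Om\subset\S^2$ — formulas (3.15)--(3.16) of \cite{strichartz_estimates_1996} bound $\sum_{j=0}^{k-1}\mu_j(\Om)$ in terms of $|\Om|$ and the ambient geometry. In dimension $n=2$, where the relevant Euclidean normalizing constant is the area $\omega_2=\pi$ of the unit disc, these estimates reduce to
\begin{equation}\label{eqn:kroger_sphere}
\sum_{j=0}^{k-1}\mu_j(\Om)\ \le\ \frac{2\pi k^2}{|\Om|}.
\end{equation}
First I would carefully check that Strichartz's general geometric constants collapse to those of $\S^2$ (curvature $+1$, total area $4\pi$, $n=2$), and that the compactness of $\S^2$ introduces no unfavourable correction term — if anything the compact setting only helps.

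Given \eqref{eqn:kroger_sphere}, the Proposition is immediate: since $0=\mu_0(\Om)\le\mu_1(\Om)\le\dots\le\mu_{k-1}(\Om)$, every summand on the left of \eqref{eqn:kroger_sphere} is nonnegative, so $\mu_{k-1}(\Om)\le\sum_{j=0}^{k-1}\mu_j(\Om)$; combining with \eqref{eqn:kroger_sphere} and relabelling the index yields $|\Om|\,\mu_k(\Om)\le 2\pi k^2$, which is \eqref{ineq:strichartz_bound}.

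If one prefers a self-contained route, \eqref{eqn:kroger_sphere} can be re-proved by Kröger's averaging argument: take the Neumann eigenfunctions $u_0,\dots,u_{k-1}$ of $\Om$, test them against the spherical harmonics of $\S^2$ (the analogue of Euclidean plane waves), and integrate the resulting inequality between Rayleigh quotients over the family of harmonics, using the decomposition $L^2(\S^2)=\bigoplus_\ell \mathcal H_\ell$ with $\dim \mathcal H_\ell=2\ell+1$ and eigenvalue $\ell(\ell+1)$. The bookkeeping then reproduces the Euclidean constant $\tfrac{2}{n+2}\cdot\tfrac{4\pi^2}{(\omega_n)^{2/n}}$ to leading order, which for $n=2$ is exactly the coefficient appearing in \eqref{eqn:kroger_sphere}.

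The final deduction is trivial; the real work is in matching Strichartz's homogeneous-space constants to the concrete case $M=\S^2$, $n=2$, and in pinning down the eigenvalue-indexing convention so that the clean power $k^2$ appears on the right-hand side.
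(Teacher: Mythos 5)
Your route is the intended one --- the paper itself only invokes Strichartz's formulas (3.15)--(3.16), which in dimension $2$ amount to the Kr\"oger-type partial-sum bound you write down --- but your final step contains a genuine error. From $\sum_{j=0}^{k-1}\mu_j(\Om)\le 2\pi k^2/|\Om|$ and the nonnegativity of the summands you obtain $|\Om|\,\mu_{k-1}(\Om)\le 2\pi k^2$; ``relabelling the index'' then gives $|\Om|\,\mu_k(\Om)\le 2\pi (k+1)^2$, \emph{not} $2\pi k^2$. This off-by-one cannot be repaired, because with the paper's convention ($\mu_0=0$, $\mu_1$ the first nonzero eigenvalue) the inequality $|\Om|\,\mu_1(\Om)\le 2\pi$ is simply false: the hemisphere has $\mu_1=2$ and area $2\pi$, so the product is $4\pi$, and a small geodesic cap behaves like a planar disc, giving $|\Om|\,\mu_1\approx \pi (j'_{1,1})^2\approx 10.65>2\pi$. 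So what your argument actually proves is the correct statement $|\Om|\,\mu_k(\Om)\le 2\pi(k+1)^2$ (equivalently, $2\pi k^2$ bounds the $k$-th eigenvalue when eigenvalues are counted starting from the trivial one); the Proposition as displayed should be read, or restated, with that shift. For the purpose it serves in the paper --- keeping $J(\Omega)=|\Omega|\mu_k(\Omega)-b(|\Omega|-m')^2$ from blowing up --- the corrected constant $2\pi(k+1)^2$ is just as good.

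Apart from that, your plan is sound and matches the source: the verification that Strichartz's homogeneous-space constants collapse to the Euclidean Kr\"oger constant on $\S^2$ (via the spherical-harmonic decomposition, $\dim\mathcal H_\ell=2\ell+1$, eigenvalues $\ell(\ell+1)$, so that the first $\ell^2$ eigenvalues of $\S^2$ sum to $\ell^2(\ell^2-1)/2\le \ell^4/2$) is exactly the content of (3.15)--(3.16), and the averaging argument you sketch is Strichartz's proof mechanism. Just state precisely which index your sum runs over before passing to the top term, and adjust either the index or the right-hand side accordingly.
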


This generic bound allows to maximize the function
\begin{equation}
    \label{eqn:cost}
    J(\Omega) := |\Omega| \mu_k(\Omega) - b (|\Omega| - m')^2
\end{equation}
which prevents the function to blow-up. Then if $\Omega^*$ maximizes (\ref{eqn:cost}), it is solution of (\ref{domain_problem}) with $m=|\Omega^*|$.

There is two way to implement the level set method. The so-called ersatz material approach involves a fixed mesh where the "void" part is filled with some material with good properties. The other one involves to remesh the domain at each step according to the level-set function. While the second one is more accurate, it suffers two main drawbacks, the most obvious one being its computational cost. The second one is related to the connectivity of $\Omega(t)$: suppose that we want to optimize $\mu_k$, starting from a topologically complex domain. The level-set method allowing topological changes, it is very likely that at one point $t$, $\Omega(t)$ splits into $k+1$ connected components. Then $\mu_0(\Omega(t))= ... = \mu_k(\Omega(t)) = 0$ and the associated eigenfunctions are constant on each connected component. This leads the shape derivative to be equal to
\[
  \mu_k'(\Omega,v) = - \mu_k(\Omega) \int_{\partial \Omega} v \dd \sigma.
\]
The reader may recognise that this is proportionnal to the shape derivative of the function $\Om \to |\Om|$. This implies that the optimization process will only optimize on the volume. One the other hand, the "ersatz material" approach allows transparent topological changes and is faster than the second one since it doesn't require remeshing at each iteration. This is why we perform a first optimization using the ersatz material method and then use a remeshing approach for a final optimization of higher accuracy.

\subsection{Level-set with ersatz material}

In this section we assume the eigenvalues to be simple. According to the approximation theorem \ref{th:approximation}, we fix a small $\eps>0$ and solve the problem
\[
 -\div[(\1_\Omega+\eps)\nabla u] = \mu_k^\eps(\1_\Omega) (\1_\Omega+\eps^2) u
\]
at each step. Thanks to the above-mentioned theorem, we expect this eigenvalue to be close to the actual one for small $\eps$.
However, the function $\1_\Omega = 1_{\{\phi < 0 \}}$, as defined on the mesh, may be highly irregular. This is why we approximate it in the following way
\[
  \1_\Omega \approx \frac{1}{2}\left( 1- \frac{\phi}{\sqrt{\phi^2 + \sigma^2}} \right)
\]
with $\sigma>0$ small. This avoids degeneracy in the denominator. Similar regularizations have been used in this framework, see for instance \cite{allaire2014multi} For the same reasons, the extended normal field is approximated by
\[
n_\Omega \approx \frac{\nabla\phi}{\sqrt{|\nabla \phi|^2 + \sigma^2}}
\]

\subsection{Initialization}

It is well-known that the levelset method is prone to fall into local optima because of its sensitivity to initialization. To tackle this problem, the levelset function is initilized with a randomized trigonometric sum of the type
\begin{equation*}
  \phi(\theta, \psi) = \Re \left\{\sum_{j=0}^{p} \sum_{k=0}^q c_{j,k} \exp{i(j\theta + k\psi)}\right\}
\end{equation*}
where the $c_{i,j}$ are chosen at random and $\theta,\psi \in [0,\pi]\times[0,2\pi]$ are respectively the latitude and longitude on $\S^2$. It is expected that the larger $q$ and $p$, the more complex $\phi$ is and, by extension, $\Omega$.

\subsection{Multiple eigenvalues}

The case of multiple eigenvalues, which always occursin practice, is handled in the same way as in the density case.

\subsection{Numerical considerations}

In our simulations, we took $\eps = 10^{-4}$ and $\sigma = 10^{-5}$. Moreover, to capture the variations of $\Omega(t)$ with good accuracy and because the level-set function tends to steepen near $\partial \Omega(t)$ over the iterations, we remesh the domain thanks to the MMG library \cite{dapogny2014three} and recompute the signed distance function every $20$ iterations thanks to the mshdist tool \cite{dapogny2012computation}. The maximal size of an element is $h_{max} = 10^{-1}$ and the minimal size is $h_{min} = 10^{-3}$. Just as previously, we use $\P_1$ finite elements and the FE computations are performed in GetFEM. The optimization algorithm is a simple gradient algorithm with a fixed number of $N=600$ steps. The step size $\delta t$ is chosen such that, if $v$ is the gradient direction at a given moment then $\delta t = \frac{\gamma}{\|v\|_{\infty}}$ with $\gamma=3.10^{-2}$. The penalty term $b$ is chosen to be equal to $5$ and $m'$ takes multiple values between $0.5$ and $4\pi$. The algorithm is presented in algorithm \ref{alg:ersatz} for clarity purposes. Note that this algorithm is the one performed for a fixed $m'$. Finally, as in the density case, the optimization is performed multiple times with different initial level set functions and the best one is kept.

\begin{algorithm}
\caption{ersatz material levelset algorithm.}\label{alg:ersatz}
\begin{algorithmic}
  \Require $k > 0$ \Comment{The eigenvalue we optimize}
  \Require A mesh in MEDIT format
  \State Initialize the levelset $\phi$
  \For{$i$ from $0$ to $N$}
    \If{$N=0 \mod 20$}
        \State Remesh using MMG.
        \State Reinitialize the levelset function $\phi$.
    \EndIf
    \State Compute $\mu_k^\eps(\Omega)$ and the associated eigenfunctions using FE.
    \State Compute $v$ the maximizing direction.
    \State Advect $\phi$ during a time $\delta t$.
  \EndFor
\end{algorithmic}
\end{algorithm}

\subsection{Level-set with remeshing}

The following method is triggered once the previous one has converged, hence we don't expect major changes in topology which could be problematic as discussed before. In this procedure, we remesh the sphere  such that $\partial \Omega = \{\phi = 0\}$ is a polygonal line of the mesh at each timestep. This then allows us to extract the mesh describing $\Omega$ and solve the original eigenvalue problem on it, without having to compute the approximation $\mu_k^\eps$. But then the optimization direction $v = |\nabla u|^2 - \mu_k(\Omega)u^2$ (with $u$ an eigenfunction of $\mu_k(\Omega)$) is only defined on $\Omega$ while we need it to be defined in the whole sphere $\S^n$ in order to advect the level-set function. In this purpose we use the well-known "extension-regularization" method which allows - as its name suggests - to extend the velocity field on all $\S^n$ and regularize it at the same time \cite{de_gournay_velocity_2006}. Still assuming that the eigenvalue is simple, we see that $v \mapsto \mu_k'(\Omega_0,v)$ is a continuous linear form in $v$. Hence, we can find $w$ the unique solution to the variational problem
\begin{equation}
    \forall v \in \H^1(\S^n), \int_{\S^n} \alpha \nabla v \nabla w + vw = \dd \mu_k(\Omega_0, v)
\end{equation}
where $\alpha > 0$. Then $w$ is indeed an extension of $|\nabla u|^2 - \mu_k u^2$ on $\H^1(\S^n)$ with regularity depending on $\alpha$. Moreover, $w$ is a valid gradient direction since
\[
\dd \mu_k(\Omega_0, w) = \int_{\S^n} \alpha |\nabla w|^2 + w^2 \geq 0.
\]

\subsection{Numerical considerations}

The numerical values chosen for this second optimization are mostly the same as the previous procedure. One add the regularization parameter $\alpha=0.1$ and that $h_{max}$ is now equal to $5.10^{-2}$. The step size $\delta t_i$ is now adaptative : if at a given iteration $i$ we have that $\mu_k(\Omega_i) > \mu_k(\Omega_{i+1})$ then $\delta t_{i+1} = \delta t_{i}/2$. Otherwise $\delta t_{i+1} = 1.1 \delta t_i$. The optimization stops when $\delta t_i < 10^{-7}$ and the mesh with the best cost is kept. The pseudocode of the algorithm for a fixed $m'$ is provided in algorithm ~\ref{alg:remesh}.

\begin{algorithm}
\caption{ersatz material levelset algorithm.}\label{alg:remesh}
\begin{algorithmic}
  \Require $k > 0$ \Comment{The eigenvalue we optimize}
  \Require The mesh and optimal domain $\Omega$ obtained by the previous procedure.
  \State Initialize the levelset $\phi$ as the one of $\Omega$.
  \While{$\delta t > 10^{-7}$}
    \State Remesh using MMG.
    \State Reinitialize the levelset function $\phi$.
    \State Extract the submesh $\Omega$
    \State Compute $\mu_k(\Omega)$ and the associated eigenfunctions using FE.
    \State Compute $v$ the maximizing direction on $\partial \Omega$
    \State Extend $v$ to the whole mesh of $\S^2$ by extension-regularization
    \State Advect $\phi$ during a time $\delta t$
    \If{The cost function increased}
      \State $\delta t \gets 1.1 \delta t$
    \Else
    \State $\delta t \gets \delta t/2$
    \EndIf
  \EndWhile
\end{algorithmic}
\end{algorithm}

\section{Results : level-set method.}

We report here the optimization results for $k \in \{1,2,3\}$. We denote by $\Omega^m$ the optimum computed with the levelset procedure verifying $|\Omega^m|=m$. The optimal eigenvalues $\mu_k(|\Omega^m|)$  are plotted in green, against the corresponding surface area $m$. As for the density method, we also plot in red the eigenvalue $\mu_k\left(UB(|\Omega^m|, k)\right)$ of an union of $k$ disjoints balls of total area $m$. Since the eigenvalue goes to $\infty$ as $|\Omega^m|$ goes to $0$, we divide the plot on two parts $0<|\Omega^m| \leq 2\pi$ and $2\pi < |\Omega^m| \leq 4\pi$ for better readability.

\subsection{Optimization of $\mu_1$}

\begin{figure}
    \centering
    \includegraphics[width=0.49\textwidth]{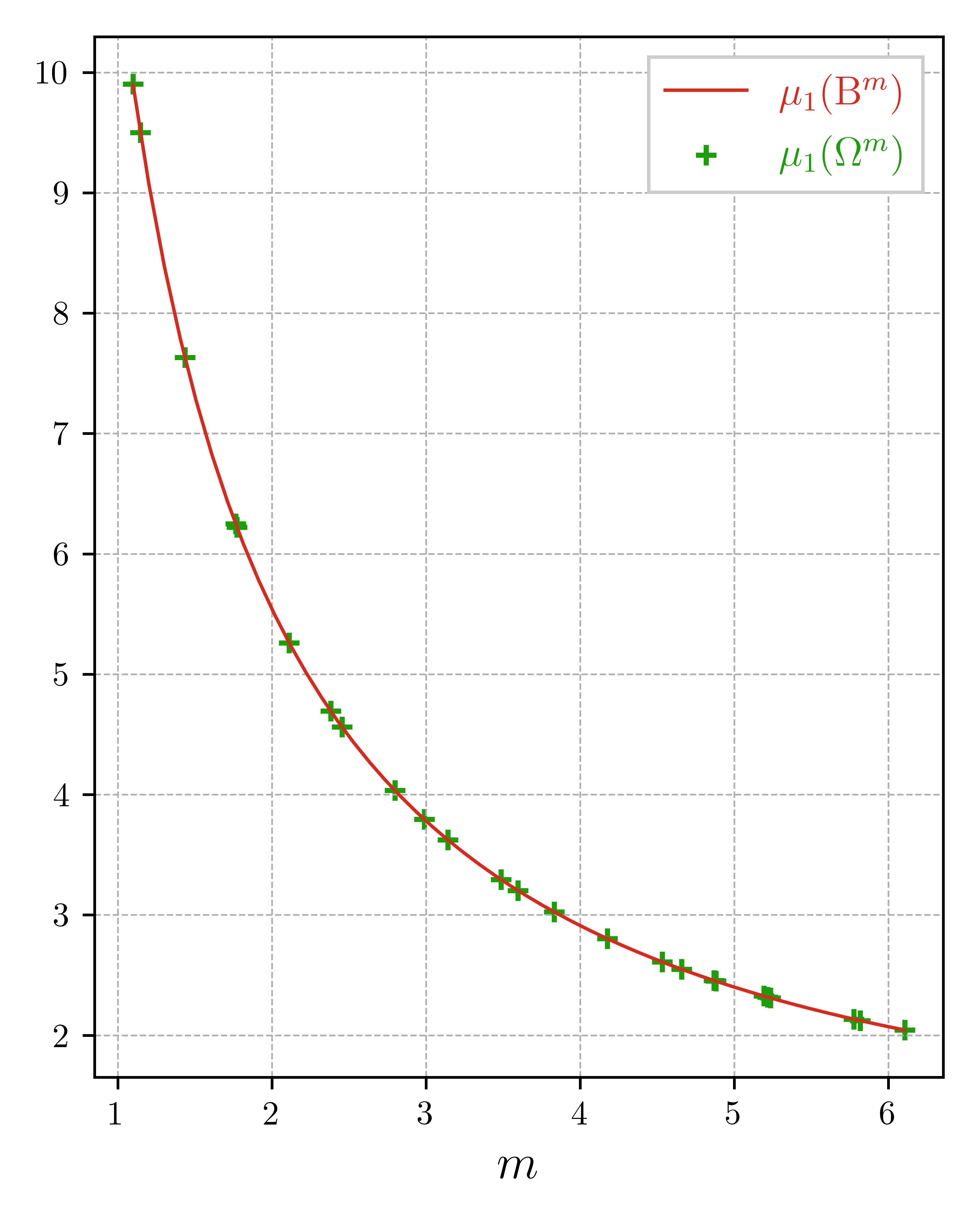}
    \includegraphics[width=0.49\textwidth]{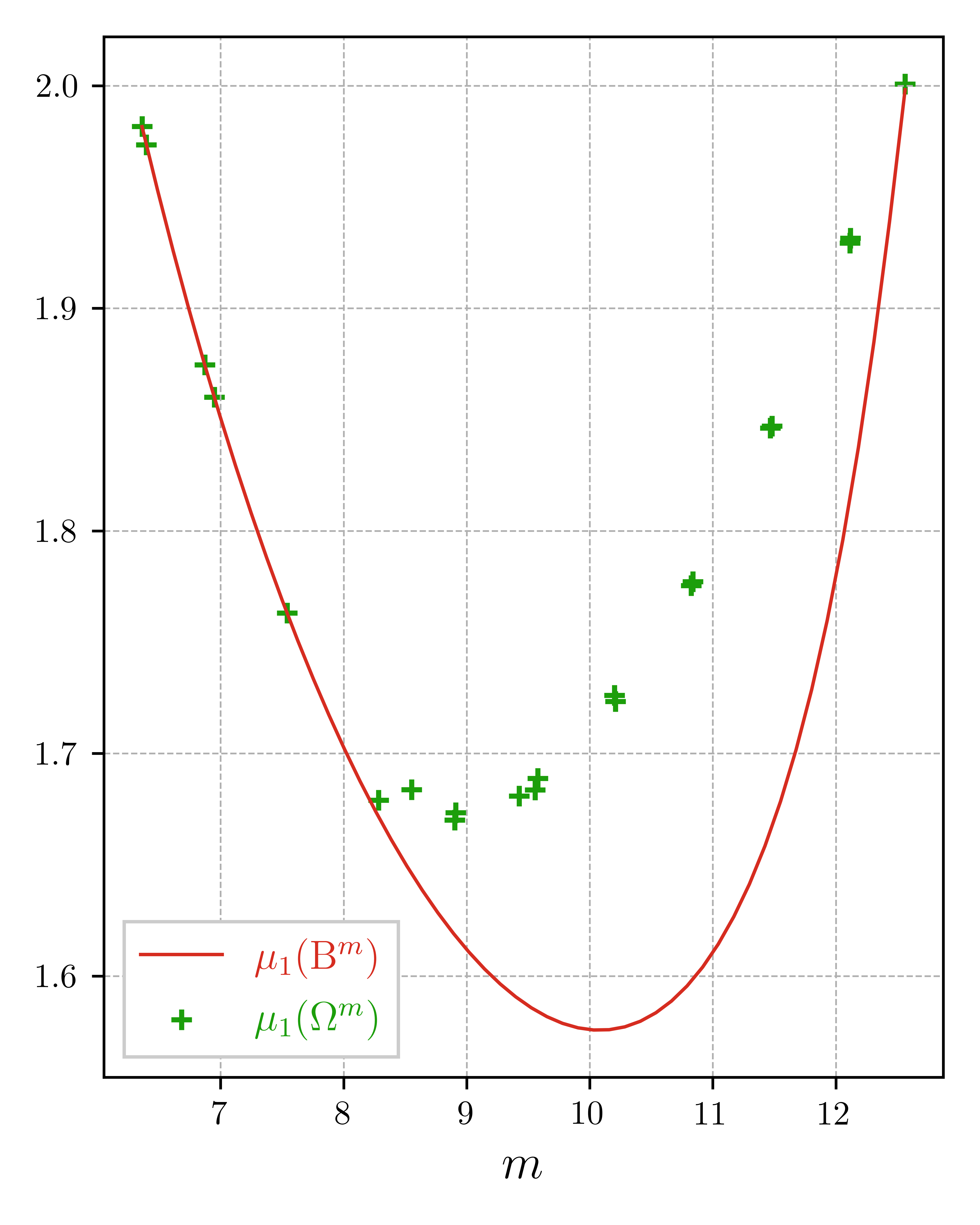}
    \caption{Optimal value of $\mu_1$ obtained by the level-set method.}
    \label{fig:mu_1_ls}
\end{figure}

In Figure \ref{fig:mu_1_ls} are displayed the results for the optimization of $\mu_1$. The spherical cap seems to be the optimal shape up to $m \approx 8.0$, after which it clearly ceases to be the optimal shape. From that point up to $m=4\pi$, complex shapes arises, consisting in a plain hemisphere and a lot of holes in the opposite one. Different views of one of those shapes can be seen in Figure \ref{fig:mu_1_views}, where $m \approx 11.13 $ and $\mu_1(\Omega^m) \approx 1.77$ (for instance, a spherical cap of this surface area would give $\mu_1(\B^m) = 1.62$). This strange behaviour, combined with the density approach above, may suggest that the actual optimal may be attained by some kind of homogenization procedure. Some simple, non conclusive numerical test have been performed in this direction but this problem surely needs further investigation and may lead to interesting numerical and theoretical developements.

\begin{figure}
    \centering
    \includegraphics[width=0.24\textwidth]{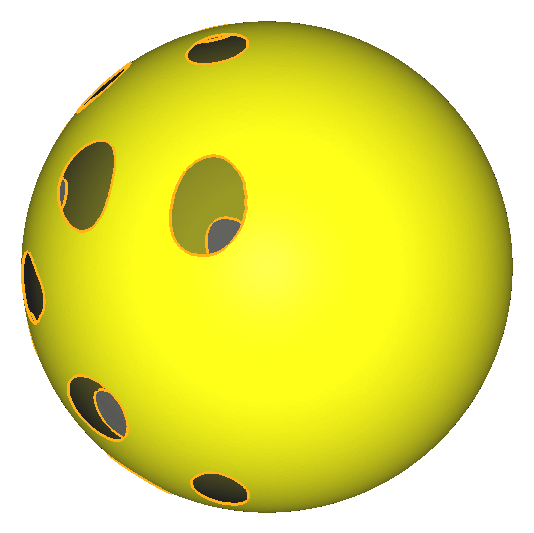}
     \includegraphics[width=0.24\textwidth]{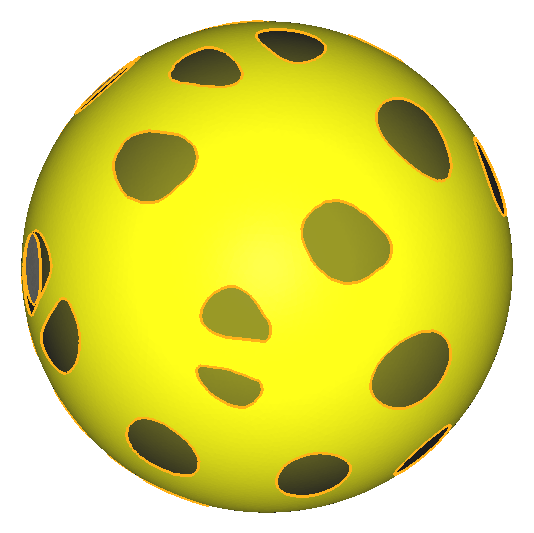}
      \includegraphics[width=0.24\textwidth]{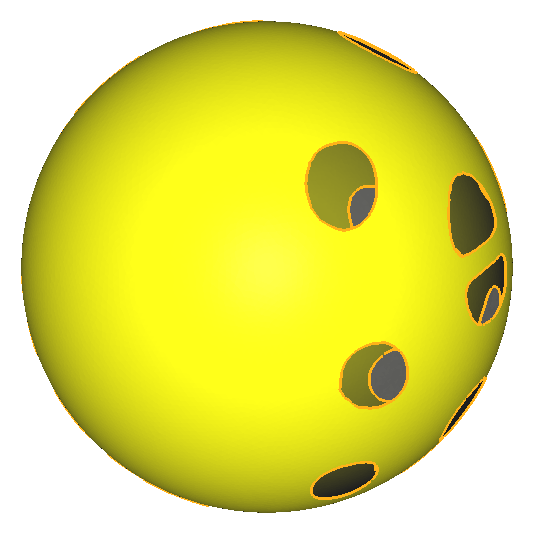}
       \includegraphics[width=0.24\textwidth]{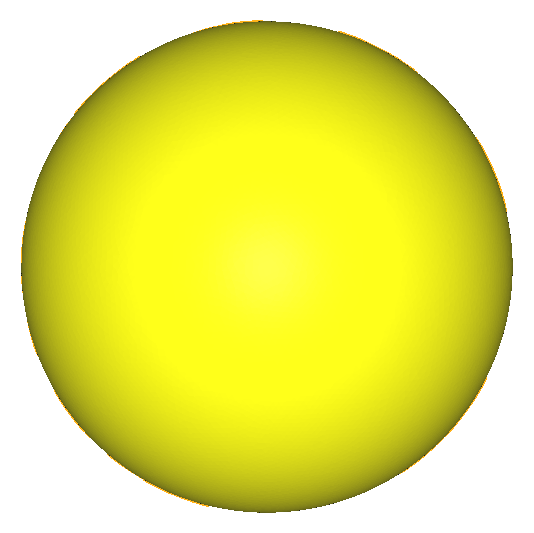}
    \caption{Rotationnal view of the optimal shape obtained by the level-set method for large $m$.}
    \label{fig:mu_1_views}
\end{figure}

More optimal shapes are displayed in Figure \ref{fig:mu_1_levelset_examples}. Looking at $\Omega^m$ for $m=8.0$ (the third one from the left), one can imagine that it would be possible for the geodesic cap to cease to be optimal for $m$ way lower than $8.0$ but the numerical procedure wouldn't be able to "see" it because it would be necessary to create details thinner than the size of an element of the mesh. However, it seems unlikely that the spherical cap ceases to be optimal for the same mass $m$ as the density method:

\begin{conjecture}Let $\delta$ be the same as in Conjecture 2. Then there exists $\delta'>\delta$ such that for all $0 < m < \delta'$, $\Omega^m=B^m$
\end{conjecture}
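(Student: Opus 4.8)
The plan is to split the range of $m$ into two pieces: for $m$ below the threshold $\delta$ of Conjecture~2 the assertion is essentially free, while for $m$ slightly above $\delta$ one must show that the geodesic ball survives as the \emph{domain} maximizer for a strictly longer range than it is the \emph{density} maximizer. Throughout I treat $\Omega^m$ as an actual maximizer of $|\Omega|\mu_1(\Omega)$ at fixed area; that the level-set scheme indeed reaches a global maximizer is a hypothesis implicit in the statement, which I do not address.

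For $m<\delta$: characteristic functions of Lipschitz sets are admissible densities and $\mu_1(\1_\Omega)=\mu_1(\Omega)$, hence $\sup_{|\Omega|=m}\mu_1(\Omega)\le\sup_{\int\rho=m}\mu_1(\rho)$; if Conjecture~2 holds the right-hand side equals $\mu_1(\1_{\B^m})=\mu_1(\B^m)$, which is attained by the domain $\B^m$, so $\B^m$ is a global maximizer among area-$m$ domains. The entire content of the statement is therefore to produce some $\delta'>\delta$.

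\textbf{Local strict maximality of the ball, and why there is room past $\delta$.} On $\S^2$ the cap $\B^m$ has $\mu_1$ of multiplicity two with explicit eigenfunctions of the form $f_m(\theta)\cos\phi$ and $f_m(\theta)\sin\phi$; by rotational symmetry the quantity $\sum_j(|\nabla u_j|^2-\mu_1 u_j^2)$ is constant on $\partial\B^m$, so $\B^m$ is a critical point in the appropriate directional sense among area-preserving perturbations — this is the overdetermined condition underlying the arguments of \cite{bandle_isoperimetric_1972, weinberger_isoperimetric_1956}. The plan is then to compute the second shape derivative, i.e.\ the branching of the two perturbed eigenvalues under a normal boundary perturbation, which by separation of variables collapses to a one-dimensional Sturm--Liouville problem, and to set $m^*_D$ to be the largest $m$ for which the resulting quadratic form is negative definite modulo rigid motions. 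For $m<m^*_D$ one obtains the quantitative bound $\mu_1(\Omega)\le\mu_1(\B^m)-c_m\|\partial\Omega-\partial\B^m\|^2$ for area-$m$ domains $\Omega$ in a $C^{1,\alpha}$-neighborhood of $\B^m$. Carrying out the analogous computation for the functional $\rho\mapsto\mu_1(\rho)$ produces a threshold $m^*_\rho$, and since normal perturbations of $\partial\B^m$ form a strict subclass of the admissible density perturbations $h\in\L^\infty(\S^2)$, one expects $m^*_\rho<m^*_D$; together with $\delta\le m^*_\rho$ (a global maximizer is a local one) this strict inequality is exactly the mechanism producing $\delta'>\delta$. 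All of these are concrete, essentially one-dimensional computations.

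\textbf{Compactness, and the main obstacle.} Put $\delta':=\sup\{m:\B^{m'}\text{ is the unique maximizer, up to rotations, of }\mu_1\text{ over area-}m'\text{ domains for every }m'<m\}\ge\delta$, and suppose $\delta'=\delta$. Then there are $m_j\downarrow\delta$ and area-$m_j$ domains $\Omega_j$ that are not balls with $\mu_1(\Omega_j)\ge\mu_1(\B^{m_j})-o(1)$. Passing to a weak-$*$ limit $\1_{\Omega_j}\rightharpoonup\rho_*$, the upper semicontinuity proved in Theorem~\ref{th:existence} together with a continuity-in-$m$ argument (which lets one invoke Conjecture~2 at the endpoint $m=\delta$) forces $\rho_*$ to maximize $\mu_1$ among densities of mass $\delta$, hence $\rho_*=\1_{\B^\delta}$ up to a rotation — \emph{provided} the density maximizer at mass $\delta$ is unique modulo $O(3)$, an auxiliary fact that one would extract from the symmetrization behind Conjecture~2. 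A weak-$*$ limit of characteristic functions that is itself a characteristic function converges strongly in $\L^1$, so $\Omega_j\to\B^\delta$ in $\L^1$. The decisive step that remains is to upgrade this to convergence in $C^{1,\alpha}$: one needs a regularity theory for near-optimal domains, asserting that for $m$ near $\delta$ any area-$m$ domain whose $\mu_1$ is within $o(1)$ of the maximum is uniformly $C^{1,\alpha}$-close to $\B^m$. Granted that, $\Omega_j$ eventually lies in the neighborhood of the previous paragraph (this uses $\delta<m^*_D$), which forces $\Omega_j=\B^{m_j}$ and contradicts the choice of $\Omega_j$; hence $\delta'>\delta$. I expect this regularity statement to be the genuine obstacle: Neumann eigenvalues are not even upper semicontinuous under $\L^1$ convergence of domains, and no free-boundary-type regularity theory for Neumann spectral optimization on a curved surface is presently available — whereas the second-variation computations, the strict inequality $m^*_\rho<m^*_D$, and the uniqueness of the density optimizer at $\delta$ are all concrete and within reach of known techniques.
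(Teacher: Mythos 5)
There is nothing in the paper to compare against: the statement you are addressing is a \emph{conjecture}, supported only by the numerical experiments, and the author's sole comment on it is the remark that $\delta'\geq\delta$ is obvious while strictness of the inequality is the open point. Your first paragraph reproduces exactly that easy half (under Conjecture 2, characteristic functions of Lipschitz sets are admissible densities, so the cap is a global domain maximizer for every $m<\delta$); that part is fine but carries no new content. Everything that would make the conjecture a theorem lies in producing $\delta'>\delta$, and there your proposal is a programme rather than a proof, with gaps you largely acknowledge yourself.

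Concretely: (i) the second-variation thresholds $m^*_D$ and $m^*_\rho$ are never computed, and the decisive inequality $m^*_\rho<m^*_D$ is only ``expected''; the inclusion of normal boundary perturbations among density perturbations yields at best $m^*_\rho\leq m^*_D$, and without the explicit Sturm--Liouville computation nothing rules out equality, in which case your mechanism produces no room past $\delta$. (ii) Even granting $\delta<m^*_D$, local strict maximality of the cap does not control the \emph{global} domain maximizer for $m$ slightly above $\delta$; your compactness argument is meant to bridge this, but it needs continuity in $m$ of the optimal value, uniqueness modulo rotations of the density maximizer at mass $\delta$ (an ``auxiliary fact'' you propose to extract from a symmetrization proof of Conjecture 2 that does not exist --- Conjecture 2 is itself only numerical), and above all a quantitative stability/regularity statement upgrading the $\L^1$ convergence $\Omega_j\to\B^\delta$ of near-maximizers to $C^{1,\alpha}$ convergence. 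As you concede, no such regularity theory is available, and the paper's own numerics (near-optimal perforated shapes for $\mu_1$ at larger masses, and the suggestion that optimal domains may fail to exist) warn that near-maximizing domains need not be geometrically close to a cap, so this step may be false as stated rather than merely unproven. (iii) The whole construction is conditional on Conjecture 2, so even if completed it would prove a conditional statement, which is consistent with how the paper frames Conjecture 3 but should be said explicitly. In short: the reduction $\delta'\geq\delta$ matches the paper's remark, the upper-semicontinuity input you use is indeed Theorem~\ref{th:existence}, but the strict inequality --- the only substantive claim --- remains open in your write-up just as it does in the paper.
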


\begin{remark}The fact that $\delta' \geq \delta$ is obvious; the interesting part would be to show that the inequality is strict.
\end{remark}

\begin{figure}
    \centering
    \includegraphics[width=0.24\textwidth]{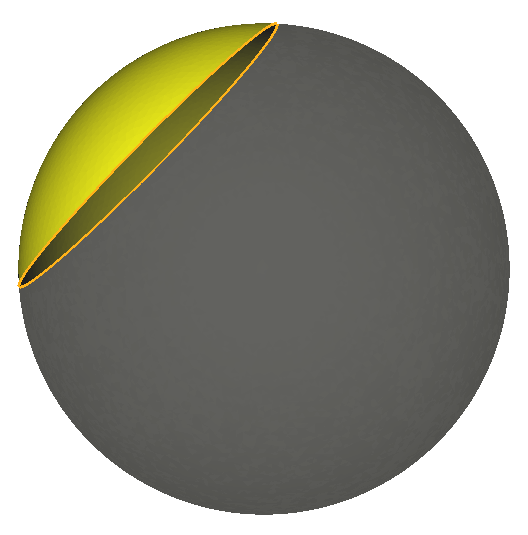}
    \includegraphics[width=0.24\textwidth]{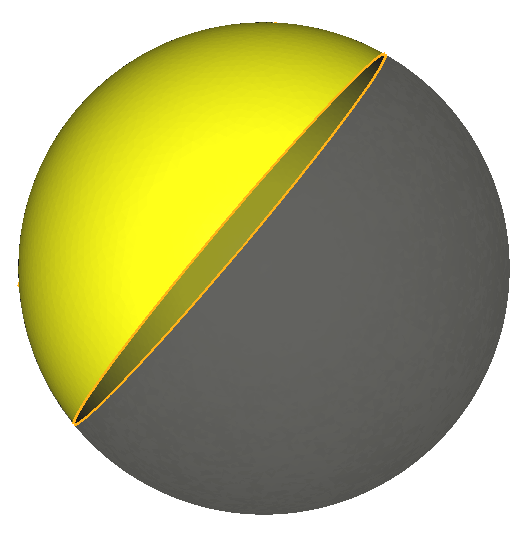}
    \includegraphics[width=0.24\textwidth]{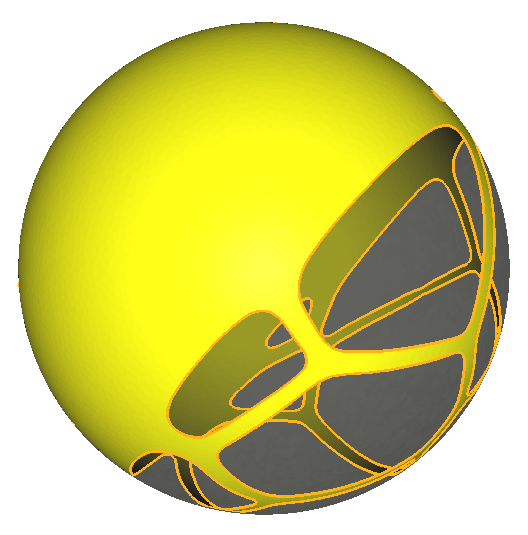}
    \includegraphics[width=0.24\textwidth]{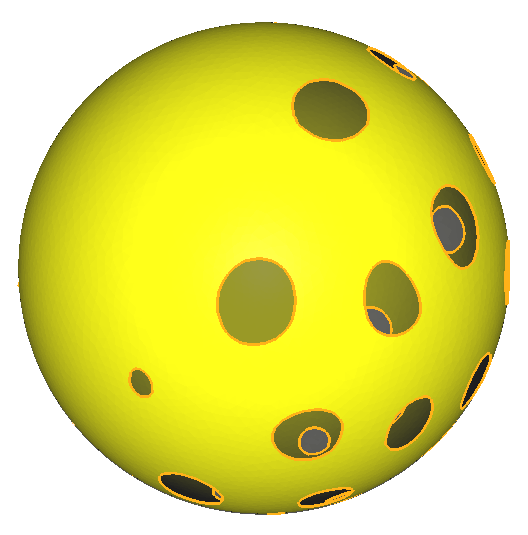}
    \caption{Example of optimal domains for $\mu_1$ for $m \in \{2.03, 5.1, 8.0, 10.85\}$.}
    \label{fig:mu_1_levelset_examples}
\end{figure}

\subsection{Optimization of $\mu_2$}

\begin{figure}
    \centering
    \includegraphics[width=0.49\textwidth]{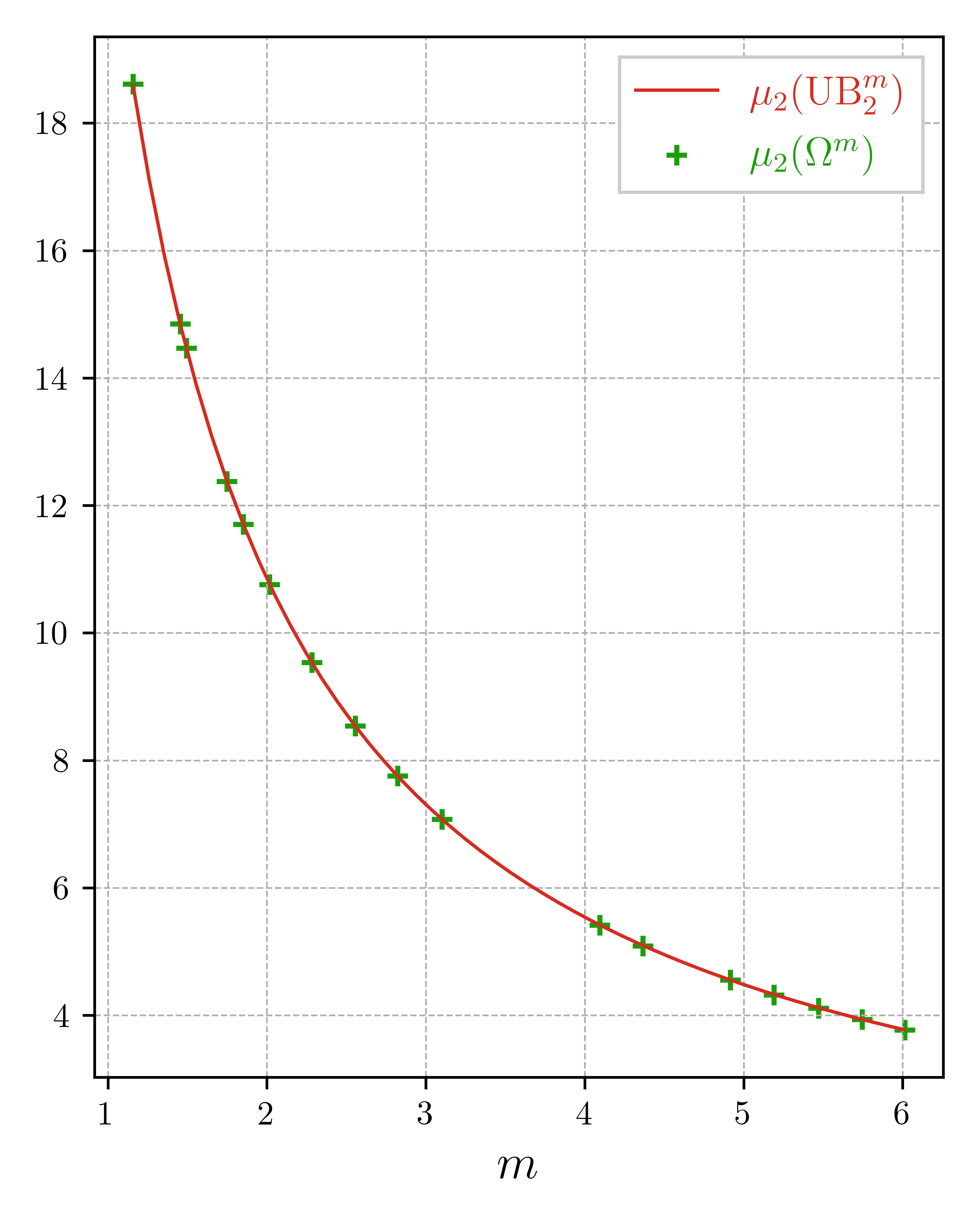}
    \includegraphics[width=0.49\textwidth]{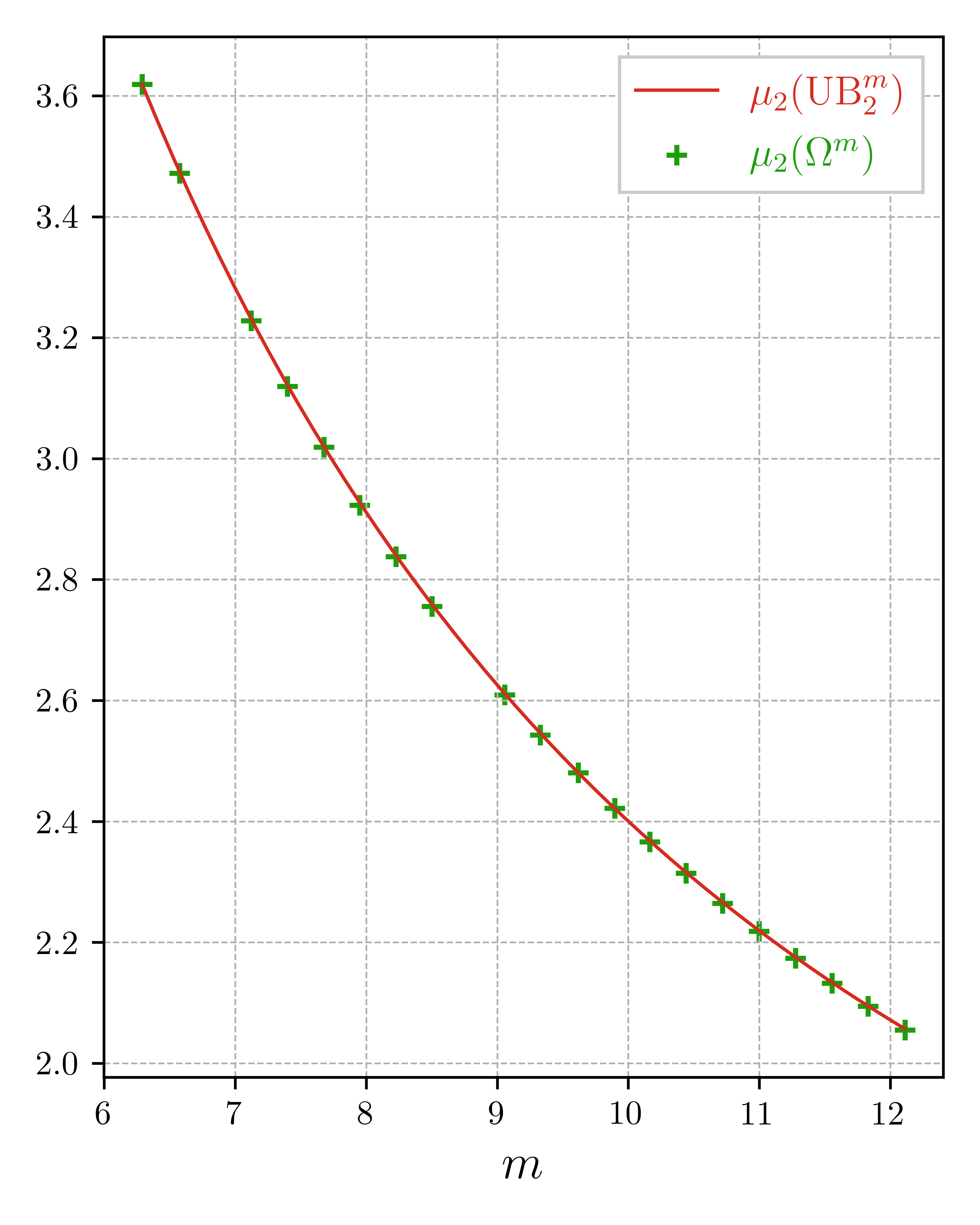}
    \caption{Optimal value of $\mu_2$ obtained by the level-set method.}
    \label{fig:mu_2_ls}
\end{figure}

The optimal results for $\mu_2$ are displayed Figure \ref{fig:mu_2_ls}. We can clearly see that the optimal shape is always the union of two spherical caps, as it has been proven in \cite{bucur_sharp_2022}. Hence this case can be considered as a test case to support the validity of the method. In Figure \ref{fig:mu_2_levelset_examples} are shown the optimal shapes. For $m=1.17$, we see that the computed shape isn't a union of two disjoint disks. Indeed, for large $m$, the first levelset procedure struggled to disconnect one domain into two disks due to numerical instabilities. The eigenvalue is however really close to the one of two disks.

\begin{figure}
    \centering
    \includegraphics[width=0.24\textwidth]{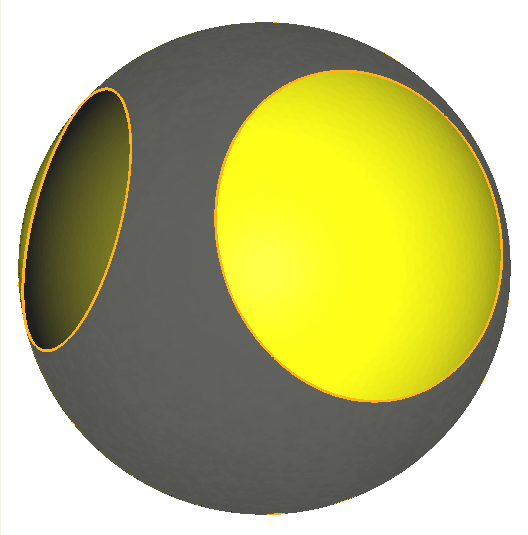}
    \includegraphics[width=0.24\textwidth]{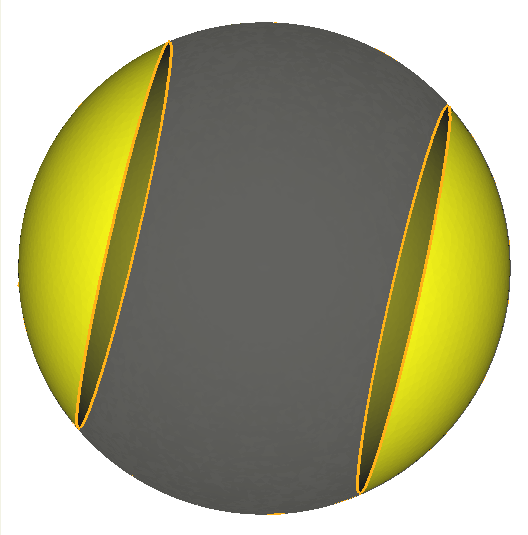}
    \includegraphics[width=0.24\textwidth]{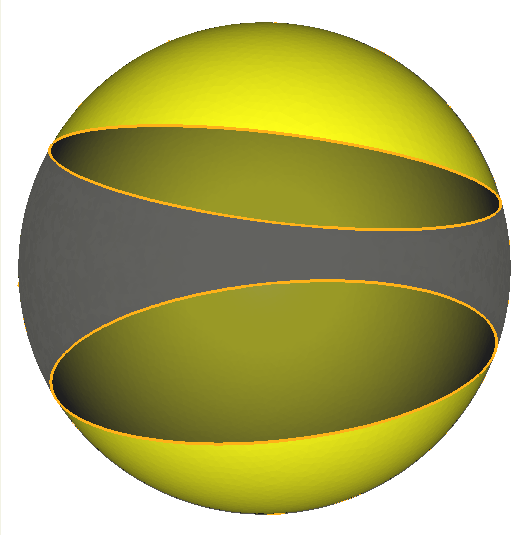}
    \includegraphics[width=0.24\textwidth]{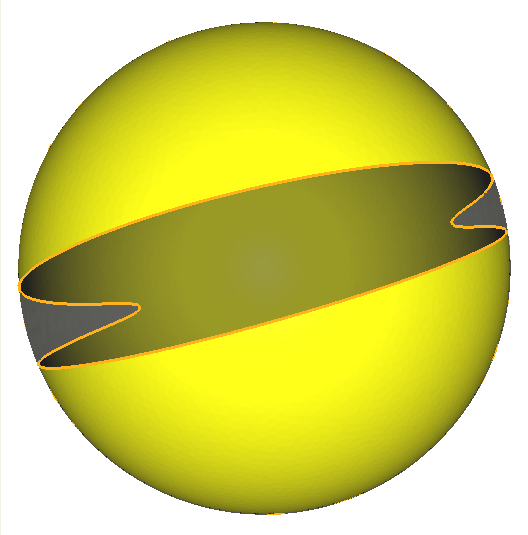}
    \caption{Example of optimal domains for $\mu_2$ for $m \in \{2.12, 5.1, 8.13, 11.17\}$.}
    \label{fig:mu_2_levelset_examples}
\end{figure}

\subsection{Optimization of $\mu_3$}

\begin{figure}
    \centering
    \includegraphics[width=0.49\textwidth]{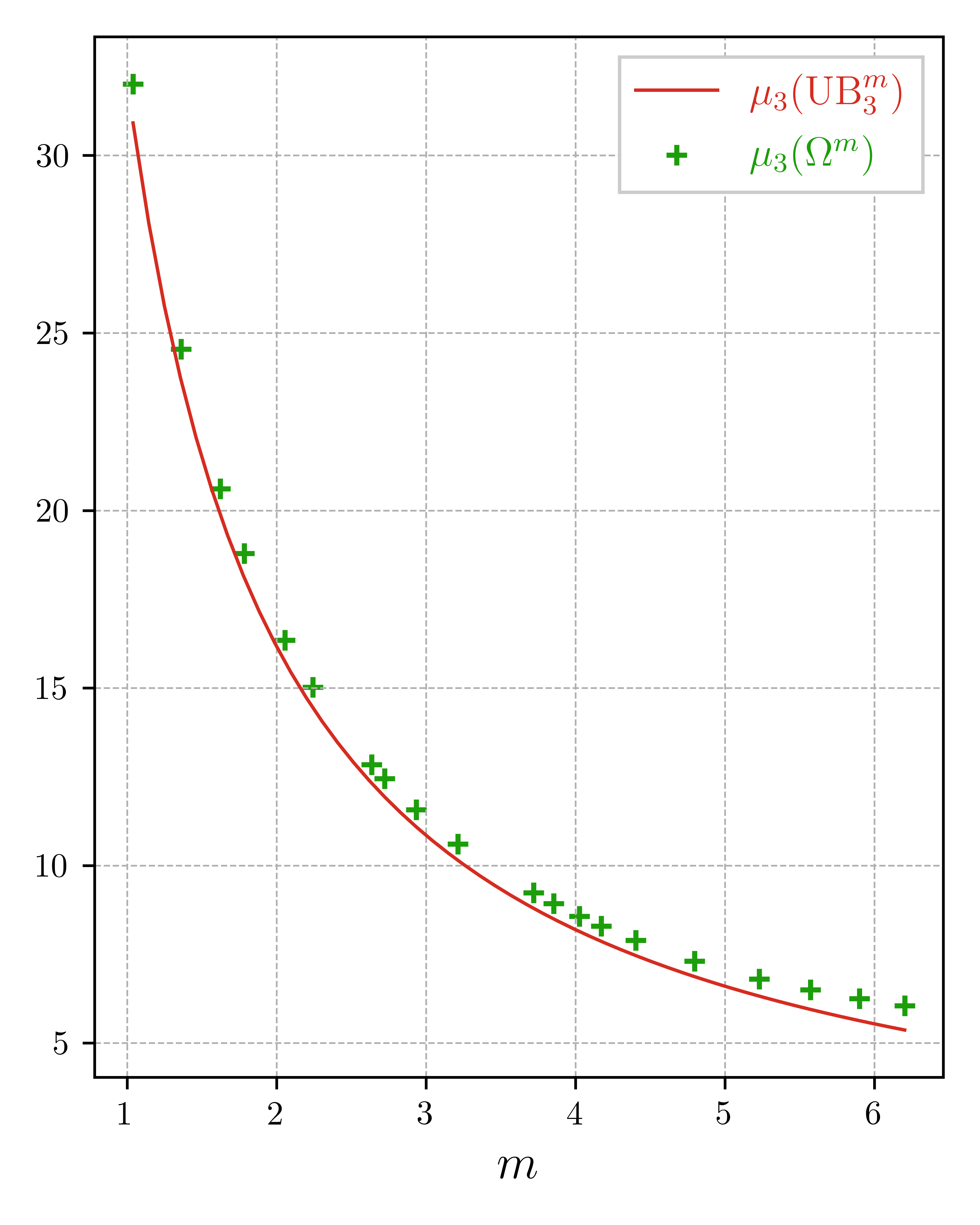}
    \includegraphics[width=0.49\textwidth]{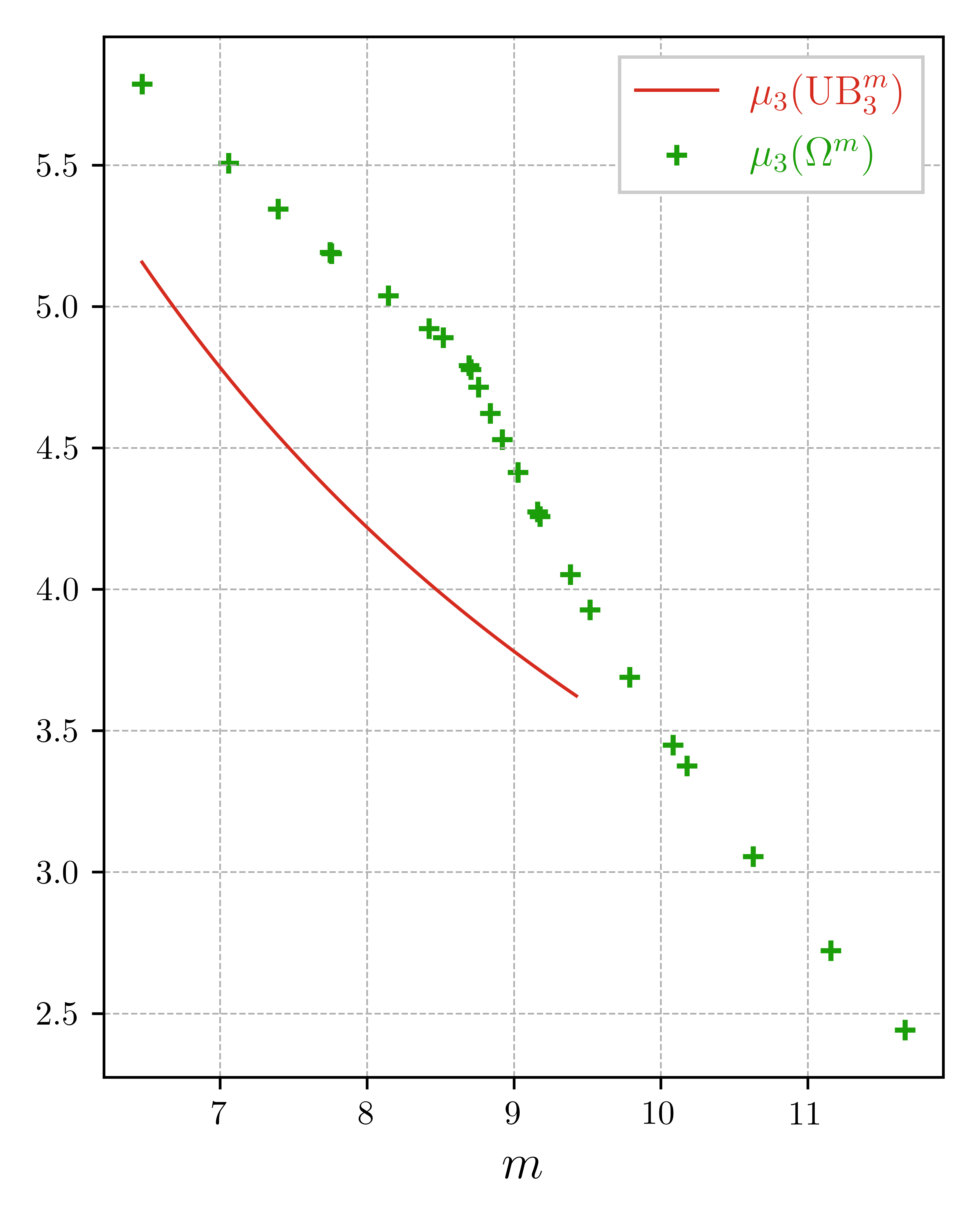}
    \caption{Optimal value of $\mu_3$ obtained by the level-set method.}
    \label{fig:mu_3_ls}
\end{figure}

In Figure \ref{fig:mu_3_ls} is displayed the results for the optimization of $\mu_3$. As for the density case, this eigenvalue shows a rich variety of behaviours depending on the value of $m$ (see Figure \ref{fig:mu_3_levelset_examples}).

\begin{figure}
    \centering
    \includegraphics[width=0.24\textwidth]{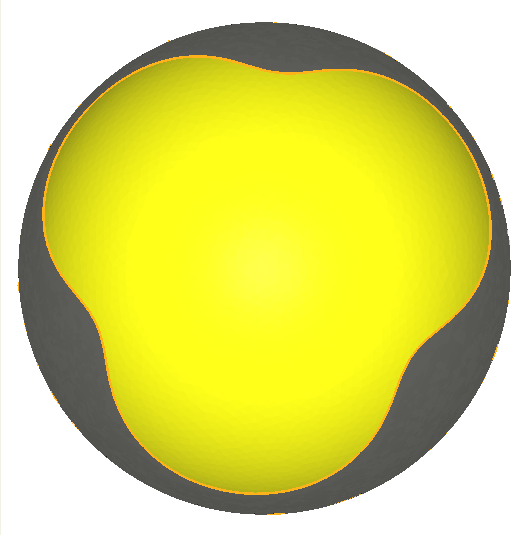}
    \includegraphics[width=0.24\textwidth]{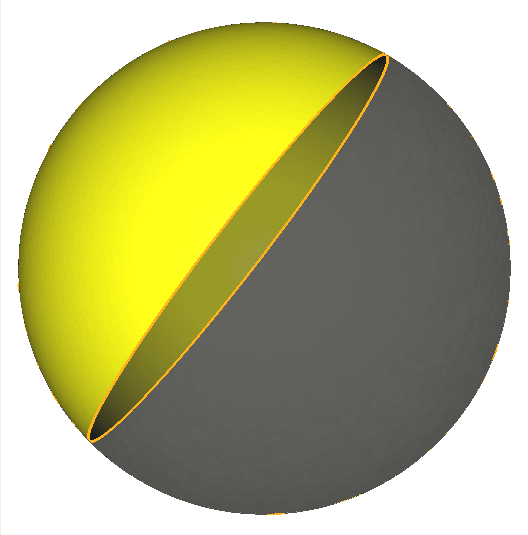}
    \includegraphics[width=0.24\textwidth]{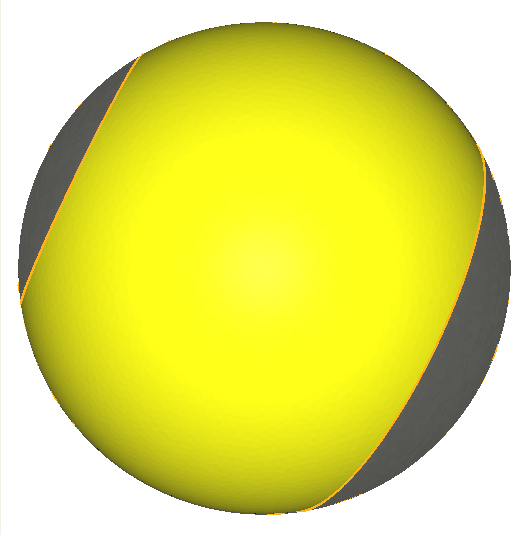}
    \includegraphics[width=0.24\textwidth]{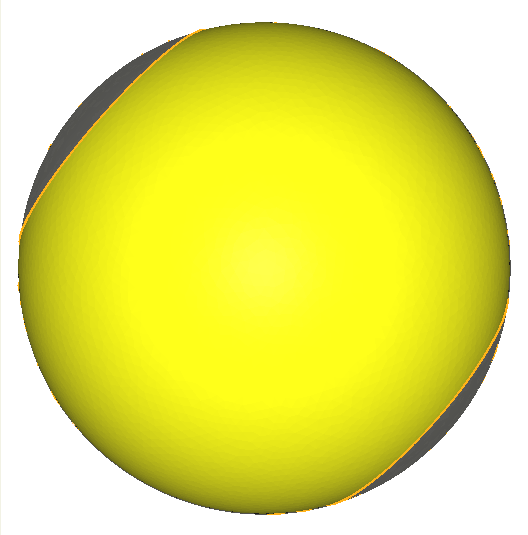}
    \caption{Example of optimal domains for $\mu_3$ for $m \in \{2.0, 5.22, 8.0, 11.04\}$.}
    \label{fig:mu_3_levelset_examples}
\end{figure}

The results seems to be in accordance with the ones given by the density method.

\subsection{Data.}

As for the results of the density method, all the final solutions are available in MEDIT format at \url{https://github.com/EloiMartinet/Neumann_Sphere/}, with a FreeFem++ script allowing to compute the eigenvalue and surface area of each solution.

\section{Explorations on a torus}

As it has been mentioned in the introduction, this last part is devoted to the optimization of eigenvalues on a torus. Specifically, we consider the torus $\mathbb{T}$ in $\R^3$ parametrized by
\[
  (u,v) \mapsto \left( (R+r\cos{v})\cos{u}, r \sin{v}, (R+r\cos{v})\sin{u} \right),
\]
that is, the torus with major radius $R$ and minor radius $r$. In the sequel, we will choose $R=2$ and $r=1$. Knowing that the surface area of $\mathbb{T}$ is given by
\[
  |\mathbb{T}| = 4 \pi^2 Rr
\]
we have in our case $|\mathbb{T}| \approx 78.96$.

The problem we consider now is analogous to the one on the sphere. With $\Om \subset \mathbb{T}$ being a Lipschitz open domain, the eigenvalue problem is
\[
  \begin{cases}
    -\Delta u = \mu_k(\Om) u \mbox { in } \Om,\\
    \frac{\partial u}{\partial n} = 0  \mbox { on } \partial \Om,
  \end{cases}.
\]

We also use the same notion of generalized eigenvalues of a density $\rho : \mathbb{T} \to [0,1]$, we define
\begin{equation*}
  \mu _k(\rho) := \inf_{V\in{\mathcal V}_{k+1}} \max_{u \in V \sm \{0\}} \frac{\int_{\mathbb{T}} \rho|\nabla u|^2}{\int_{\mathbb{T}} \rho u^2},
\end{equation*}
where ${\mathcal V}_{k+1}$ is the family of subspaces of dimension $k+1$ in
\[
  \{u\cdot 1_{\{\rho (x)>0\}}: u \in C^\infty_c (\mathbb{T})\}.
\]

We then consider the two optimization problems
\begin{equation*}
  \sup \left\{ \mu_k(\Om) \mbox{ s.t. } \Om \subset \mathbb{T}, | \Om | = m, \Om \mbox{ bounded, open and Lipschitz} \right\}
\end{equation*}

and

\begin{equation*}
  \sup \left\{ \mu_k(\rho) \mbox{ s.t. }  \rho : \mathbb{T} \rightarrow [0,1], \int_{\mathbb{T}}\rho =m\right\}.
\end{equation*}

with $0 < m < |\mathbb{T}|$ and $k>0$.

This last formulation allows to perform the same density method performed on the sphere. These are the results presented hereafter, followed by the results obtained by the level set method.

\subsection{Density optimization}

We precise that the optimization parameters that have been used are the same than the ones used for the sphere. We begin to depict some of the optimal densities in Figure \ref{fig:tore_mu_density_examples}. For a better visualization, we recall that all the meshes and densities are available at \url{https://github.com/EloiMartinet/Neumann_Sphere/}.

\begin{figure}
    \centering
    \includegraphics[width=0.24\textwidth]{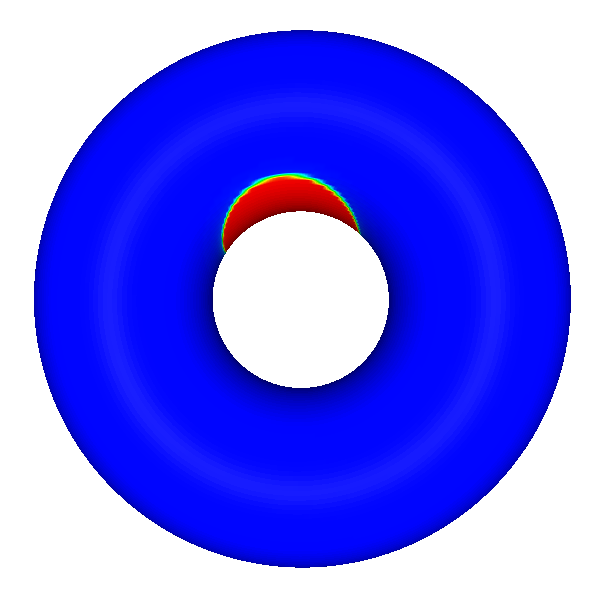}
    \includegraphics[width=0.24\textwidth]{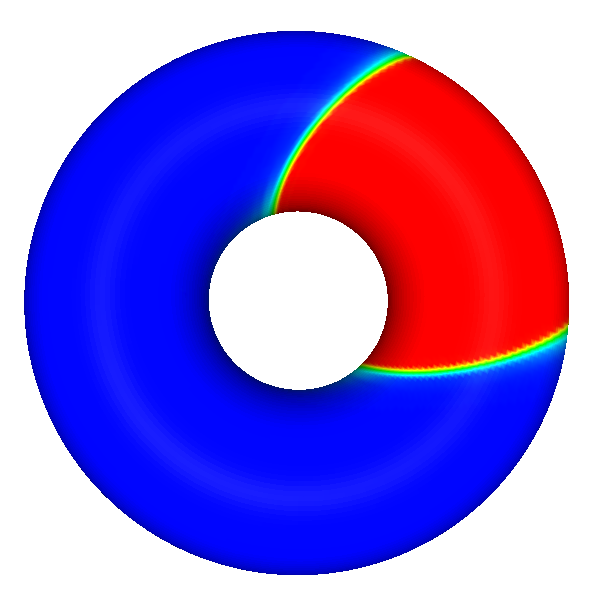}
    \includegraphics[width=0.24\textwidth]{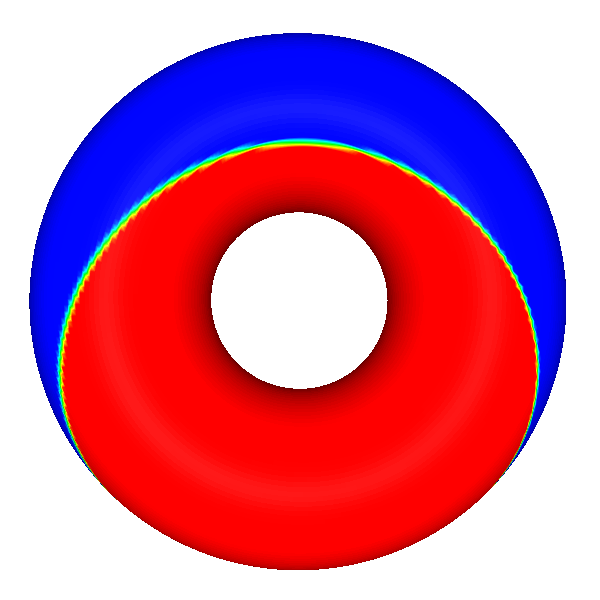}
    \includegraphics[width=0.24\textwidth]{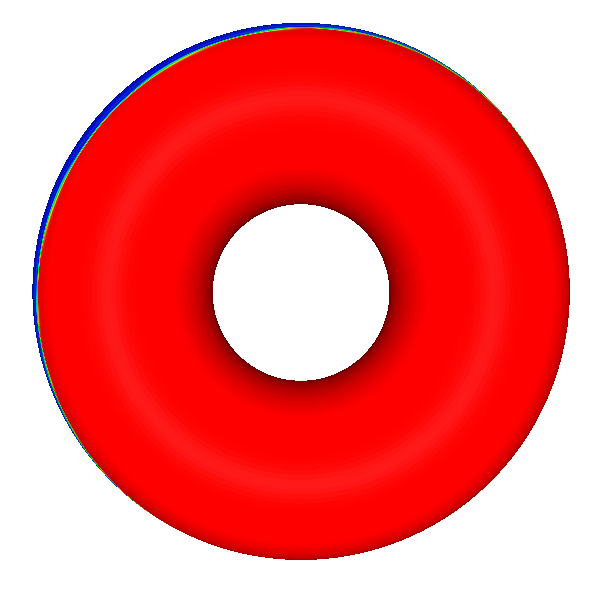}

    \includegraphics[width=0.24\textwidth]{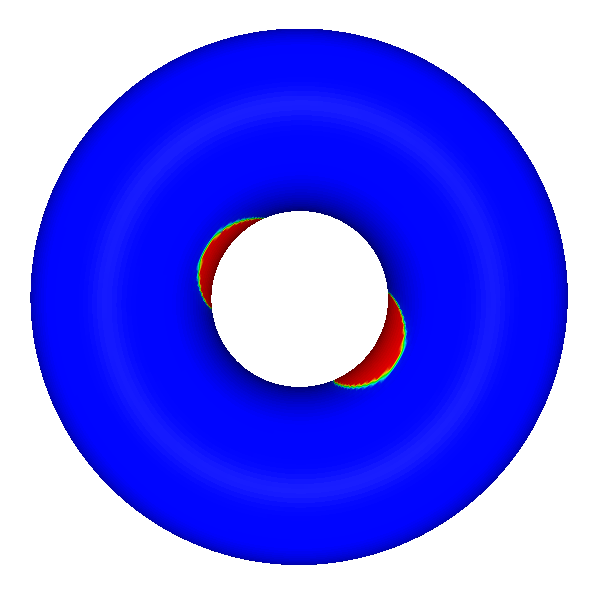}
    \includegraphics[width=0.24\textwidth]{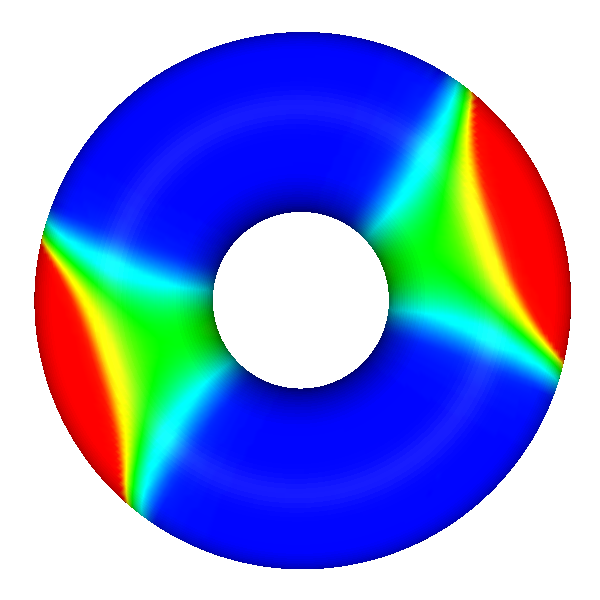}
    \includegraphics[width=0.24\textwidth]{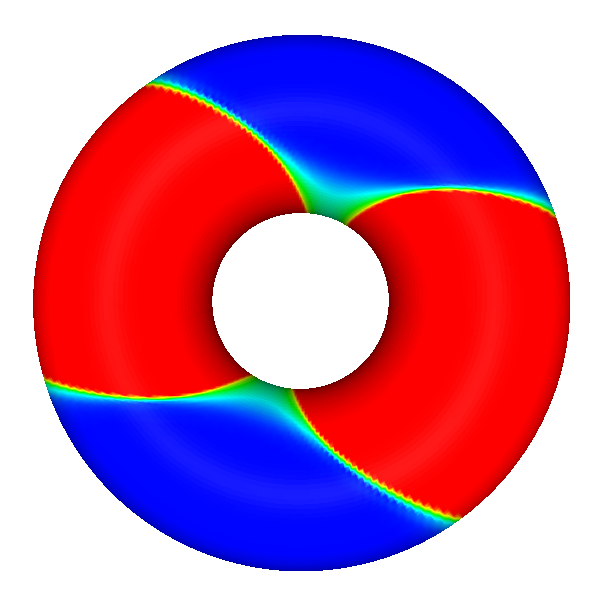}
    \includegraphics[width=0.24\textwidth]{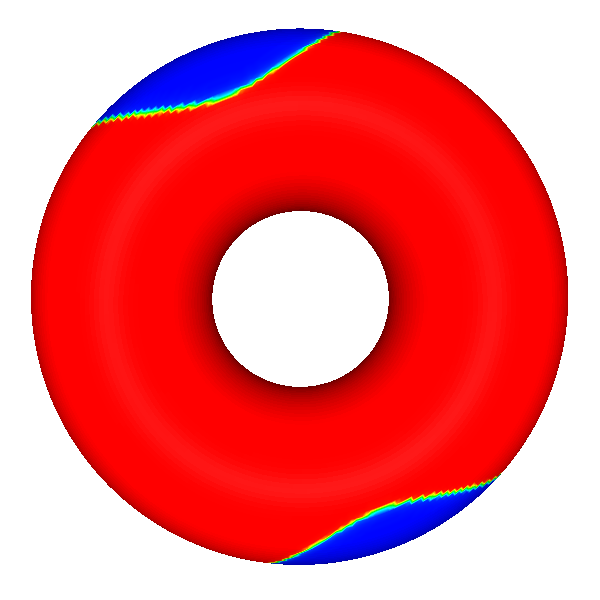}

    \includegraphics[width=0.24\textwidth]{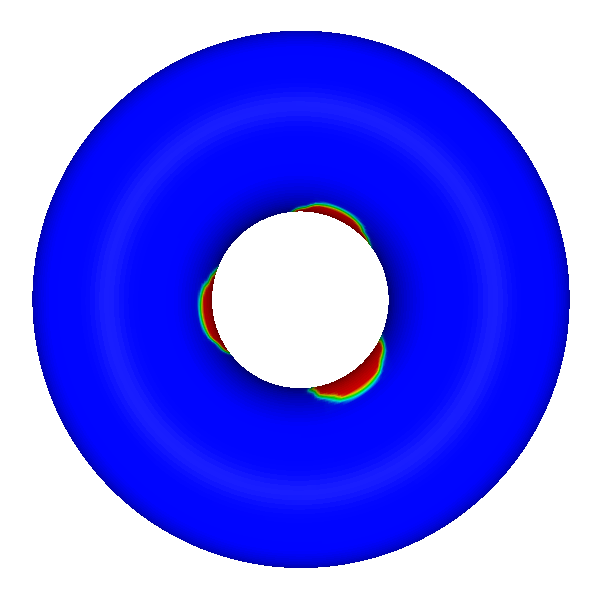}
    \includegraphics[width=0.24\textwidth]{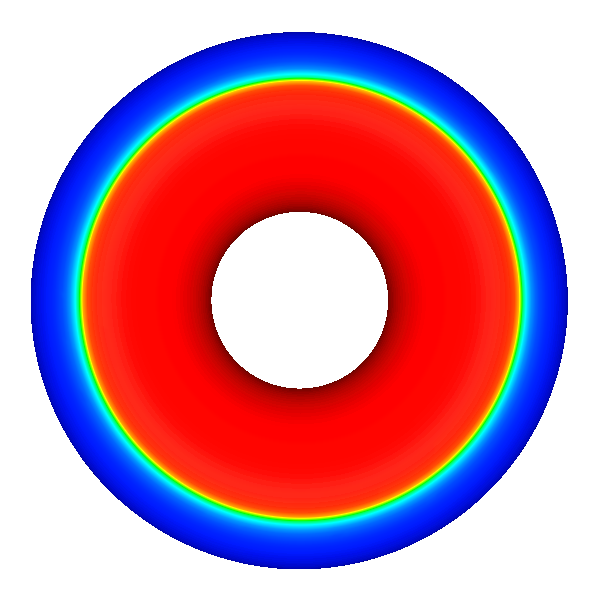}
    \includegraphics[width=0.24\textwidth]{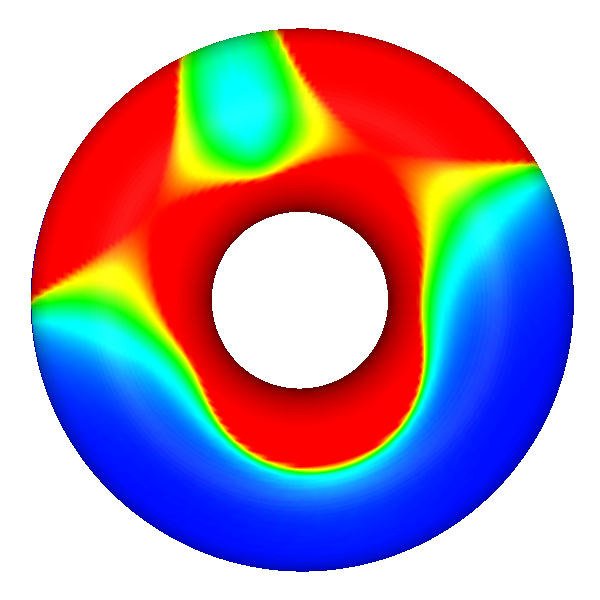}
    \includegraphics[width=0.24\textwidth]{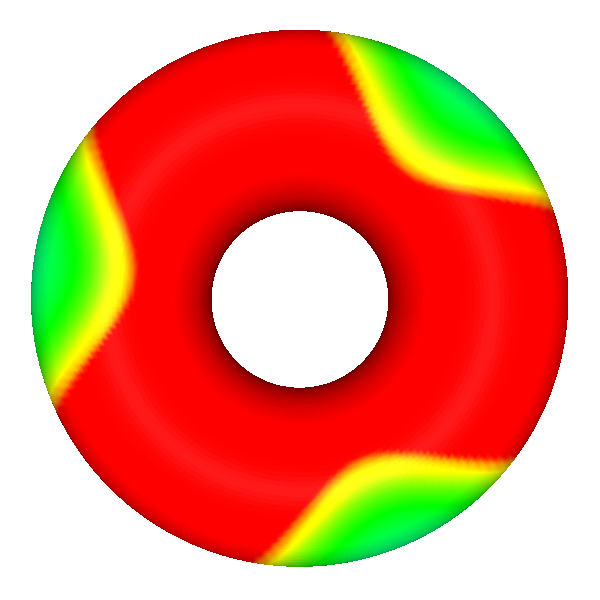}

    \caption{Example of optimal densities for $\mu_1, \mu_2$ and $\mu_3$ (resp. first, second and third row) for $m \in \{3, 22, 43, 68\}$ approximately.}
    \label{fig:tore_mu_density_examples}
\end{figure}

We can notice that for small enough masses, the optimal densities seems to be the characteristic function of geodesic balls for $k\in\{1,2,3\}$. However, by comparison with the case of the plane and the sphere, the case of $\mu_3$ must be taken with caution.
A striking fact is that contrary to the case of the sphere, the optimal density for $\mu_1$ seems to stay a charateristic function whereas we cans
witness some homogenization for $\mu_2$.
The optimal eigenvalues plotted as functions of $m$ are shown in Figure \ref{fig:tore_mu_density}.

\begin{figure}
    \centering
    \includegraphics[width=0.4\textwidth]{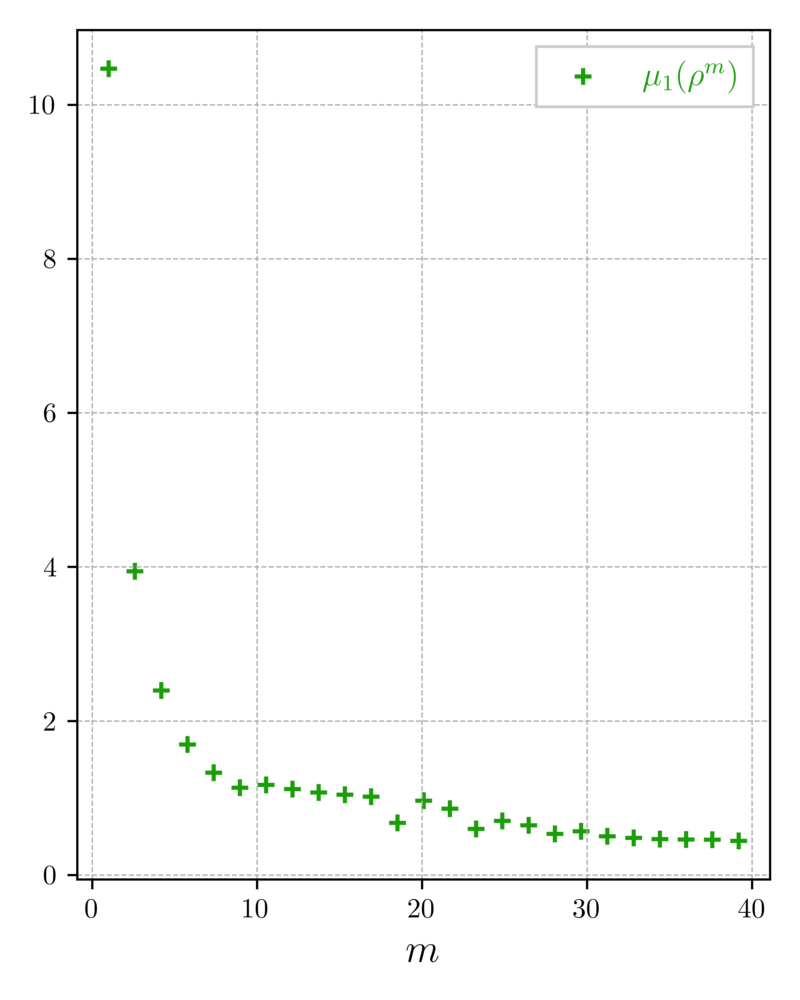}
    \includegraphics[width=0.4\textwidth]{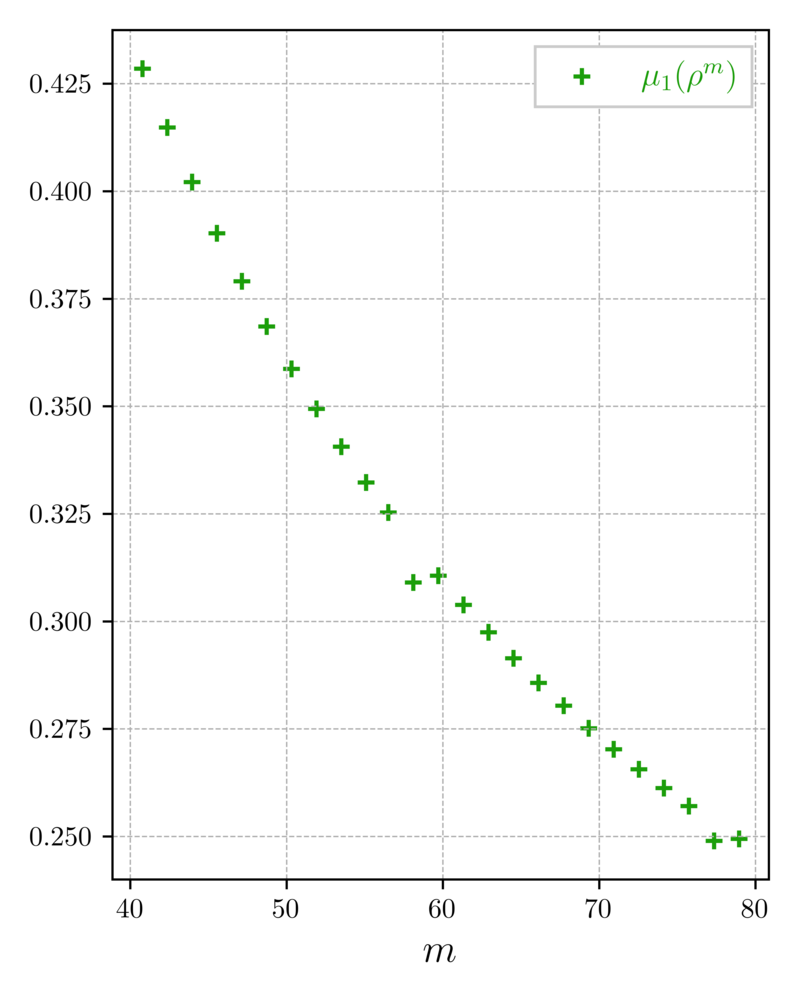}
    \includegraphics[width=0.4\textwidth]{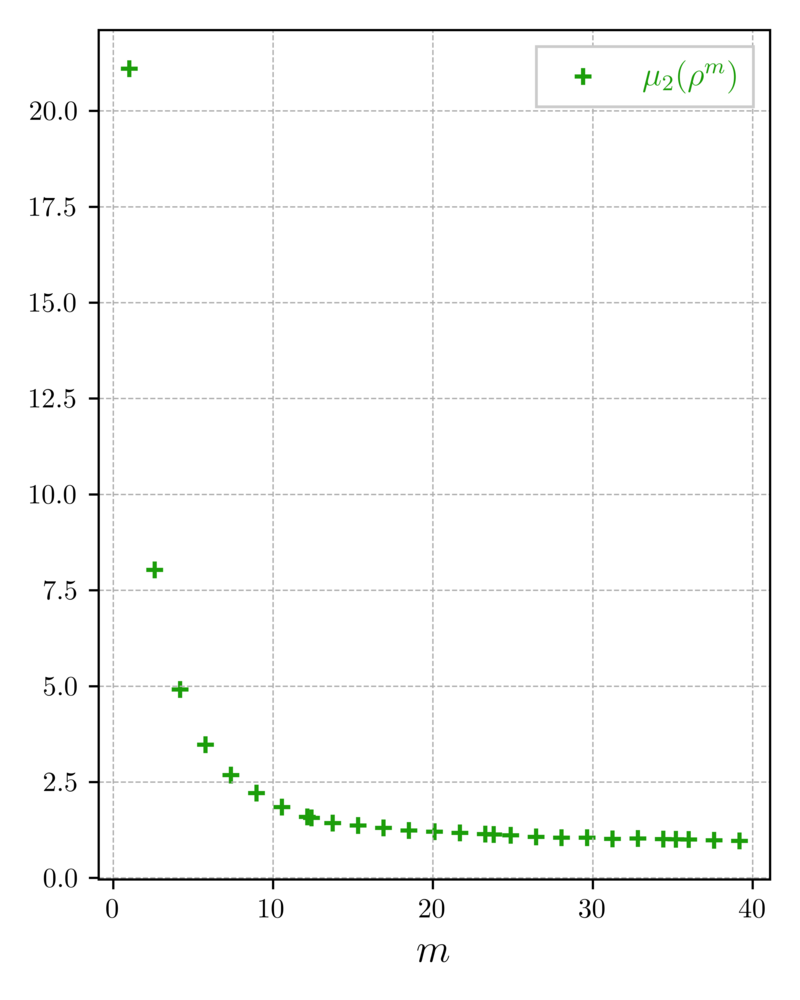}
    \includegraphics[width=0.4\textwidth]{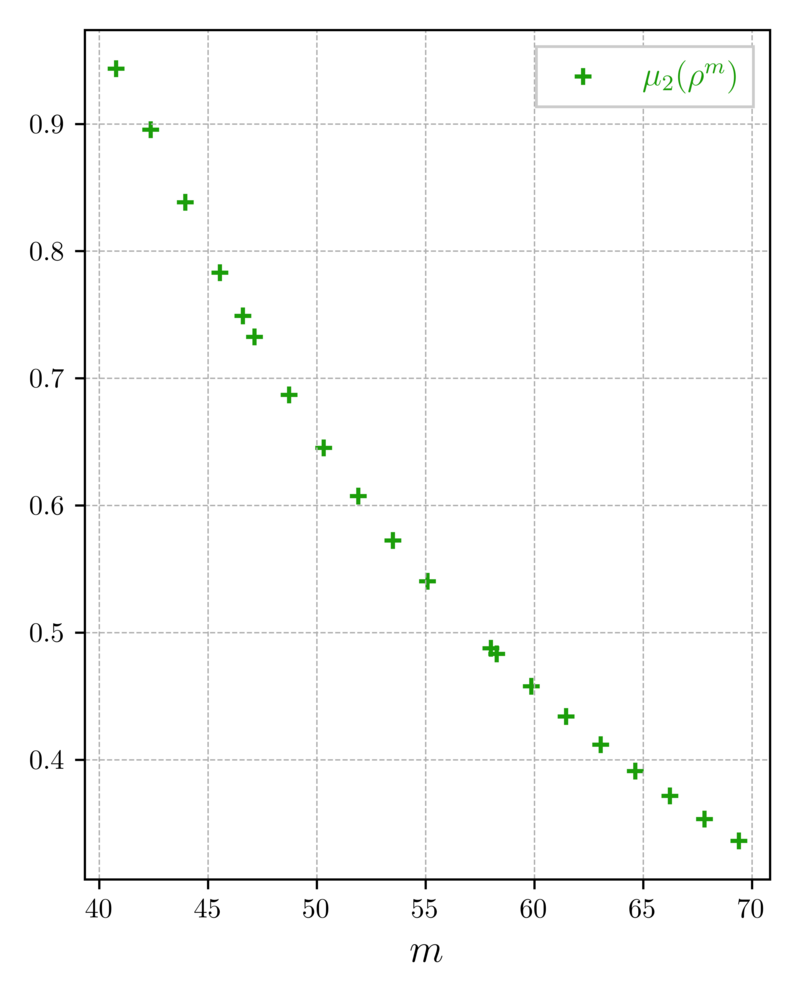}
    \includegraphics[width=0.4\textwidth]{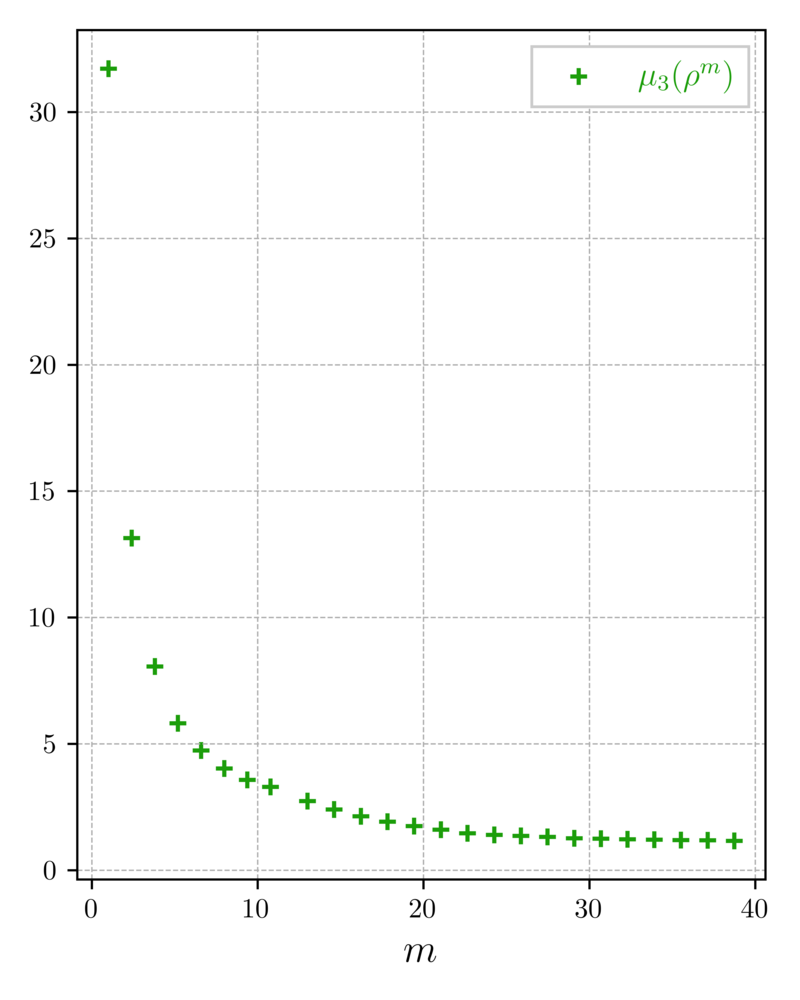}
    \includegraphics[width=0.4\textwidth]{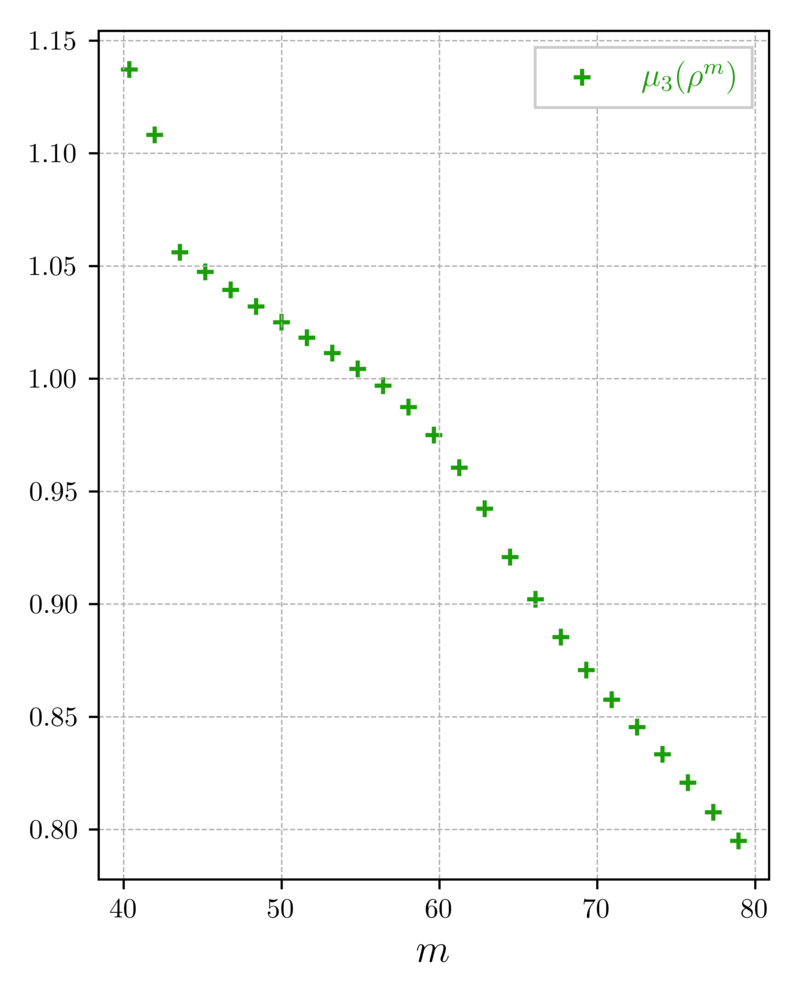}
    \caption{Optimal values of $\mu_1, \mu_2$ and $\mu_3$ (resp. first, second and third row) obtained by the density method.}
    \label{fig:tore_mu_density}
\end{figure}

\subsection{Level set optimization}

In Figure \ref{fig:tore_mu_ls_examples} are the optimal domains obtained by the level set method for $\mu_1, \mu_2, \mu_3$ and various masses.

\begin{figure}
    \centering
    \includegraphics[width=0.24\textwidth]{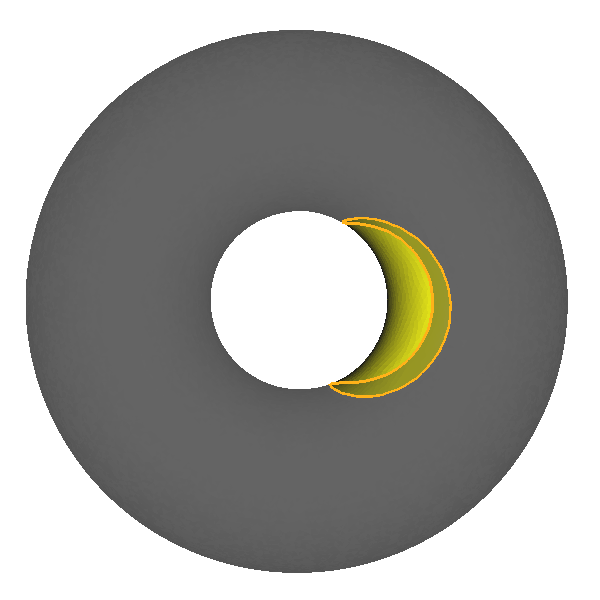}
    \includegraphics[width=0.24\textwidth]{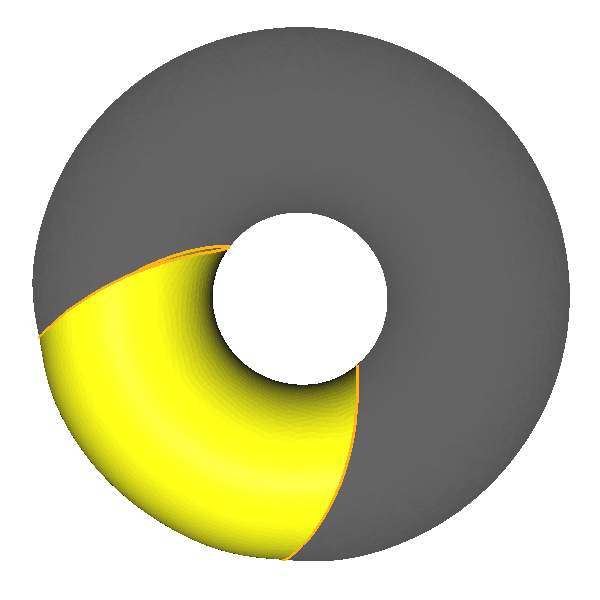}
    \includegraphics[width=0.24\textwidth]{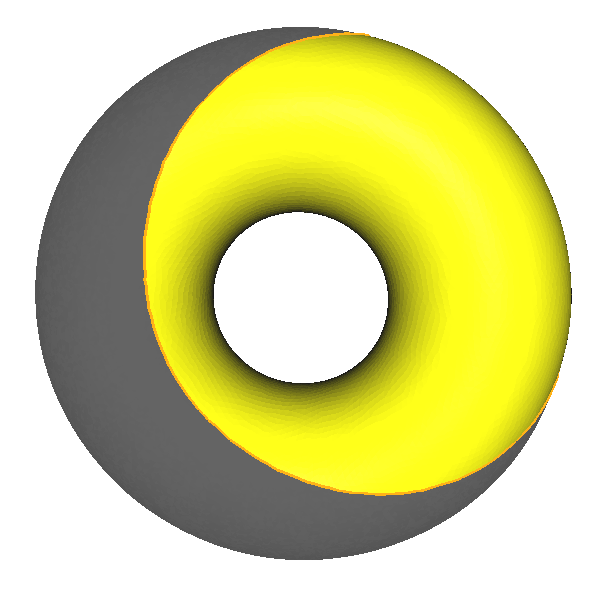}
    \includegraphics[width=0.24\textwidth]{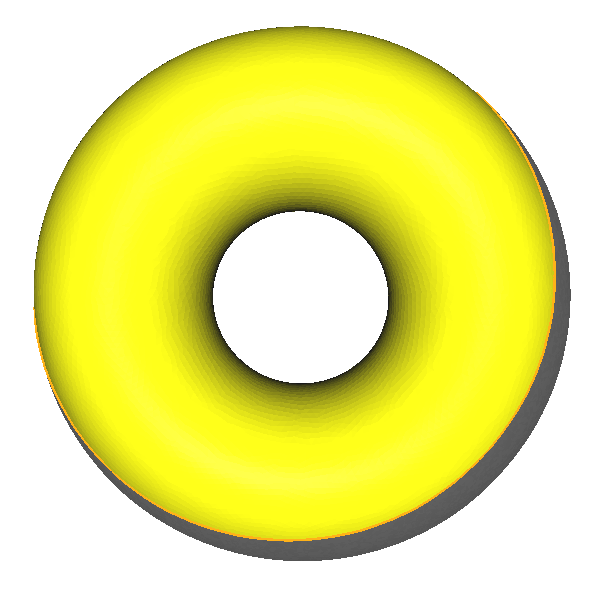}

    \includegraphics[width=0.24\textwidth]{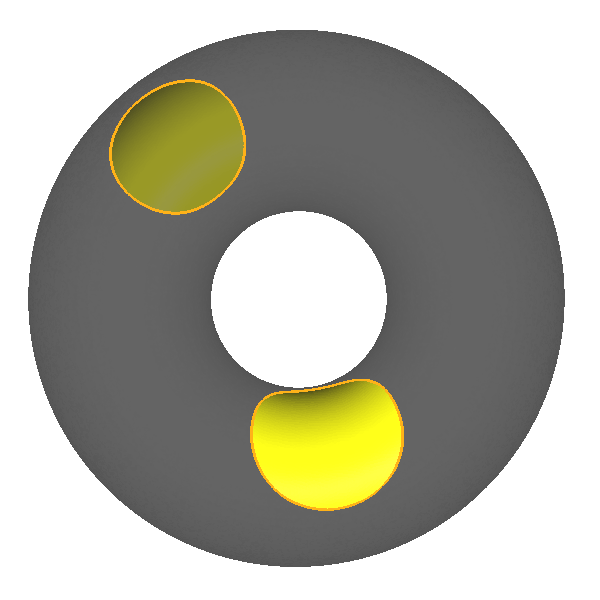}
    \includegraphics[width=0.24\textwidth]{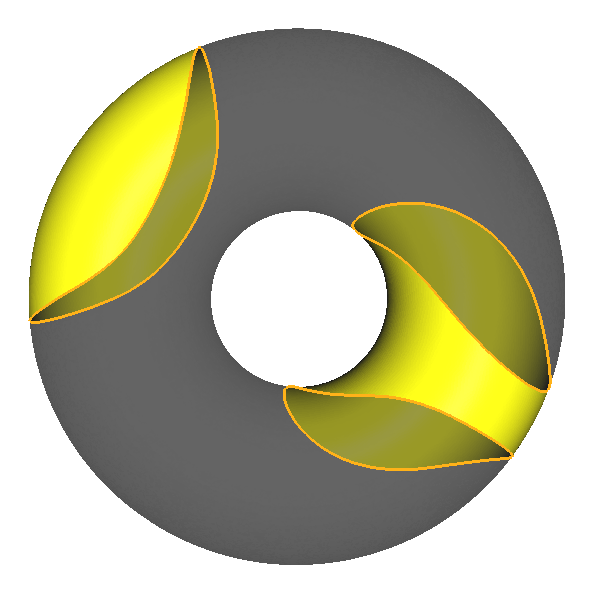}
    \includegraphics[width=0.24\textwidth]{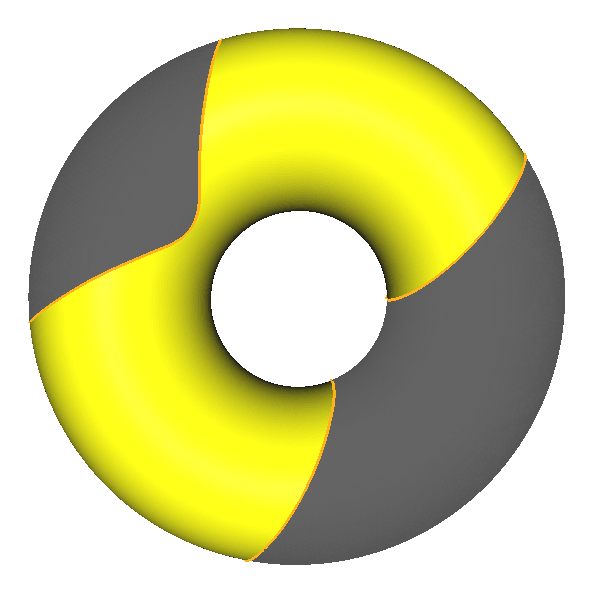}
    \includegraphics[width=0.24\textwidth]{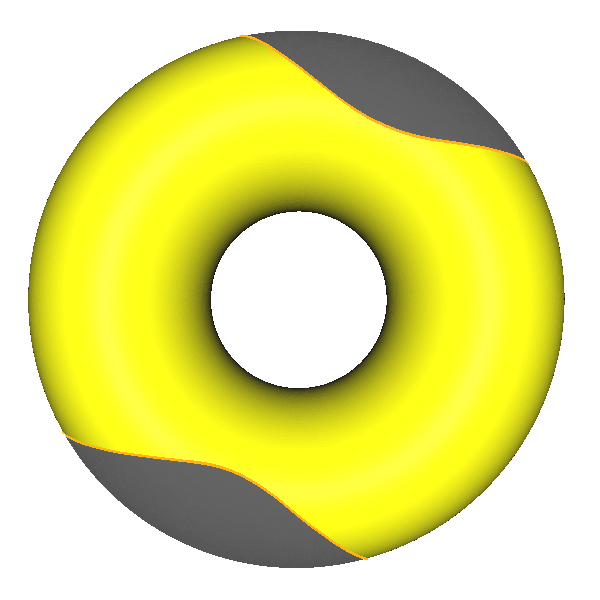}

    \includegraphics[width=0.24\textwidth]{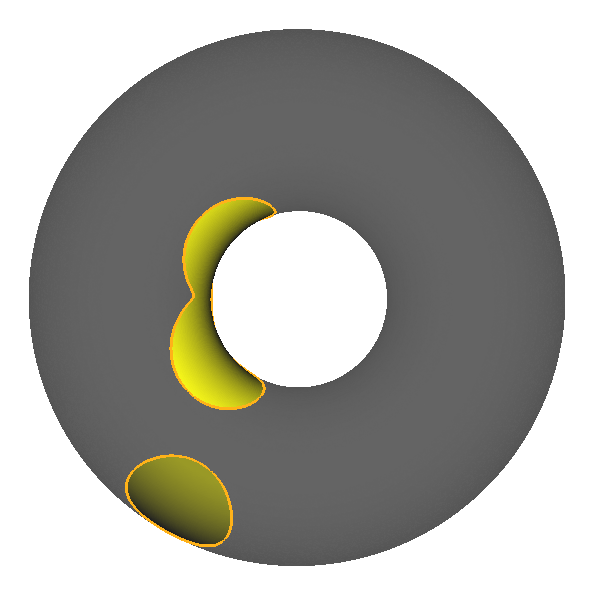}
    \includegraphics[width=0.24\textwidth]{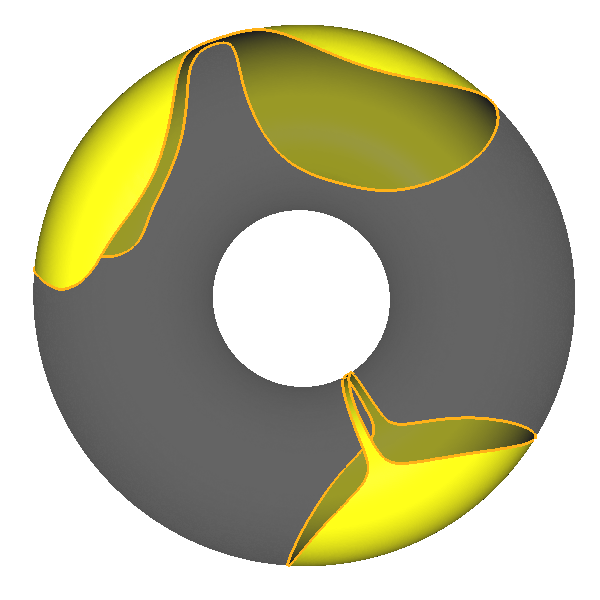}
    \includegraphics[width=0.24\textwidth]{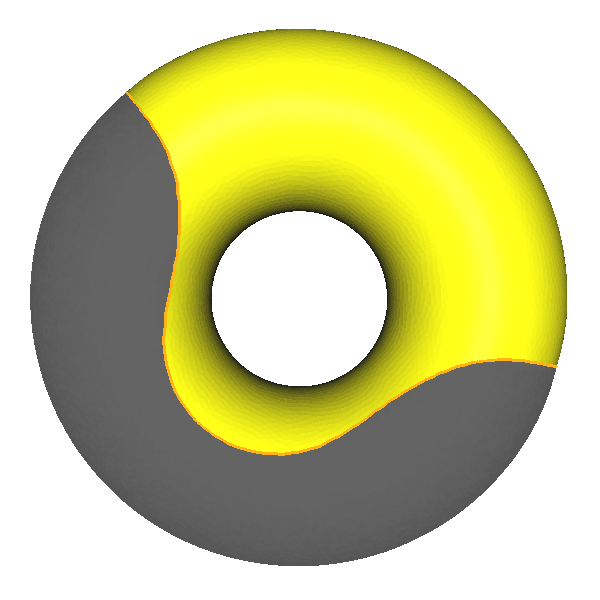}
    \includegraphics[width=0.24\textwidth]{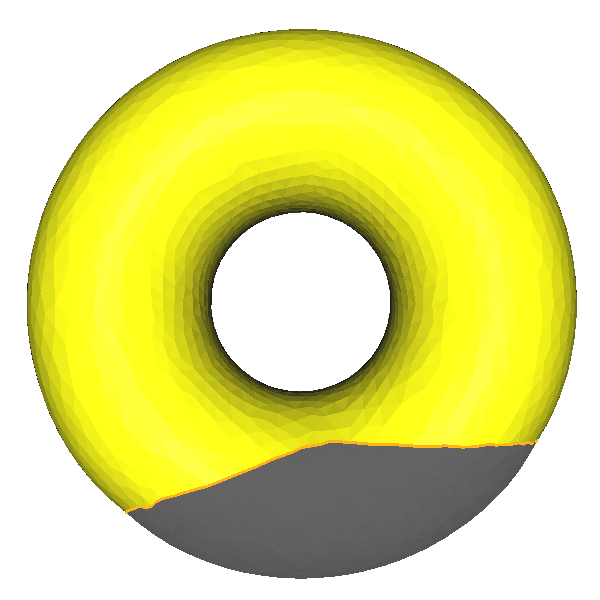}

    \caption{Example of optimal domains for $\mu_1, \mu_2$ and $\mu_3$ (resp. first, second and third row) for $m \in \{4, 22, 42, 60\}$ approximately.}
    \label{fig:tore_mu_ls_examples}
\end{figure}

The optimal eigenvalues plotted as functions of $m$ are shown in Figure \ref{fig:tore_mu_ls}.

\begin{figure}
    \centering
    \includegraphics[width=0.4\textwidth]{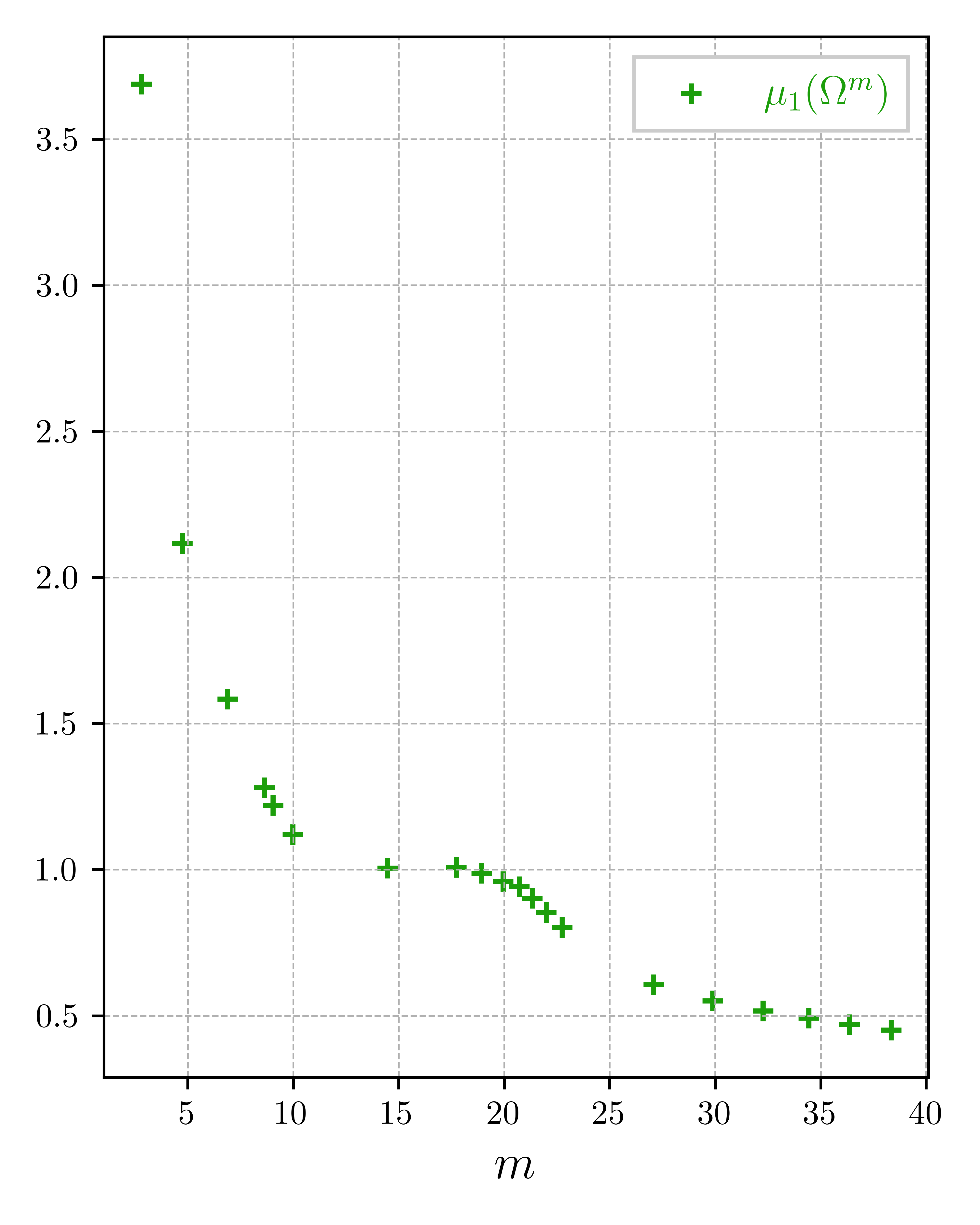}
    \includegraphics[width=0.4\textwidth]{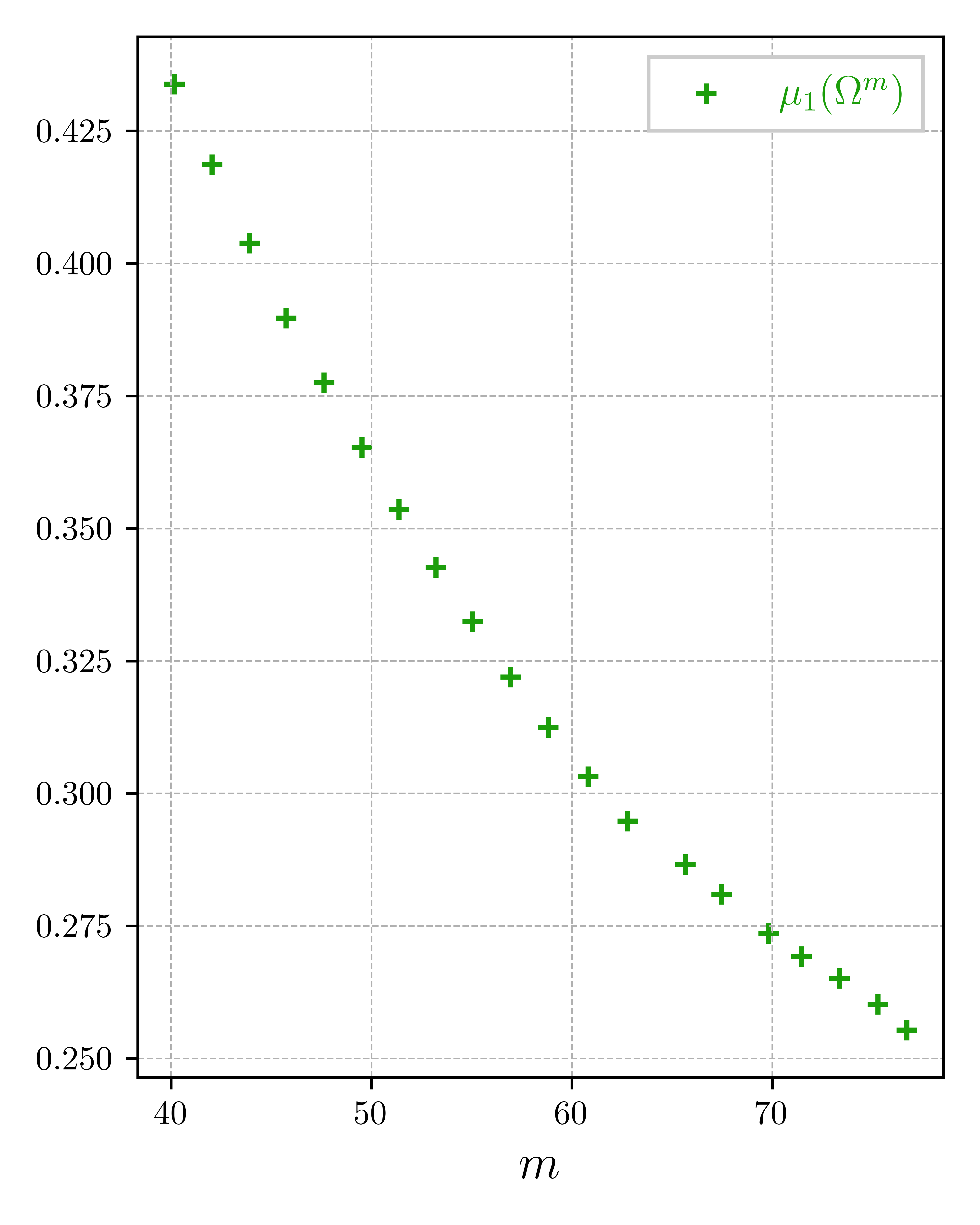}
    \includegraphics[width=0.4\textwidth]{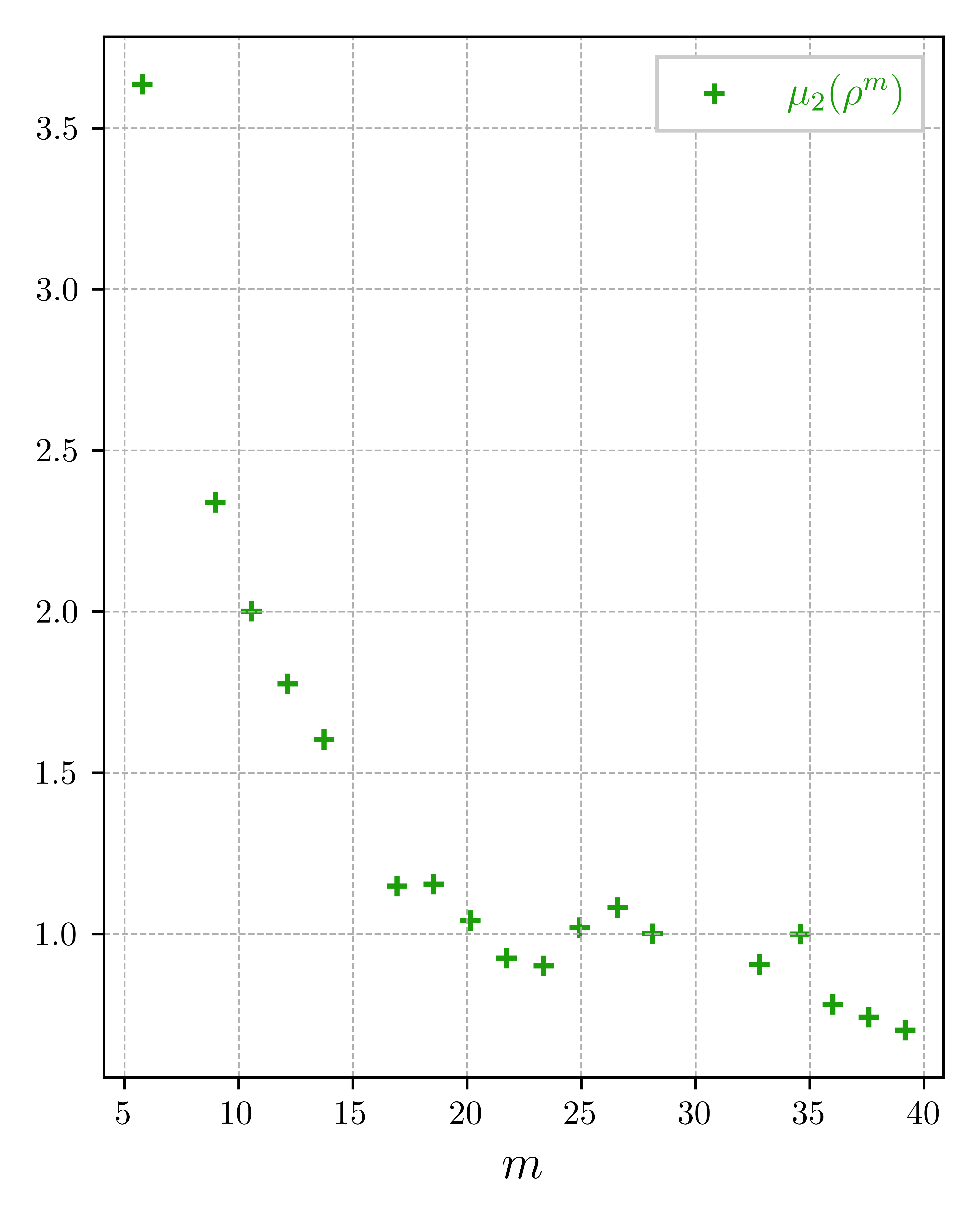}
    \includegraphics[width=0.4\textwidth]{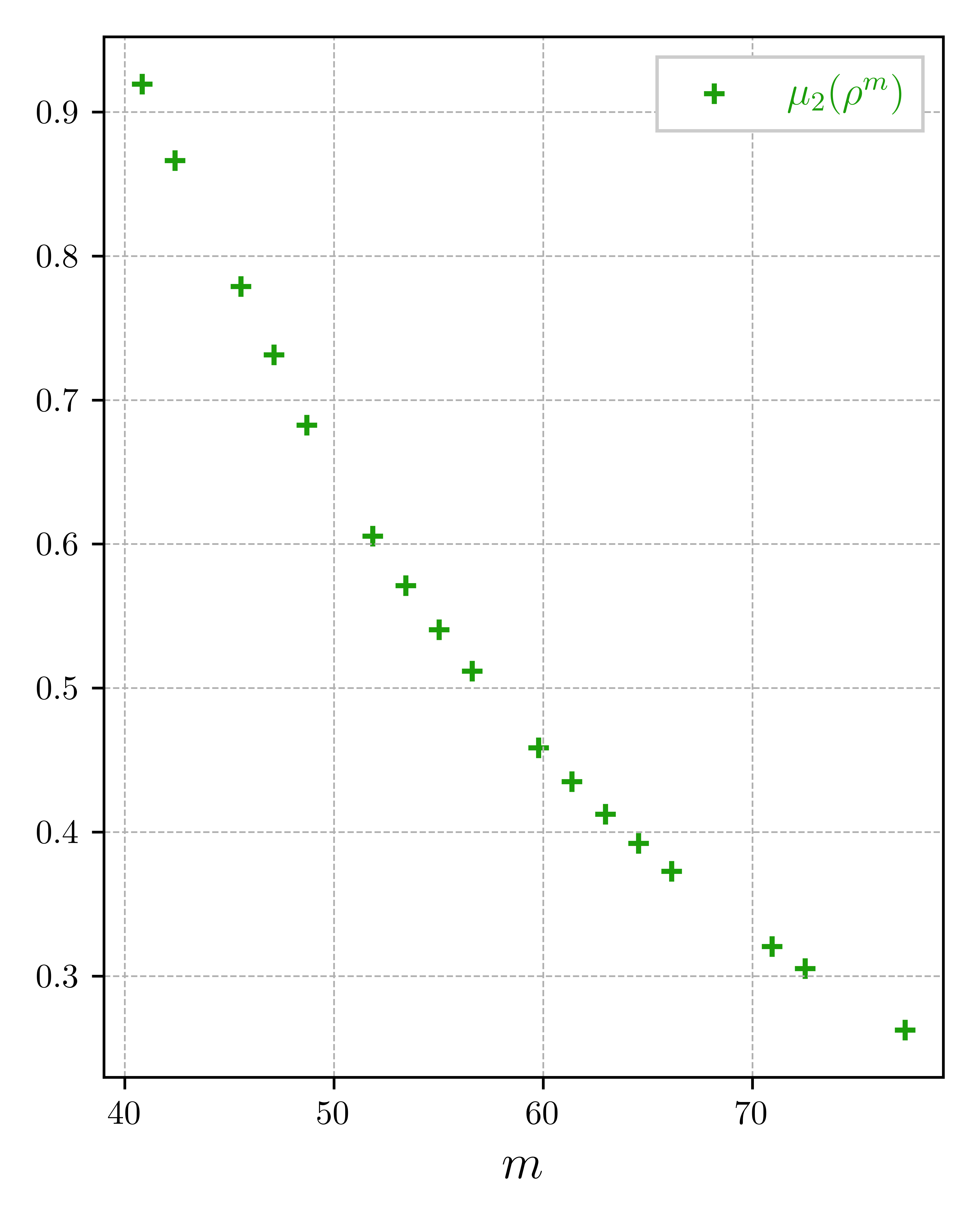}
    \includegraphics[width=0.4\textwidth]{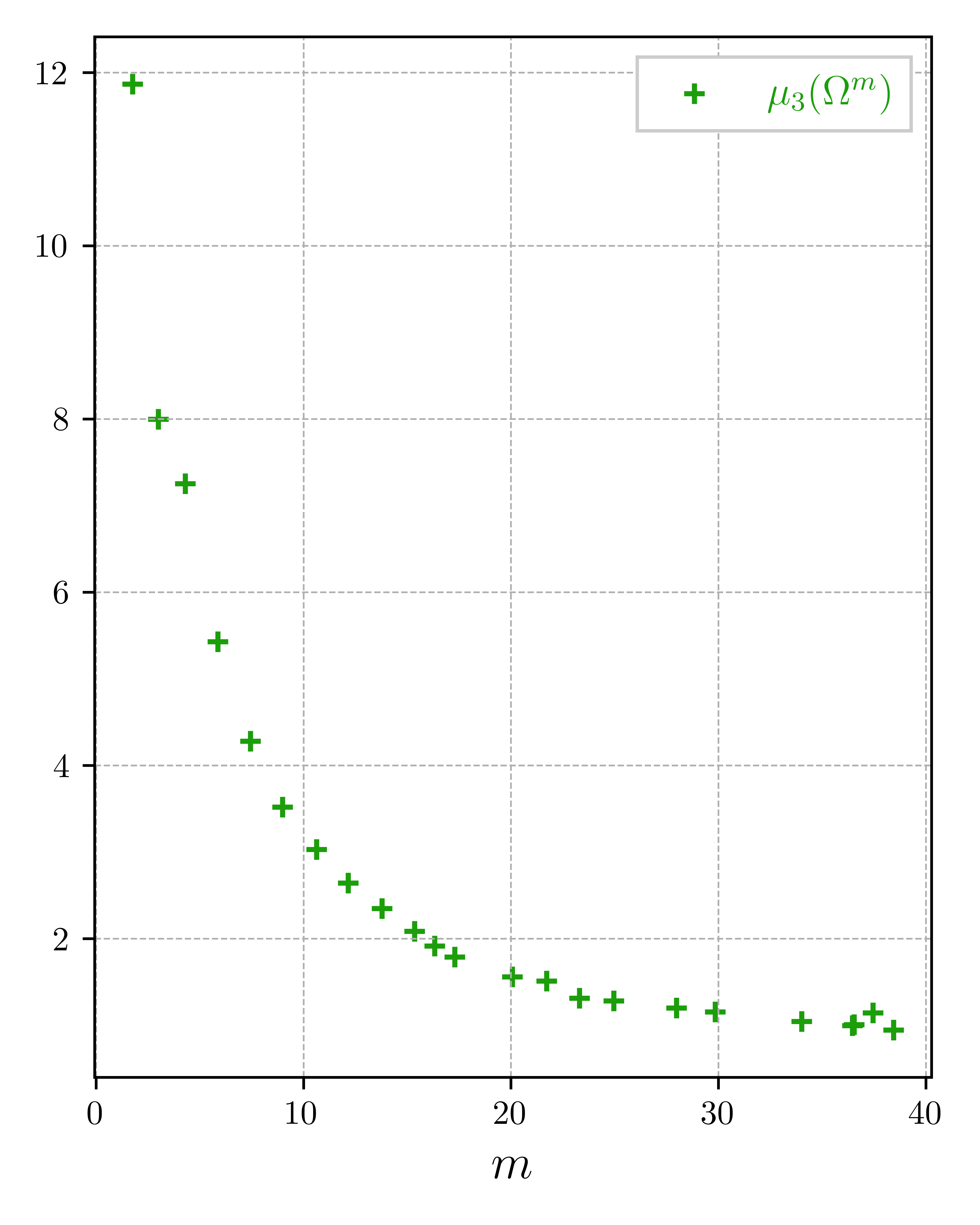}
    \includegraphics[width=0.4\textwidth]{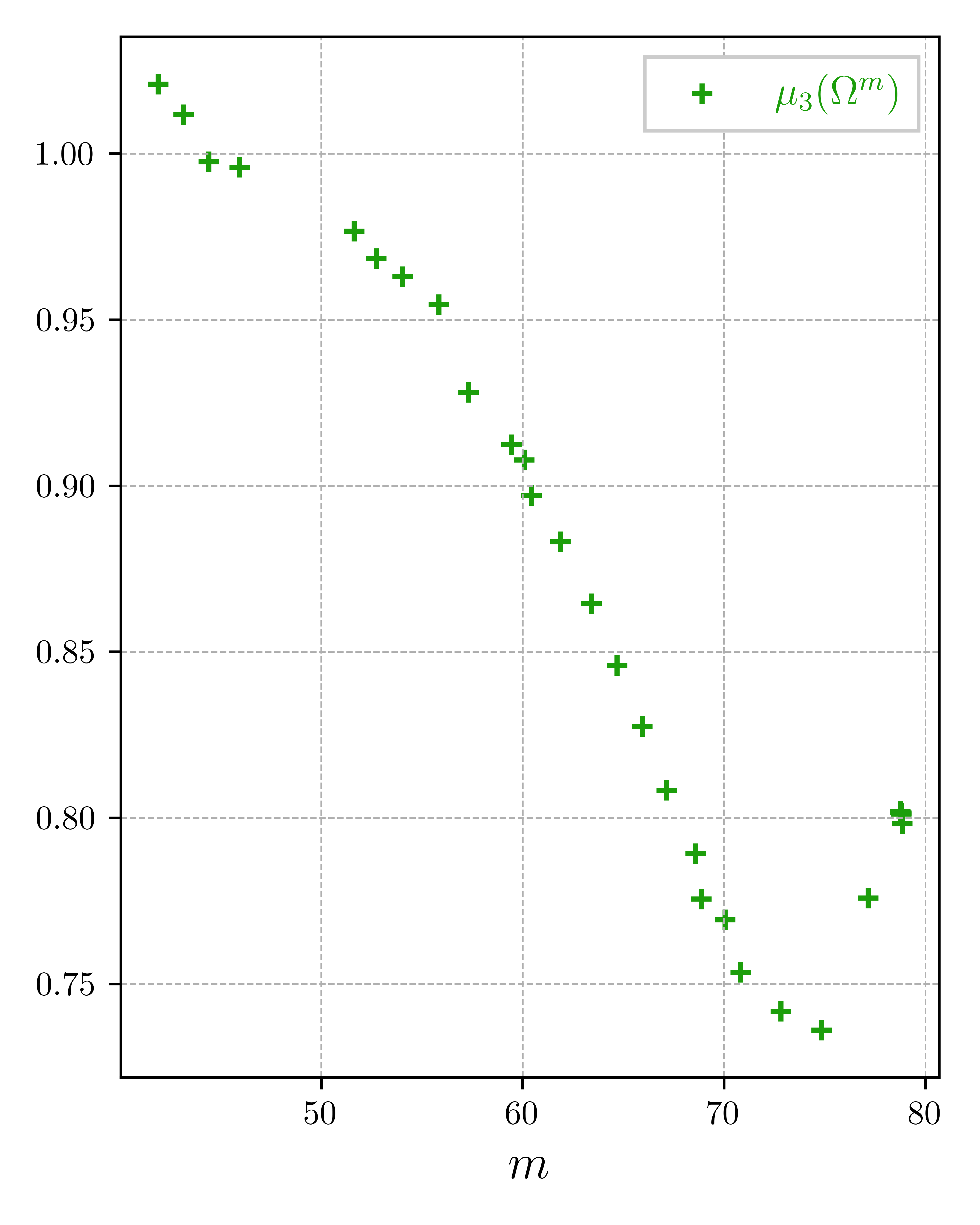}
    \caption{Optimal values of $\mu_1, \mu_2$ and $\mu_3$ (resp. first, second and third row) obtained by the level set method.}
    \label{fig:tore_mu_ls}
\end{figure}

\section{Discussion}

We have presented two ways to explore numerically the optimization of eigenvalues of the Laplace-Beltrami operator
with Neumann boudndary conditions for domains in the sphere and the torus. The first way generalizes the notion
of eigenvalues of domains and thus is independant of topological consideration. On the other hand, the second
method relies on the level set representation of the domain, which allows topological changes.

In the case of the optimization on the sphere,  this flexibility turned out to be an important feature, regarding the topological complexity of certain optimal domains for $\mu_1$ on the sphere. Indeed, while the density optimization leads to optima that does not corresponds to domains, the level set procedure tries to create areas with a lot of holes, which might indicate non-existence of optimal domains for large enough surface area. Oppositely, it seems clear that for small enough surface area, the optimal density (and thus, optimal domain) for $\mu_1$ on the sphere is the one of a geodesic ball.
It has then been witnessed that the behaviour of $\mu_2$ on the sphere was completely clear. Indeed, the optimal domains are always the union of two geodesic balls as it have been shown in \cite{bucur_sharp_2022}.
For $\mu_3$ , it might be difficult to decribe theoretically the way optima acts depending on the total surface area but it would be interesting to prove some necessary conditions such domains have to meet, such as symmetry.

In the case of the optimization on the torus, we noticed that while homogenization happened this time for $\mu2$ and not for $\mu_1$. Further investigations may be needed to understand better how this phenomenon evolves with the size of the torus, and if it such homogenization happens in flat tori like $(\R^n/\Z^n)$.

In any case, an interesting result would be to get a better grasp on the existence or non-existence of optimal domains on manifold.

\section{Acknowledgement}

The author was supported by the ANR SHAPO (ANR-18-CE40-0013).

\clearpage
\bibliographystyle{plain}
\bibliography{references}

\begin{thebibliography}{10}

\bibitem{allaire_shape_2014}
G.~Allaire, C.~Dapogny, and P.~Frey.
\newblock Shape optimization with a level set based mesh evolution method.
\newblock {\em Computer Methods in Applied Mechanics and Engineering},
  282:22--53, December 2014.

\bibitem{allaire2014multi}
Gr{\'e}goire Allaire, Charles Dapogny, Gabriel Delgado, and Georgios
  Michailidis.
\newblock Multi-phase structural optimization via a level set method.
\newblock {\em ESAIM: control, optimisation and calculus of variations},
  20(2):576--611, 2014.

\bibitem{allaire_level-set_2005}
Grégoire Allaire and François Jouve.
\newblock A level-set method for vibration and multiple loads structural
  optimization.
\newblock {\em Computer Methods in Applied Mechanics and Engineering},
  194(30-33):3269--3290, August 2005.

\bibitem{antunes_oudet}
P.~R.~S. Antunes and E.~Oudet.
\newblock Numerical results for extremal problem for eigenvalues of the
  {Laplacian}.
\newblock In {\em Shape optimization and spectral theory}, pages 398--411. De
  Gruyter Open, Warsaw, 2017.

\bibitem{antunes_numerical_2017}
Pedro R.~S. Antunes and Édouard Oudet.
\newblock Numerical {Minimization} of {Dirichlet} {Laplacian} {Eigenvalues} of
  {Four}-{Dimensional} {Geometries}.
\newblock {\em SIAM Journal on Scientific Computing}, 39(3):B508--B521, January
  2017.

\bibitem{antunes2012numerical}
Pedro~RS Antunes and Pedro Freitas.
\newblock Numerical optimization of low eigenvalues of the dirichlet and
  neumann laplacians.
\newblock {\em Journal of Optimization Theory and Applications},
  154(1):235--257, 2012.

\bibitem{ashbaugh_sharp_1995}
Mark~S. Ashbaugh and Rafael~D. Benguria.
\newblock Sharp {Upper} {Bound} to the {First} {Nonzero} {Neumann} {Eigenvalue}
  for {Bounded} {Domains} in {Spaces} of {Constant} {Curvature}.
\newblock {\em Journal of the London Mathematical Society}, 52(2):402--416,
  October 1995.

\bibitem{bandle_isoperimetric_1972}
Catherine Bandle.
\newblock Isoperimetric {Inequality} for {Some} {Eigenvalues} of an
  {Inhomogeneous}, {Free} {Membrane}.
\newblock {\em SIAM Journal on Applied Mathematics}, 22(2):142--147, March
  1972.

\bibitem{bucur_sharp_2022}
Dorin Bucur, Eloi Martinet, and Mickaël Nahon.
\newblock Sharp inequalities for {Neumann} eigenvalues on the sphere, August
  2022.
\newblock arXiv:2208.11413.

\bibitem{BMO_2022}
Dorin Bucur, Eloi Martinet, and Edouard Oudet.
\newblock Maximization of {Neumann} eigenvalues, 2022.

\bibitem{colbois_spectrum_2019}
Bruno Colbois and Ahmad El~Soufi.
\newblock Spectrum of the {Laplacian} with weights.
\newblock {\em Annals of Global Analysis and Geometry}, 55(2):149--180, 2019.

\bibitem{dapogny2014three}
Charles Dapogny, C{\'e}cile Dobrzynski, and Pascal Frey.
\newblock Three-dimensional adaptive domain remeshing, implicit domain meshing,
  and applications to free and moving boundary problems.
\newblock {\em Journal of computational physics}, 262:358--378, 2014.

\bibitem{dapogny2012computation}
Charles Dapogny and Pascal Frey.
\newblock Computation of the signed distance function to a discrete contour on
  adapted triangulation.
\newblock {\em Calcolo}, 49(3):193--219, 2012.

\bibitem{de_gournay_velocity_2006}
Frédéric de~Gournay.
\newblock Velocity {Extension} for the {Level}-set {Method} and {Multiple}
  {Eigenvalues} in {Shape} {Optimization}.
\newblock {\em SIAM Journal on Control and Optimization}, 45(1):343--367,
  January 2006.

\bibitem{MR3043640}
F.~Hecht.
\newblock New development in freefem++.
\newblock {\em J. Numer. Math.}, 20(3-4):251--265, 2012.

\bibitem{kato2013perturbation}
Tosio Kato.
\newblock {\em Perturbation theory for linear operators}, volume 132.
\newblock Springer Science \& Business Media, 2013.

\bibitem{langford2022maximizers}
Jeffrey~J Langford and Richard~S Laugesen.
\newblock Maximizers beyond the hemisphere for the second neumann eigenvalue.
\newblock {\em Mathematische Annalen}, pages 1--27, 2022.

\bibitem{MR4199501}
Yves Renard and Konstantinos Poulios.
\newblock Get{FEM}: automated {FE} modeling of multiphysics problems based on a
  generic weak form language.
\newblock {\em ACM Trans. Math. Software}, 47(1):Art. 4, 31, 2021.

\bibitem{strichartz_estimates_1996}
Robert~S. Strichartz.
\newblock Estimates for {Sums} of {Eigenvalues} for {Domains} in {Homogeneous}
  {Spaces}.
\newblock {\em Journal of Functional Analysis}, 137(1):152--190, April 1996.

\bibitem{wachter2006implementation}
Andreas W{\"a}chter and Lorenz~T Biegler.
\newblock On the implementation of an interior-point filter line-search
  algorithm for large-scale nonlinear programming.
\newblock {\em Mathematical programming}, 106(1):25--57, 2006.

\bibitem{weinberger_isoperimetric_1956}
H.~F. Weinberger.
\newblock An isoperimetric inequality for the \${N}\$-dimensional free membrane
  problem.
\newblock {\em J. Rational Mech. Anal.}, 5:633--636, 1956.

\bibitem{ZANGER200139}
D.Z. Zanger.
\newblock Eigenvalue variations for the neumann problem.
\newblock {\em Applied Mathematics Letters}, 14(1):39--43, 2001.

\end{thebibliography}

\end{document}